\newcommand{\calT}{\mathcal{T}}
\newcommand{\T}{\calT}
\newcommand{\M}{\mathcal{M}}
\newcommand{\sM}{\mathcal{M}}
\newcommand{\R}{\mathbb{R}}
\newcommand{\Z}{\mathbb{Z}}
\renewcommand{\H}{\mathbb{H}}
\newcommand{\wep}{Weil-Petersson}
\newcommand{\dvol}{d\mathit{vol}}
\newcommand{\E}{\mathbb{E}_{\rm WP}^g}
\newcommand{\ve}{\boldsymbol}
\newcommand{\eg}{\textit{e.g.\@ }}
\def\arcsinh{\mathop{\rm arcsinh}}
\def\Vol{\mathop{\rm Vol}}
\DeclareMathOperator{\Prob}{Prob_{WP}^{g}}
\DeclareMathOperator{\WP}{WP}
\def\Mod{\mathop{\rm Mod}}
\theoremstyle{plain}
\newtheorem{theorem}{Theorem}
\newtheorem{proposition}[theorem]{Proposition}
\newtheorem{lemma}[theorem]{Lemma}
\newtheorem{remark}[theorem]{Remark}
\newtheorem{conjecture}[theorem]{Conjecture}
\newcommand{\be}{\begin{equation}}
\newcommand{\ene}{\end{equation}}
\newcommand{\br}{\begin{remark}}
\newcommand{\er}{\end{remark}}
\newcommand{\bl}{\begin{lem}}
\newcommand{\el}{\end{lem}}
\newcommand{\bcor}{\begin{cor}}
\newcommand{\ecor}{\end{cor}}
\newcommand{\bpro}{\begin{pro}}
\newcommand{\epro}{\end{pro}}
\newcommand{\ben}{\begin{enumerate}}
\newcommand{\een}{\end{enumerate}}
\newcommand{\bp}{\begin{proof}}
\newcommand{\ep}{\end{proof}}
\newcommand{\bpo}{\begin{pro}}
\newcommand{\epo}{\end{pro}}
\newcommand{\beq}{\begin{equation*}}
\newcommand{\eeq}{\end{equation*}}
\newcommand{\bear}{\begin{eqnarray}}
\newcommand{\eear}{\end{eqnarray}}
\newcommand{\beqar}{\begin{eqnarray*}}
\newcommand{\eeqar}{\end{eqnarray*}}
\newcommand{\bt}{\begin{theorem}}
\newcommand{\et}{\end{theorem}}
\newcommand{\bex}{\begin{excer}}
\newcommand{\eex}{\end{excer}}
\theoremstyle{definition}
\theoremstyle{remark}
\newtheorem*{con*}{Construction}
\newtheorem*{rem*}{Remark}
\newtheorem*{exam*}{Example}
\newtheorem*{exams*}{Examples}
\newtheorem*{thm*}{\bf Theorem}
\newtheorem*{que*}{Question}
\newtheorem{que}{Question}
\newtheorem*{Def*}{Definition}
\newtheorem*{Cons*}{Construction}
\newtheorem*{Lem*}{Lemma}
\newtheorem*{Conj*}{\bf Conjecture}
\begin{document}
\title[Arbitrarily small spectral gaps]{Arbitrarily small spectral gaps for random hyperbolic surfaces with many cusps}

\author{Yang Shen}
\address{School of Mathematical Sciences,
Fudan University, Shanghai, China}
\email[(Y.~S.)]{shenwang@fudan.edu.cn}

\author{Yunhui Wu}
\address{Department of Mathematical Sciences and Yau Mathematical Sciences Center, Tsinghua University, Beijing, China}
\email[(Y.~W.)]{yunhui\_wu@tsinghua.edu.cn}

\maketitle

\begin{abstract}
Let $\sM_{g,n(g)}$ be the moduli space of hyperbolic surfaces of genus $g$ with $n(g)$ punctures endowed with the Weil-Petersson metric. In this paper we study the asymptotic behavior of the Cheeger constants and spectral gaps of random hyperbolic surfaces in $\sM_{g,n(g)}$, when $n(g)$ grows slower than $g$ as $g\to \infty$.
\end{abstract}

\section{Introduction}
For a complete non-compact hyperbolic surface $X$ of finite area, the spectrum of the Laplacian operator of $X$ on $L^2(X)$ consists of absolutely continuous spectrum $[\frac{1}{4},\infty)$ and possibly discrete eigenvalues in $[0,\infty)$ (see \eg \cite{LP82}). Although the spectral theory of hyperbolic surfaces has been well studied for decades, the following fundamental question still remains open \cite{Sar03}.
\begin{que}\label{in-q-1}
Does a complete non-compact hyperbolic surface of finite area always have a non-zero eigenvalue?
\end{que}
\noindent Selberg's trace formula \cite{Sel65} shows that certain arithmetic non-compact hyperbolic surfaces have infinitely many eigenvalues in $[\frac{1}{4},\infty)$. But Phillips and Sarnak \cite{PS85} showed that certain eigenvalues disappear when some arithmetic hyperbolic surface is deformed to an ordinary hyperbolic surface. And the \emph{Phillips-Sarnak Conjecture} (\eg \cite{DIPS85, Sar03}) asserts that a generic non-compact hyperbolic surface of finite area except once-punctured tori has only finite many discrete eigenvalues. One may see \cite{Col83,HJu18,Jud95,Luo01,PS92,Wol94} and references therein for related works.

Let $N\geq 1$ and $\Gamma(N)$ be the principal congruence subgroup of $\mathrm{SL(2,\mathbb{Z})}$ of level $N$ defined as $\Gamma(N)\overset{\text{def}}{=}\left\{A \in \mathrm{SL(2,\mathbb{Z})}; \ A \equiv 1 \ \mathrm{mod} \ N \right\}$. Set $X(N)=\H/ \Gamma(N)$. It is known that $X(N)$ is a hyperbolic surface of genus $g(N)$ with $n(N)$ punctures where as $N\to \infty$, $g(N)$ and $n(N)$ roughly grow like $N^3$ and $N^2$ respectively (see \eg \cite[Theorem 2.12]{Ber-book}). So $n=n(g)\asymp g^{\frac{2}{3}}$ for $X(N)$. Let $\lambda_1(X(N))$ be the first non-zero eigenvalue of $X(N)$. A celebrated theorem of Selberg \cite{Sel65} says that $\lambda_1(X(N))\geq \frac{3}{16}$ for all $N\geq 1$. And \emph{Selberg's $\frac{1}{4}$ Conjecture} asserts that the lower bound $\frac{3}{16}$ can be improved to be $\frac{1}{4}$. The best known lower bound is $\frac{975}{4096}$, due to Kim-Sarnak \cite{KS03}. One may also see \eg \cite{GJ78, Iwa89, Iwa96,KS02, LRS95, Sar95} for intermediate results.

Mysteriously, the number $\frac{3}{16}$ also appears as lower bounds for uniform spectral gaps of random hyperbolic surfaces in different types of models. Let $\sM_{g,n}$ be the moduli space of hyperbolic surfaces of genus $g$ with $n$ punctures endowed with the \wep \ metric. Joint with Xue, the second named author \cite{WX22} showed that for any $\epsilon>0$, as $g\to \infty$, a generic surface $X\in \sM_{g,0}$ has first eigenvalue greater than $\frac{3}{16}-\epsilon$, improving Mirzakhani's prior lower bound $0.0024$ \cite{Mirz13}. One may also see Lipnowski-Wright \cite{LW21} for an independent proof. Very recently it has been improved to be $\frac{2}{9}-\epsilon$ by Anantharaman-Monk in \cite{AM23}. For random cusped surfaces, Hide \cite{Hide21} showed that if $n=O(g^\alpha)$ for some $0\leq\alpha<\frac{1}{2}$, as $g\to \infty$, a generic surface $X\in \sM_{g,n}$ has a spectral gap greater than $C(\alpha)-\epsilon$ for any $\epsilon>0$, where $C(\alpha)$ is a positive constant depending on $\alpha$ such that
$$C(0)=\frac{3}{16}\text{\ and }\lim\limits_{\alpha\to \frac{1}{2}}C(\alpha)=0.$$
For covering model, Magee-Naud-Puder \cite{MNP20} showed that as the covering degree tends to infinity, a generic covering surface of a fixed closed hyperbolic surface has relative spectral gap of size $\frac{3}{16}-\epsilon$, which was improved to $\frac{1}{4}-\epsilon$ by Hide-Magee \cite{HM21} for random covers of cusped hyperbolic surfaces. As an important application, Hide and Magee firstly proved the existence of a sequence of closed hyperbolic surfaces with genus going to infinity and first eigenvalues tending to $\frac{1}{4}$. Analogous to Selberg's $\frac{1}{4}$ Conjecture, it is \emph{conjectured} that the lower bounds $\frac{3}{16}-\epsilon$ in \cite{MNP20, WX22, LW21} can be replaced by $\frac{1}{4}-\epsilon$.

It is quite natural to ask (see \eg \cite[Question 1.7]{Hide21} for case that $n \sim g^{\frac{2}{3}}$):
\begin{que}\label{in-q-2}
If $n(g)$ grows significantly faster than $\sqrt g$, asymptotically does a generic surface in $\sM_{g,n(g)}$ have a uniform positive spectral gap as $g\to \infty$?
\end{que}

For a non-compact hyperbolic surface $X$ of finite area, we write $\lambda_1(X)$ as the first non-zero eigenvalue of the Laplacian operator of $X$ \emph{if it exists}. Let $\Prob$ be the probability measure on moduli space $\sM_{g,n(g)}$ given by the Weil-Petersson metric. We prove

\begin{theorem}\label{mt-1}
If $n(g)$ satisfies that $$\lim\limits_{g\to\infty}\frac{n(g)}{\sqrt g}=\infty\text{ and }\lim\limits_{g\to\infty}\frac{n(g)}{g}=0,$$ then for any $\epsilon>0$
$$\lim\limits_{g\to\infty} \Prob\left(X\in\mathcal{M}_{g,n(g)};\ \lambda_1(X)<\epsilon\right)=1.$$
\end{theorem}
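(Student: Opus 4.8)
The plan is to show that, with probability tending to $1$, a random $X\in\sM_{g,n(g)}$ carries a short simple closed geodesic cutting off a small subsurface, and then to convert such a geodesic into a test function whose Rayleigh quotient lies below the bottom $\tfrac14$ of the essential spectrum. \textbf{Step 1 (from a short separating geodesic to a small eigenvalue).} Suppose $X$ has a simple closed geodesic $\gamma$ of length $\ell\le L$ bounding an embedded pair of pants $P$ whose other two ends are cusps of $X$; then $\area(P)=2\pi$ while $\area(X\setminus P)=2\pi(2g-2+n)-2\pi\to\infty$. Define $f$ to equal a constant $a$ on $P$, a constant $-b$ on $X\setminus P$ outside the collar of $\gamma$, and to interpolate across the collar, with $a\,\area(P)=b\,\area(X\setminus P)$ so that $f\perp 1$; then $f\in L^2$ and is Lipschitz. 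Working in Fermi coordinates on the collar and optimizing the transition profile (the Euler--Lagrange solution $f'\propto\operatorname{sech}\rho$) gives $\int_X|\nabla f|^2\lesssim \ell\,(a+b)^2$, while $\int_X f^2=(1+o(1))\,2\pi a^2$, so the Rayleigh quotient satisfies $R(f)\lesssim \ell$ uniformly in $g$. Fix $L=L(\epsilon)$ small enough that $R(f)<\min(\epsilon,\tfrac14)$ for every such $\gamma$. Since the essential spectrum of a finite-area cusped hyperbolic surface is $[\tfrac14,\infty)$ and restriction to $\{1\}^\perp$ does not change it, the min-max principle forces a discrete eigenvalue in $(0,\tfrac14)$ below $R(f)$; hence $\lambda_1(X)$ exists and $\lambda_1(X)<\epsilon$. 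It therefore suffices to prove that such a $\gamma$ exists with probability $\to 1$.

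\textbf{Step 2 (first moment).} Let $N=N_L(X)$ count simple closed geodesics of length $\le L$ bounding a two-cusped pair of pants. Each topological type is the unordered pair of enclosed cusps, so there are $\binom{n}{2}$ types, and Mirzakhani's integration formula gives
$$\E[N]=\binom{n}{2}\,\frac{1}{V_{g,n}}\int_0^L \ell\,V_{0,3}(\ell,0,0)\,V_{g,n-1}(\ell,0,\dots,0)\,d\ell ,$$
with $V_{0,3}\equiv 1$. As $V_{g,n-1}(\ell,0,\dots,0)$ is a polynomial in $\ell^2$ with constant term $V_{g,n-1}$, the integral equals $(1+o(1))\tfrac{L^2}{2}V_{g,n-1}$. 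Using the Mirzakhani--Zograf-type ratio $V_{g,n-1}/V_{g,n}\asymp 1/g$, valid in the regime $n=o(g)$, one obtains $\E[N]\asymp n^2L^2/g$. Since $n/\sqrt g\to\infty$ forces $n^2/g\to\infty$, we conclude $\E[N]\to\infty$ for our fixed $L$.

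\textbf{Step 3 (second moment and conclusion).} Divergence of the mean is upgraded to $\Prob(N\ge 1)\to 1$ by the second moment method. Mirzakhani's formula for pairs of disjoint curves shows that $\E[N(N-1)]$ is dominated by pairs of disjoint two-cusped pants enclosing four distinct cusps, with combinatorial factor $\sim\binom{n}{2}\binom{n-2}{2}$ and complementary volume $V_{g,n-2}(\ell_1,\ell_2,0,\dots,0)$; the same volume-ratio estimates yield the near-factorization $\E[N(N-1)]=(1+o(1))\,\E[N]^2$. Hence $\mathrm{Var}(N)=o(\E[N]^2)$, and Chebyshev's inequality gives $\Prob(N=0)\le \mathrm{Var}(N)/\E[N]^2\to 0$. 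Combined with Step 1, $\Prob\!\left(\lambda_1(X)<\epsilon\right)\ge \Prob(N\ge 1)\to 1$.

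\textbf{Main obstacle.} The crux is the uniform volume control underlying both moments: one needs $V_{g,n-1}/V_{g,n}\asymp 1/g$ together with $V_{g,n-2}(\ell_1,\ell_2)/V_{g,n}\approx (V_{g,n-1}/V_{g,n})^2$ to hold as $g\to\infty$ with $\sqrt g\ll n\ll g$. This is exactly where the hypothesis $n/g\to 0$ is used: it validates the Mirzakhani--Zograf asymptotics in this range and, moreover, keeps the two-cusped pair of pants the dominant separating configuration, since cutting off $k$ cusps contributes $\asymp n(n/g)^{k-1}$, which is maximized at $k=2$ precisely when $n=o(g)$. Controlling the non-disjoint and fewer-cusp contributions to $\E[N^2]$, and checking that the polynomial coefficients of $V_{g,n-1}(\ell,\dots)$ do not distort the small-$\ell$ integral, are the remaining technical points.
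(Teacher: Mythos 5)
Your proposal follows the same route as the paper: count simple closed geodesics of length at most $L$ cutting off a two-cusped pair of pants (combinatorial factor $\binom{n}{2}$, resp.\ $\binom{n}{2}\binom{n-2}{2}$ for disjoint pairs, exactly as in Remark \ref{r-2}), compute first and second moments by Mirzakhani's integration formula, use $\frac{V_{g,n-1}^2}{V_{g,n}V_{g,n-2}}\to 1$ from the Mirzakhani--Zograf estimate in the regime $n=o(g)$, and convert the resulting short curve into a Rayleigh quotient below $\tfrac14$ so that $\lambda_1$ exists and is $<\epsilon$. Your explicit collar test function replaces the paper's combination of the Cheeger bound $h(X)\le \ell(\gamma)/2\pi$ with Buser's inequality (Theorem \ref{t-Bu}) and the existence criterion (Theorem \ref{exist}); that difference is cosmetic.

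The one substantive weak point is your choice of a \emph{fixed} cutoff $L=L(\epsilon)$. Step 3 needs $\mathbb{E}[N(N-1)]=(1+o(1))\,\mathbb{E}[N]^2$, and your justification --- that $V_{g,n-1}(\ell,0,\dots,0)$ is a polynomial in $\ell^2$ with constant term $V_{g,n-1}$ --- does not give this: at fixed $\ell>0$ the higher coefficients contribute a genuine multiplicative distortion (of size up to $\sinh(\ell/2)/(\ell/2)>1$), not $o(1)$. These factors would cancel in the ratio of moments if one had the sinh-expansion of $V_{g,n-k}(\boldsymbol{x})/V_{g,n-k}$ with $(1+o(1))$ precision, but the paper proves that expansion (Proposition \ref{p-vol-1}) only for $n\prec\sqrt g$, i.e.\ outside the regime $\sqrt g\ll n\ll g$ of Theorem \ref{mt-1}. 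With only the crude bounds $V_{g,n-k}\le V_{g,n-k}(\boldsymbol{x})\le e^{(x_1+\cdots+x_k)/2}V_{g,n-k}$ that are available there, a fixed $L$ yields merely $\mathbb{E}[N(N-1)]\le(1+O(L))\,\mathbb{E}[N]^2$, hence $\Prob\left(N\ge 1\right)\ge 1-O(L)-o(1)$, which does not give limit $1$ as stated. The repair is exactly the paper's device: take the shrinking cutoff $L(g)=\left(\sqrt g/n(g)\right)^{1/2}\to 0$, so that $\mathbb{E}[N]\asymp n(g)/\sqrt g\to\infty$ still, while all volume corrections become $O(L(g))=o(1)$ (Lemma \ref{l-exp-s} and Proposition \ref{ph-1}); alternatively, keep $L$ fixed but let $L\downarrow 0$ after $g\to\infty$, since $1-O(L)$ for every fixed $L$ already forces the limit to equal $1$. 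With that adjustment your outline is a correct rendering of the paper's proof.
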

\noindent Theorem \ref{mt-1} partially answers Question \ref{in-q-2}. As a special case, it gives a complete answer to \cite[Question 1.7]{Hide21} by Hide. It also tells that for large $g$, the answer to Question \ref{in-q-1} is affirmative for asymptotically generic surfaces in $\sM_{g,n(g)}$ provided that $g^{\frac{1}{2}+\delta}\prec n(g)\prec g^{1-\delta}$ for some $\delta>0$; moreover, the first eigenvalues could be arbitrarily small. For congruence covering surfaces $\{X(N)\}$, Theorem \ref{mt-1} tells that they are exceptional for first eigenvalues in the moduli space of Riemann surfaces homeomorphic to $X(N)$.

\begin{rem*}
It would be \emph{interesting} to know whether the condition that $$\lim\limits_{g\to\infty}\frac{n(g)}{g}=0$$ can be removed in Theorem \ref{mt-1}. We expect a \emph{positive} answer, which will completely solve Question \ref{in-q-2}.  For the case that $\lim\limits_{g\to\infty}\frac{n(g)}{g}=\infty$, by the work \cite[Theorem 2]{Zo84} of Zograf it is known that for $X\in \sM_{g,n(g)}$, $\lambda_1(X) \prec \frac{g}{n(g)}$ which in particular implies that
\[\lim\limits_{g\to \infty}\sup\limits_{X\in \sM_{g,n(g)}}\lambda_1(X)=0.\]
Hence it suffices to consider the case that $g$ and $n(g)$ have the same growth rates.
 We also remark here that Hide-Thomas \cite{HT22} showed that as $n\to \infty$, if  $i(n)=o(n^{\frac{1}{3}})$, the $i(n)$-th eigenvalue of a generic surface in $\sM_{0,n}$ is arbitrarily small.
\end{rem*}

\noindent Let $V_{g,n}$ be the Weil-Petersson volume of $\M_{g,n}$. In the proof of Theorem \ref{mt-1}, the geometric quantity $\frac{V_{g,n}^2}{V_{g,n-1}V_{g,n+1}}$ shows up (see Proposition \ref{ph-1}). And the following question on this quantity is naturally raised:
\begin{que}\label{que-wpv}
Does the following limit hold:
$$\lim\limits_{g+n\to\infty}\frac{V_{g,n}^2}{V_{g,n-1}V_{g,n+1}}=1$$ where $n$ may depend on $g$.
\end{que}

\begin{rem*}
\ben
\item By \cite[Lemma 5.1]{MZ15} of Mirzakhani-Zograf, the answer to Question \ref{que-wpv} is affirmative if $n(g)=o(g)$. One may also see Lemma \ref{l-cor-3}. 

\item If $g$ is fixed, it is known by \cite[Theorem 6.1]{MZ00} of Manin-Zograf that the answer to Question \ref{que-wpv} is also positive. 

\item A positive answer to Question \ref{que-wpv} would imply that the condition that $\lim\limits_{g\to\infty}\frac{n(g)}{g}=0$  in Theorem \ref{mt-1} can be removed. See Sections \ref{sec-proof} and \ref{sec-ques} for details.

\item In Section \ref{sec-ques} we will show in Proposition \ref{p-est} that $\limsup\limits_{g+n\to\infty}\frac{V_{g,n}^2}{V_{g,n-1}V_{g,n+1}}\leq 1$.
\een
\end{rem*}

 As introduced above, the case that $n(g)=o(\sqrt{g})$ was studied by Hide in \cite{Hide21}. Let $h(X)$ be the Cheeger constant of $X\in \sM_{g,n(g)}$. It is known by Cheeger-Buser inequality that $h$ and $\lambda_1$ can bound each other. In this paper, we also study the critical case that $n(g)\asymp \sqrt{g}$, and show that
\begin{theorem}\label{mt-3}
Assume $n(g)$ satisfies that $$\lim\limits_{g\to\infty}\frac{n(g)}{\sqrt g}=a\in (0,\infty).$$ Then for any $0<C<\frac{\log 2}{\sqrt{4\pi(\log 2+\pi)}}$,
$$\lim\limits_{g\to\infty}\Prob\left(X\in\mathcal{M}_{g,n(g)}; \ h(X)\leq \frac{C}{\sqrt{1+C^2}}\right)=1-e^{-\lambda(a,C)}$$
where $$\lambda(a,C)=\frac{a^2}{4\pi^2}\left(\cosh\pi C-1\right).$$
\end{theorem}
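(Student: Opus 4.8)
The plan is to reduce the event $\{h(X)\le \tfrac{C}{\sqrt{1+C^2}}\}$ to the existence of one short simple closed geodesic that encloses exactly two cusps, and then to count such geodesics with Mirzakhani's integration formula. First I would determine the cheapest bottleneck. Let $\gamma$ be a simple closed geodesic of length $\ell$ bounding a pair of pants $P$ whose two other ends are cusps, so $\area(P)=2\pi$ and $\gamma$ separates these two cusps from the remainder $X'$. To isolate the two cusps one should cut not along $\gamma$ but along the equidistant curve (hypercycle) $\sigma_t$ at distance $t$ from $\gamma$ on the $X'$-side, of length $\ell\cosh t$, enclosing area $2\pi+\ell\sinh t$. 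Minimizing the ratio
\[ r(t)=\frac{\ell\cosh t}{2\pi+\ell\sinh t},\qquad \text{optimum at } \sinh t^\ast=\tfrac{\ell}{2\pi},\qquad r(t^\ast)=\frac{\ell}{\sqrt{4\pi^2+\ell^2}}, \]
and noting that $\sigma_{t^\ast}$ has constant geodesic curvature $\tanh t^\ast$ (as forced by the first variation of the isoperimetric ratio together with Gauss-Bonnet, $\area=h\cdot\length+2\pi$ for a region with two cusps and $\chi=-1$), one gets the optimal cut. Since $\frac{\ell}{\sqrt{4\pi^2+\ell^2}}\le \frac{C}{\sqrt{1+C^2}}$ is equivalent to $\ell\le 2\pi C$, a pair of cusps yields a cut of the required size exactly when its bounding geodesic has length $\le 2\pi C$; for the stated range of $C$ one checks $t^\ast$ stays below the collar half-width of $\gamma$, so $\sigma_{t^\ast}$ is embedded and the construction is legitimate.

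Let $N=N_g$ count the simple closed geodesics of length $\le 2\pi C$ bounding a two-cusped pair of pants. By Mirzakhani's integration formula, summing over the $\binom{n(g)}{2}$ cusp pairs and using $V_{0,3}\equiv 1$,
\[ \E[N]=\binom{n(g)}{2}\frac{1}{V_{g,n(g)}}\int_0^{2\pi C}\ell\, V_{g,n(g)-1}(\ell)\, d\ell. \]
I would then insert the large-genus asymptotics $\frac{V_{g,n-1}(\ell)}{V_{g,n-1}}\to \frac{\sinh(\ell/2)}{\ell/2}$ and, from Mirzakhani--Zograf (\cf Proposition \ref{ph-1} and Lemma \ref{l-cor-3}), $\frac{V_{g,n-1}}{V_{g,n}}\sim \frac{1}{8\pi^2 g}$. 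Because $\int_0^{2\pi C}2\sinh(\ell/2)\,d\ell=4(\cosh\pi C-1)$ and $\binom{n(g)}{2}\sim \tfrac{a^2 g}{2}$, this gives $\E[N]\to \frac{a^2}{4\pi^2}(\cosh\pi C-1)=\lambda(a,C)$. Applying the same machinery to $k$ disjoint such configurations, whose complementary Weil--Petersson volumes factorize to leading order, I would obtain $\E[N(N-1)\cdots(N-k+1)]\to \lambda^k$ for every fixed $k$; hence $N$ converges in distribution to a Poisson variable of mean $\lambda$ and $\Prob(N\ge 1)\to 1-e^{-\lambda}$.

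What remains is to show $\Prob\!\left(\{h(X)\le \theta\}\,\triangle\,\{N\ge 1\}\right)\to 0$, where $\theta=\frac{C}{\sqrt{1+C^2}}$. The inclusion $\{N\ge 1\}\subseteq\{h(X)\le \theta\}$ is the explicit cut above. For the reverse I would prove that, with probability tending to $1$, no \emph{other} region attains ratio $\le \theta$: isolating a single cusp gives ratio $\to 1$; isolating $m\ge 3$ cusps, or cutting off a piece of positive genus, carries a complementary-volume factor forcing expected count $O(1/g)$, the decisive point being the absence of the $\binom{n}{2}\sim g$ enhancement (this is exactly why $n\asymp\sqrt g$ is the critical window); and a geodesic splitting $X$ into two pieces of comparable area is exponentially rare since $V_{g_1}V_{g_2}\ll V_{g}$ for balanced splittings.

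The main obstacle is this last matching step: converting the heuristic ``the only efficient bottlenecks are short two-cusp geodesics'' into control of \emph{every} $\Omega$ with $\frac{\length(\partial\Omega)}{\min(\area\Omega,\area\Omega^c)}\le \theta$. This requires a structural description of near-optimal isoperimetric regions, whose boundaries have constant geodesic curvature and therefore concentrate in the thin and cuspidal parts, combined with uniform volume estimates excluding all competing topological types. I expect the numerical constant $\frac{\log 2}{\sqrt{4\pi(\log 2+\pi)}}$ to arise precisely as the threshold below which this exclusion, and the embeddedness and mutual disjointness of the cutting hypercycles needed for the Poisson moment computation, can simultaneously be guaranteed.
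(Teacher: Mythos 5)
Your outline follows essentially the same route as the paper, and the quantitative core is sound: the push-off optimization with $\sinh t^{*}=\frac{\ell}{2\pi}$ and optimal ratio $\frac{\ell}{\sqrt{4\pi^2+\ell^2}}$ is exactly the computation in \eqref{mt3-e-2}--\eqref{mt3-e-3} (and the converse construction of the curve $\beta$ at distance $\arcsinh(\mathcal{H}(\beta'))$ inside the collar), while your first-moment calculation $\mathbb{E}[N]\to\lambda(a,C)$ and the factorial moments $\mathbb{E}[(N)_k]\to\lambda^k$ are precisely Lemma \ref{l-exp} combined with Lemma \ref{l-MZ} and the Poisson criterion Theorem \ref{t-pos}, i.e.\ Lemma \ref{l-h2}. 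Your guess about the numerical constant is also correct: $C<\frac{\log 2}{\sqrt{4\pi(\log 2+\pi)}}$ is equivalent to $\frac{C}{\sqrt{1+C^2}}<\frac{\log 2}{2\pi+\log 2}$, which is exactly what makes the choice of $\epsilon$ in \eqref{mt3-e-0} possible, so the threshold arises from excluding the other bottleneck types at ratio $\frac{\log 2}{2\pi}$, as you predicted.

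The genuine gap is the step you yourself flag as ``the main obstacle'': the reverse inclusion, which you leave entirely heuristic. Two remarks on how the paper closes it. First, the ``structural description of near-optimal isoperimetric regions'' you ask for does not need to be developed: since $h(X)$ is itself an infimum over separating systems by \eqref{dh-1}, the Adams--Morgan classification (Theorem \ref{t-AM}) applies directly --- every competitor is a union of horocycles (ratio $\geq 1$, deterministically excluded since $\theta<1$) or of constant-distance neighboring curves of a geodesic multicurve $\alpha'$, and the comparison \eqref{dh-3}, $\mathcal{H}(\alpha)\geq\frac{\mathcal{H}(\alpha')}{1+\mathcal{H}(\alpha')}$, reduces everything to the geodesic Cheeger constant $H(X)$; no analysis of merely near-optimal regions is required. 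Second, the part that does require real work, and which your sketch does not supply, is the uniform first-moment bound over \emph{all} topological types of separating geodesic multicurves: in the paper this is Lemma \ref{l-h1} together with Proposition \ref{p-vol-2}, where the sum over configurations carries the binomial factor $C(m,n(g))=\max_{0\leq i\leq m+1}\binom{n(g)}{i}$ accounting for how the $n(g)\asymp\sqrt g$ cusps distribute across the cut, and beating it forces the four-range case analysis in $m$ up to $\lfloor\chi/2\rfloor$; the threshold $\frac{\log 2}{2\pi}$ enters precisely through the requirement $e^{L_m/m}<2-\delta$. Note also that your enumeration is incomplete as stated: ``isolating a single cusp gives ratio $\to 1$'' is true only for horocycle cuts, whereas a single cusp can be cut off by a two- or three-component geodesic multicurve bounding a pair of pants (the classes $\mathcal{SG}_2$, $\mathcal{SG}_3$, and similarly the one-handle class $\mathcal{SG}_4$), and these are not deterministically excluded --- they need the probabilistic estimates $P_{j,g}(C)\prec\frac{1}{\sqrt g}$ of Lemma \ref{l-h1}, in which the decisive feature is, as you correctly observed, the combinatorial factor $n(g)\asymp\sqrt g$ versus the volume ratio $\asymp\frac{1}{g}$, rather than the $\binom{n(g)}{2}\asymp g$ enhancement enjoyed by the two-cusp configurations. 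Until these moment estimates are carried out, the proof is incomplete at its decisive matching step.
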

\begin{rem*}
\begin{enumerate}
\item For any fixed $\epsilon>0$, it is clear that $$\lim\limits_{a\to\infty}\left(1-e^{-\lambda(a,\epsilon)}\right)=1.$$ 
Then Theorem \ref{mt-3} implies that for any $\epsilon>0$
\[\lim \limits_{a\to \infty}\lim\limits_{g\to \infty}\Prob\left(X\in\mathcal{M}_{g,n(g)}; \ h(X)< \epsilon\right)=1.\]
This together with Buser's inequality \cite[Theorem 7.1]{Bu82} tells that Theorem \ref{mt-1} and  \ref{mt-3} coincide with each other at their concatenation.
\item For any $0<C<\frac{\log 2}{\sqrt{4\pi(\log 2+\pi)}}$, we have $$\lim\limits_{a\to 0}\left(1-e^{-\lambda(a,C)}\right)=0.$$
Then Theorem \ref{mt-3} implies that
\[\lim\limits_{a\to 0^{+}} \lim\limits_{g\to\infty}\Prob\left(X\in\mathcal{M}_{g,n(g)}; \ h(X)> \frac{C}{\sqrt{1+C^2}}\right)=1.\]
This together with the classical Cheeger's inequality \cite[Page 228]{Bu82} tells that Theorem \ref{mt-3} coincides with the following Theorem \ref{mt-4} on the case $n(g)=o(\sqrt{g})$  at their concatenation.
\end{enumerate}
\end{rem*}
\begin{theorem}\label{mt-4}
Assume $n(g)$ satisfies that $$\lim\limits_{g\to\infty}\frac{n(g)}{\sqrt g}=0.$$
Then for any $\epsilon>0$,
$$\lim\limits_{g\to\infty}\Prob\left(X\in\mathcal{M}_{g,n(g)};\ \textnormal{ spec}\left(X\right)\cap \left(0,\frac{1}{4}\left(\frac{\log 2}{\log 2+2\pi}\right)^2-\epsilon\right)=\emptyset\right)=1$$
where $\textnormal{spec}(X)$ is the spectrum of $X$.
\end{theorem}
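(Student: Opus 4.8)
The plan is to derive Theorem~\ref{mt-4} from Cheeger's inequality together with a high-probability lower bound on the Cheeger constant $h(X)$. Since $X\in\sM_{g,n(g)}$ has finite area, its continuous spectrum is $[\tfrac14,\infty)$, which already lies above the threshold $\tfrac14\big(\tfrac{\log2}{\log2+2\pi}\big)^2$; thus the only spectrum that could violate the conclusion consists of discrete eigenvalues in $(0,\tfrac14)$, and for each such eigenvalue Cheeger's inequality gives $\lambda\geq\tfrac14 h(X)^2$. It therefore suffices to prove that for every $\delta>0$,
$\Prob\big(h(X)\geq \tfrac{\log2}{\log2+2\pi}-\delta\big)\to 1$ as $g\to\infty$, since $\tfrac14\big(\tfrac{\log2}{\log2+2\pi}\big)^2$ is exactly the constant in the statement. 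I would take care here to invoke the form of Cheeger's inequality valid on complete finite-area cusped surfaces, and to check that near-minimizers for $h$ do not leak into the cusps, where the isoperimetric ratio of horocyclic regions stays bounded below.

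The heart of the matter is the Cheeger bound, which I would establish by a first-moment argument of exactly the type underlying Theorem~\ref{mt-3}. First I reduce the analytic problem to counting geodesics: by standard regularity theory a region $A$ with $\area(A)\leq\tfrac12\area(X)$ nearly achieving $h(X)$ has boundary of constant geodesic curvature $h=h(X)$, and replacing $\partial A$ by its geodesic representative yields a separating geodesic multicurve $\gamma$ enclosing a subsurface of area $2\pi m$ on the smaller side. An elementary equidistant-collar computation relates this length to the curvature, giving $\length(\gamma)=2\pi m\,C$ with $h=\tfrac{C}{\sqrt{1+C^2}}$ --- the same relation that produces the quantity $\tfrac{C}{\sqrt{1+C^2}}$ in Theorem~\ref{mt-3}. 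Hence $h(X)<\tfrac{C}{\sqrt{1+C^2}}$ forces a separating geodesic multicurve with $\length(\gamma)<2\pi C\cdot(\text{area of smaller side})$, and it is enough to show that for every $C<\tfrac{\log2}{\sqrt{4\pi(\log2+\pi)}}$ the expected number of such multicurves tends to $0$.

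To bound this expectation I would apply Mirzakhani's integration formula, summing over all topological types of $\gamma$ and all ways of distributing the $n(g)$ cusps between the two sides, and insert the Mirzakhani--Zograf asymptotics for $V_{g,n}$ together with the large-genus limit $V_{g,n}(\vec\ell)/V_{g,n}\to\prod_i\tfrac{\sinh(\ell_i/2)}{\ell_i/2}$. The configurations splitting off a bounded number of cusps (most importantly a twice-cusped sphere) produce, after this substitution and the cutoff $\length(\gamma)<2\pi C m$, a contribution converging to the Poisson intensity $\lambda(a,C)=\tfrac{a^2}{4\pi^2}(\cosh\pi C-1)$ of Theorem~\ref{mt-3}; the remaining configurations --- those cutting off a subsurface of positive area fraction --- are controlled by the competition between the exponentially growing length integral and the inverse-binomial volume ratio $V_{g_1,n_1}V_{g_2,n_2}/V_{g,n}$, which stays summable precisely below the stated constant. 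Because the hypothesis $n(g)=o(\sqrt g)$ forces $a=\lim_{g\to\infty}n(g)/\sqrt g=0$, the cusp contribution vanishes and the full expectation tends to $0$; Markov's inequality then yields the Cheeger bound, whence the theorem.

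The hard part will be controlling the sum over the exponentially many topological types \emph{uniformly}, so that the sharp constant $\tfrac{\log2}{\log2+2\pi}$ (equivalently the critical value $C=\tfrac{\log2}{\sqrt{4\pi(\log2+\pi)}}$) is not degraded. In particular, for multicurves with many components and for subsurfaces whose area fraction is bounded away from $\tfrac12$, the asymptotic for $V_{g,n}(\vec\ell)/V_{g,n}$ must be replaced by honest upper bounds valid for all $\ell_i$ and combined with sharp estimates on the ratios $V_{g_1,n_1}V_{g_2,n_2}/V_{g,n}$; here the condition $n(g)=o(\sqrt g)$ is exactly what prevents the cusp-distribution factors $\binom{n(g)}{n_1}$ from overwhelming the volume decay. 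Once this uniform control is in place, the argument closes as above.
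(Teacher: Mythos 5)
Your proposal is correct and follows essentially the same route as the paper: the paper proves Theorem \ref{mt-4} by exactly this first-moment bound over all topological types of separating geodesic multicurves with cutoff $2\pi\chi(\mathfrak{I})C$ (Proposition \ref{lh-1}, using Mirzakhani's integration formula, the crude bound $V_{g,n}(x_1,\dots,x_n)\leq e^{(x_1+\cdots+x_n)/2}V_{g,n}$ rather than the sinh-asymptotics, and the binomial-versus-volume-ratio estimate of Proposition \ref{p-vol-2}, where $n(g)\prec\sqrt g$ controls the factors $C(m,n(g))$), then passes from the geodesic Cheeger constant $H(X)$ to $h(X)$ via Mirzakhani's comparison $h\geq H/(1+H)$ (Proposition \ref{lh-mirz}, itself a consequence of the Adams--Morgan classification you cite), and concludes with the cusped Cheeger inequality of Lemma \ref{l-ch}. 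The one deviation is harmless: you use the sharp equidistant relation $h\geq H/\sqrt{1+H^2}$ instead of the paper's weaker $h\geq H/(1+H)$, which together with the first-moment threshold $C<\frac{\log 2}{2\pi}$ would even yield a constant slightly larger than $\frac{\log 2}{\log 2+2\pi}$ (note the summability threshold is $\frac{\log 2}{2\pi}$, not your $\frac{\log 2}{\sqrt{4\pi(\log 2+\pi)}}$, but since the latter is smaller your claim still goes through), so the stated theorem follows a fortiori.
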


\noindent This theorem above improves the result of Hide \cite{Hide21} for the case that $n=O(g^\alpha)$ when \emph{$\alpha$ is close to $\frac{1}{2}$}; but it is weaker when $\alpha$ is close to $0$. The proof of Theorem \ref{mt-4} is based on Cheeger's inequality and closely follows the work \cite{Mirz13} of Mirzakhani. However, the Cheeger's inequality is not sharp for hyperbolic surfaces of high genus: one may see recent work \cite{BCP22} of Budzinski-Curien-Petri on Cheeger constants of hyperbolic surfaces for large genus. Inspired by \cite{WX22, LW21, AM23} for closed surface case we conjecture that
\begin{conjecture}
Theorem \ref{mt-4} still holds with replacing $\frac{1}{4}\left(\frac{\log 2}{\log 2+2\pi}\right)^2$ by $\frac{1}{4}$.
\end{conjecture}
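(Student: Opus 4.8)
The plan is to discard the isoperimetric route that produced the suboptimal constant in Theorem~\ref{mt-4} and instead run a \emph{trace method} on the Selberg trace formula of the cusped surface, in the spirit of the closed-surface arguments of Wu--Xue \cite{WX22} and Lipnowski--Wright \cite{LW21}, but pushed to the critical threshold $\frac14$ by the Friedman--Ramanujan theory of Anantharaman and Monk. Write the discrete eigenvalues of $X\in\sM_{g,n(g)}$ as $\lambda_j=\frac14+r_j^2$, so that $\lambda_j\in(0,\frac14-\epsilon)$ corresponds to $r_j=it_j$ with $t_j\in(\sqrt\epsilon,\frac12)$. I would fix a length cutoff $L=L(g)\asymp\log g$ and a band-limited even test function $h=h_{\epsilon,L}$, nonnegative on the full spectrum, with inverse Selberg transform $g$ supported in $[-L,L]$, satisfying $h(it)\ge1$ for $t\in(\sqrt\epsilon,\frac12)$ and arranged (by a Beurling--Selberg-type extremal choice) to vanish at the trivial eigenvalue $r=\tfrac{i}{2}$. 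Since $h\ge0$,
\[
\#\{j:\lambda_j(X)\in(0,\tfrac14-\epsilon)\}\ \le\ \sum_{j\ge1}h(r_j),
\]
and the conjecture reduces, via Markov's inequality, to showing $\E\big[\sum_{j\ge1}h(r_j)\big]\to0$ as $g\to\infty$.

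By the trace formula for a finite-area cusped surface, $\sum_{j}h(r_j)$ splits into an identity term $\frac{\area(X)}{4\pi}\int_\R r\tanh(\pi r)\,h(r)\,dr$, a sum over closed geodesics of $\frac{\ell(\gamma_0)}{2\sinh(\ell(\gamma)/2)}\,g(\ell(\gamma))$, and parabolic/scattering terms attached to the $n(g)$ cusps (winding contributions, the Eisenstein/continuous-spectrum integral and the scattering determinant). The decisive structural fact is that the Plancherel measure $\frac{1}{4\pi}r\tanh(\pi r)\,dr$ of $\H$ is supported on $\lambda\ge\frac14$ and carries \emph{no} mass below $\frac14$; hence the identity term measures only the unavoidable spectral leakage of the band-limited $h$ onto $[\frac14,\infty)$. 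With the extremal choice this leakage $\delta(L)$ decays fast enough in $L$ that $\area(X)\,\delta(L)=2\pi(2g-2+n(g))\,\delta(L)=o(1)$ once $L\asymp\log g$, even though $\area(X)\sim4\pi g$. The expectation of the geodesic sum I would then compute with Mirzakhani's integration formula, grouping primitive geodesics by the topological type of a regular neighborhood and integrating the resulting geometric functions against the \wep\ volumes of the complementary pieces, the relevant volume ratios being controlled by the Mirzakhani--Zograf asymptotics \cite{MZ15} and by Lemma~\ref{l-cor-3}.

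The crux is the analytic size of these expected geodesic contributions. The elementary estimate only yields a $\frac{3}{16}$-type bound; to reach $\frac14$ one must show that, after Mirzakhani integration, the contribution of each topological type is a \emph{Friedman--Ramanujan function} of the length variable, that is, a finite combination of terms growing at most like $e^{\ell/2}$ times a controlled polynomial, with the self-intersecting and separating types contributing genuinely lower order in $g$. Feeding such functions into the length-$L$ test function and letting $L\asymp\log g\to\infty$ forces $\E[\sum_j h(r_j)]$ below the threshold that would be needed to sustain an eigenvalue in $(0,\frac14-\epsilon)$, for every $\epsilon>0$; this is precisely the mechanism behind the improvement from $\frac{3}{16}$ to $\frac14$. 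I expect establishing the Friedman--Ramanujan property in the present setting to be \textbf{the main obstacle}: the Anantharaman--Monk machinery was developed for closed surfaces and must be reorganized here to accommodate geodesics that wind around the cusps, as well as the Eisenstein and scattering contributions that appear on the spectral side and have no closed-surface analogue.

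Finally, the cusp-related terms must be shown to be negligible uniformly in $g$, and this is exactly where the hypothesis $\lim_{g\to\infty}n(g)/\sqrt g=0$ should be decisive. It guarantees $\area(X)\sim4\pi g$ is dominated by the genus, that the expected number of short geodesics created near cusps is $o(\sqrt g)$, and, through the volume-ratio control of \cite{MZ15} and Lemma~\ref{l-cor-3}, that the parabolic and scattering terms contribute $o(1)$ to $\E[\sum_j h(r_j)]$. This threshold is consistent with Theorem~\ref{mt-1}, where $n(g)\gg\sqrt g$ already forces arbitrarily small eigenvalues, so $o(\sqrt g)$ is precisely the regime in which the cusps are too sparse to degrade the gap. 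Combining the Friedman--Ramanujan control of the geodesic sum with the vanishing of the identity and cusp terms gives $\E\big[\#\{j:\lambda_j\in(0,\tfrac14-\epsilon)\}\big]\to0$, and Markov's inequality then yields the probabilistic statement of the conjecture.
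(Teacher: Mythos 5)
The statement you are addressing is stated in the paper as an open \emph{conjecture}: the authors give no proof of it, and your proposal does not close it either --- it is a research program, not a proof. You yourself flag the decisive step as unestablished: that after Mirzakhani integration the expected geodesic contributions are Friedman--Ramanujan functions in the cusped setting. That is precisely where all of the difficulty lives. The machinery of Anantharaman and Monk is built for closed surfaces; here one must additionally control geodesics that wind into the cusp regions, and on the geometric side of the integration the volume polynomials $V_{g,n}(x_1,\dots)$ only admit the bound $V_{g,n}(\ve{x})\leq e^{(x_1+\cdots+x_n)/2}V_{g,n}$ of Lemma \ref{l-mirz}, so with a cutoff $L\asymp\log g$ the exponential factors are of polynomial size in $g$ and must be beaten by genuinely new cancellation, not by the elementary estimates available in this paper (which, as Proposition \ref{lh-1} shows, saturate at constants of $\frac{\log 2}{2\pi}$ type and cannot see $\frac14$).

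There are two further concrete gaps. First, your reduction ``$\#\{j:\lambda_j\in(0,\frac14-\epsilon)\}\leq\sum_j h(r_j)$, then bound the expectation of the full spectral side'' silently drops the continuous-spectrum terms of the cusped trace formula: the integral involving the logarithmic derivative of the scattering determinant, and the parabolic terms attached to the $n(g)$ cusps, do not have a definite sign, so the geometric side does not dominate the discrete spectral sum term by term; you must prove, not assert, that their expectations over $\sM_{g,n(g)}$ are $o(1)$, and no tool in the present paper (whose volume estimates such as Lemma \ref{l-MZ}, Lemma \ref{l-cor-3} and Proposition \ref{p-vol-2} concern only $V_{g,n}$ ratios) addresses averages of scattering data. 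Second, the identity-term bookkeeping needs a quantitative extremal-function estimate: with $g$ supported in $[-L,L]$ and $L\asymp\log g$, you need the leakage $\delta(L)$ to satisfy $(2g-2+n(g))\,\delta(L)=o(1)$, i.e.\ super-polynomial decay in $L$ with explicit constants, which a Beurling--Selberg construction must be shown to deliver simultaneously with $h(it)\geq1$ on $(\sqrt\epsilon,\frac12)$, $h\geq0$ on $\R\cup i[-\frac12,\frac12]$, and $h(\frac{i}{2})=0$; these constraints are in tension and no such function is exhibited. In short, the architecture you sketch is the plausible route (it is the analogue of \cite{WX22,LW21} at the $\frac14$ threshold), but every load-bearing step --- the Friedman--Ramanujan property for cusped topological types, the sign and size of the Eisenstein/scattering contributions, and the extremal test function --- is left open, so the conjecture remains exactly as open as the paper leaves it.
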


Define a function $$y(x)\overset{\text{def}}{=}\liminf\limits_{g\to\infty}\textnormal{Prob}_{\textnormal{WP}}^{g}\left(X\in\mathcal{M}_{g,n(g)};\ \lambda_1(X)\leq x\right).$$
Then we may summarize Theorem \ref{mt-1}, Theorem \ref{mt-3} and Theorem \ref{mt-4} as the following picture:
\begin{figure}[ht]
\centering
\includegraphics[width=0.90\textwidth]{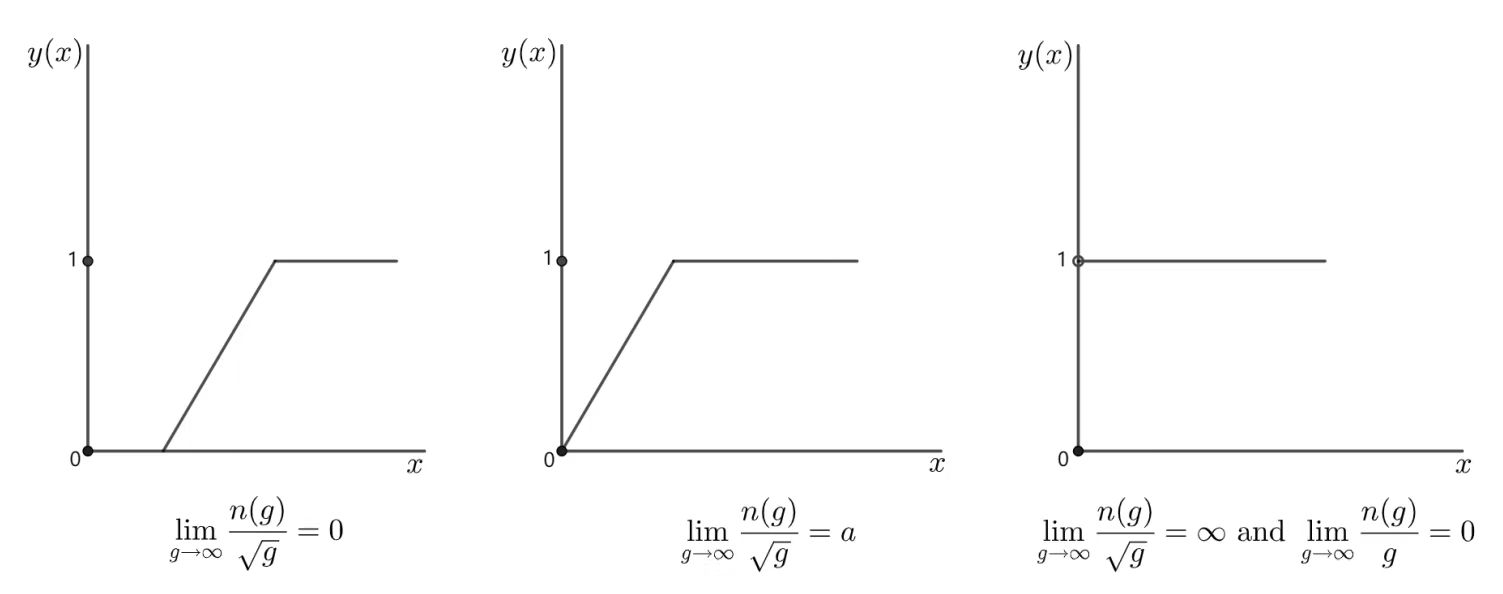}
\caption{Spectral gaps}
\label{fig:03}
\end{figure}
\subsection*{Notations.} For any two non-negative functions $f$ and $h$ (may be of multi-variables), we say $h\succ f$ or $f\prec h$ if there exists a uniform constant $C>0$ such that $f\leq Ch$. And we say $f\asymp h$ if $f\succ h$ and $f\prec h$. For $0\leq m\leq n \in \Z$, we use $\binom{n}{m}$ for binomial coefficient. 

\subsection*{Plan of the paper.} Section \ref{sec-pre} will provide a review of relevant background materials, and recall some useful results for Weil-Petersson measure, spectral theory, Cheeger constant and probability theory. In Section \ref{sec-wp} we will firstly recall some results on Weil-Petersson volume of $\sM_{g,n}$, and then prove some estimations which will be applied later. In Section \ref{multi-curve} we will introduce two special types of separating simple closed multi-curves and estimate the expected values of their related random variables. In Section \ref{sec-proof} we will discuss three different types of $n(g)$ and finish the proofs of Theorem \ref{mt-1}, \ref{mt-3} and \ref{mt-4}. In Section \ref{sec-ques} we will study Question \ref{que-wpv} and give a partial answer to it. 

\subsection*{Acknowledgement}
We are grateful to the ``Reading seminar on Teichm\"uller theory and related topics" at Tsinghua from which this work benefits a lot. We especially would like to thank Yuhao Xue for many helpful discussions on this project, and his comments on an earlier version, and thank Peter Zograf for his useful comments and informing us his early work \cite{Zo84} telling that for large enough $g$, the set with $\lambda_1(X)<\epsilon$ in Theorem \ref{mt-1} is a full set provided that $\lim \limits_{g\to \infty}\frac{n(g)}{g}=\infty$. We are also grateful to Zeev Rudnick for his interests, comments and suggestions on this work. The second named author is partially supported by NSFC grants No. 12171263, 12361141813, and 12425107.


\tableofcontents

\section{Preliminaries}\label{sec-pre}
In this paper we use the same notations as in \cite{NWX20,WX22}. In this section, we review the relevant background materials about Weil-Petersson metric, Mirzakhani's Integration Formula, basic spectral theory for non-compact type hyperbolic surfaces, two elementary probabilistic results, Cheeger constant and so on.


\subsection{Weil-Petersson symplectic form}
Recall that for a surface $S_{g,n}$ of genus $g$ with $n$ punctures where $2g+n\geq 3$, associated to a pants decomposition of $S_{g,n}$, the \emph{Fenchel-Nielsen coordinate}, given by $X\mapsto(\ell_{\alpha_i}(X),\tau_{\alpha_i}(X))_{i=1}^{3g+n-3},$ is a global coordinate for the Teichm\"uller space $\mathcal{T}_{g,n}$ of $S_{g,n}$. Where $\{\alpha_i\}_{i=1}^{3g+n-3}$ are pairwisely disjoint simple closed curves, $\ell_{\alpha_i}(X)$ is the length of $\alpha_i$ on $X$ and $\tau_{\alpha_i}(X)$ is the twist along $\alpha_i$ (measured in length). Wolpert \cite{Wolpert82} showed that the Weil-Petersson symplectic structure has a magic form in Fenchel-Nielsen coordinates. More precisely,
\begin{theorem}[Wolpert]\label{wol-wp}
The Weil-Petersson symplectic form $\omega_{\mathrm{WP}}$ on $\mathcal{T}_{g,n}$ is given by $$\omega_{\textnormal{WP}}=\sum_{i=1}^{3g+n-3}d\ell_{\alpha_i}\wedge d\tau_{\alpha_i}.$$
\end{theorem}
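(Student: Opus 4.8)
The plan is to pin down $\omega_{\mathrm{WP}}$ by evaluating it on the coordinate vector fields of the Fenchel-Nielsen chart. Write $\partial_{\ell_i}=\partial/\partial\ell_{\alpha_i}$ and $\partial_{\tau_i}=\partial/\partial\tau_{\alpha_i}$; the latter is precisely the infinitesimal Fenchel-Nielsen twist (earthquake) vector field along $\alpha_i$. Since $\omega_{\mathrm{WP}}$ is a $2$-form, it suffices to compute the three blocks $\omega_{\mathrm{WP}}(\partial_{\ell_i},\partial_{\ell_j})$, $\omega_{\mathrm{WP}}(\partial_{\ell_i},\partial_{\tau_j})$ and $\omega_{\mathrm{WP}}(\partial_{\tau_i},\partial_{\tau_j})$ and to show they equal $0$, $\delta_{ij}$ and $0$ respectively. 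The whole argument rests on one key identity, the \emph{twist--length duality formula}: for every simple closed curve $\alpha$,
$$\iota_{\partial_{\tau_\alpha}}\omega_{\mathrm{WP}}=-d\ell_\alpha,$$
that is, the twist flow along $\alpha$ is the Hamiltonian flow generated by the geodesic length function $\ell_\alpha$.

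Granting this, the $\ell\tau$ and $\tau\tau$ blocks are immediate. Evaluating the duality formula on $\partial_{\ell_j}$ gives $\omega_{\mathrm{WP}}(\partial_{\tau_i},\partial_{\ell_j})=-d\ell_{\alpha_i}(\partial_{\ell_j})=-\delta_{ij}$, which is the desired $\ell\tau$ block up to the sign fixed by orientation. Evaluating it on $\partial_{\tau_j}$ gives $\omega_{\mathrm{WP}}(\partial_{\tau_i},\partial_{\tau_j})=-\partial_{\tau_j}\ell_{\alpha_i}$; but the curves $\alpha_1,\dots,\alpha_{3g+n-3}$ of a pants decomposition are pairwise disjoint, and twisting along one of them changes the length of none of them (including its own), so $\partial_{\tau_j}\ell_{\alpha_i}=0$ and the $\tau\tau$ block vanishes.

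It remains to kill the $\ell\ell$ block. First, Cartan's formula together with the duality formula and the closedness of the Kähler form $\omega_{\mathrm{WP}}$ gives $\mathcal{L}_{\partial_{\tau_k}}\omega_{\mathrm{WP}}=d\,\iota_{\partial_{\tau_k}}\omega_{\mathrm{WP}}+\iota_{\partial_{\tau_k}}d\omega_{\mathrm{WP}}=d(-d\ell_k)=0$, so the remaining coefficients $a_{ij}:=\omega_{\mathrm{WP}}(\partial_{\ell_i},\partial_{\ell_j})$ are invariant under all twist flows and hence depend only on the lengths $\ell_1,\dots,\ell_{3g+n-3}$ (note the twist flows themselves are \emph{not} Weil-Petersson isometries, so closedness is essential here). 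Next I invoke the anti-holomorphic mirror involution $\sigma\colon\mathcal{T}_{g,n}\to\mathcal{T}_{g,n}$ sending each surface to its orientation-reversed mirror image; in these Fenchel-Nielsen coordinates it acts by $(\ell_i,\tau_i)\mapsto(\ell_i,-\tau_i)$, and being an anti-holomorphic isometry of the Weil-Petersson metric it satisfies $\sigma^*\omega_{\mathrm{WP}}=-\omega_{\mathrm{WP}}$. Applying $\sigma^*$ to $\omega_{\mathrm{WP}}=\sum_i d\ell_i\wedge d\tau_i+\sum_{i<j}a_{ij}\,d\ell_i\wedge d\ell_j$ and using that $\sigma$ fixes the lengths (so $\sigma^*a_{ij}=a_{ij}$) while reversing the twists yields $-\sum_i d\ell_i\wedge d\tau_i+\sum_{i<j}a_{ij}\,d\ell_i\wedge d\ell_j=-\omega_{\mathrm{WP}}$. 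Comparing with $-\omega_{\mathrm{WP}}=-\sum_i d\ell_i\wedge d\tau_i-\sum_{i<j}a_{ij}\,d\ell_i\wedge d\ell_j$ forces $a_{ij}=0$, which finishes the computation.

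The main obstacle is the duality formula itself; everything else is formal. To prove it I would represent the Fenchel-Nielsen twist vector field $\partial_{\tau_\alpha}$ by its harmonic Beltrami differential and pair it against an arbitrary tangent vector using the Weil-Petersson cometric, comparing the outcome with Gardiner's formula for the differential $d\ell_\alpha$ of geodesic length; equivalently, one establishes Wolpert's cosine formula $\partial_{\tau_\alpha}\ell_\beta=\sum_{p\in\alpha\cap\beta}\cos\theta_p$ for the twist derivative of length and matches it against the symplectic pairing. This is the genuinely analytic step, requiring the explicit geometry of the twist deformation near $\alpha$ and the $L^2$-theory of quadratic and Beltrami differentials, and it is where the precise constant and sign in the statement get pinned down.
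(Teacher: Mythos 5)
The paper offers no proof of this statement: it is quoted as Theorem~\ref{wol-wp} directly from Wolpert \cite{Wolpert82}, so there is no internal argument to compare yours against. Measured against the literature, your skeleton is essentially Wolpert's own: reduce everything to the twist--length duality $\iota_{\partial_{\tau_\alpha}}\omega_{\mathrm{WP}}=-d\ell_\alpha$, read off the $\ell\tau$ and $\tau\tau$ blocks, and kill the $\ell\ell$ block by combining twist-invariance of $\omega_{\mathrm{WP}}$ (via Cartan's formula and closedness, exactly as you say) with the anti-holomorphic mirror involution acting by $(\ell_i,\tau_i)\mapsto(\ell_i,-\tau_i)$ and satisfying $\sigma^*\omega_{\mathrm{WP}}=-\omega_{\mathrm{WP}}$. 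Your sign bookkeeping is consistent throughout, and the reflection argument correctly uses that the coefficients $a_{ij}$, being twist-invariant, are functions of lengths alone and hence $\sigma$-invariant. One small simplification: the vanishing $\partial_{\tau_j}\ell_{\alpha_i}=0$ needs no geometric input, since $(\ell_i,\tau_i)$ form a coordinate system, so the coordinate vector field $\partial_{\tau_j}$ annihilates the coordinate function $\ell_{\alpha_i}$ by definition; the geometric fact about disjoint curves is only needed for curves $\beta$ outside the pants decomposition.

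The one caveat is that your proposal is a proof only modulo the duality formula, which you defer to a sketch (harmonic Beltrami representation of the twist field paired against Gardiner's formula, or equivalently Wolpert's cosine formula $\partial_{\tau_\alpha}\ell_\beta=\sum_{p\in\alpha\cap\beta}\cos\theta_p$ matched to the symplectic pairing). That formula carries the entire analytic content of the theorem --- the correct constant and sign included --- and is itself a substantial theorem of Wolpert, so as a self-contained argument your write-up is incomplete at precisely the point where all the difficulty lives. As a reduction of the magic formula to the duality formula, however, it is complete and correct, and it is the standard route.
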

We mainly work with the \emph{Weil-Petersson volume form}
$$
\dvol_{\mathrm{WP}}\overset{\text{def}}{=}\tfrac{1}{(3g+n-3)!}\underbrace{\omega_{\WP}\wedge\cdots\wedge\omega_{\WP}}_{\text{$3g+n-3$ copies}}~.
$$
It is a mapping class group invariant measure on $\T_{g,n}$, hence is the lift of a measure on the moduli space $\M_{g,n}$ of Riemann surfaces of genus $g$ with $n$ punctures, which we also denote by $\dvol_{\WP}$. The total volume of $\M_{g,n}$ is finite and we denote it by $V_{g,n}$. The Weil-Petersson volume form is also well-defined on the weighted moduli space $\M_{g,n}(\ve{L})$ and its total volume, denoted by $V_{g,n}(\ve{L})$, is finite.

Following Mirzakhani \cite{Mirz13}, we view a measurable function $f:\mathcal{M}_{g,n}\to\mathbb{R}$ as a random variable on $\mathcal{M}_{g,n}$ with respect to the probability measure $\textnormal{Prob}_{\textnormal{WP}}^{g,n}$ defined by normalizing $dvol_{\textnormal{WP}}$, and let $\mathbb{E}_{\textnormal{WP}}^{g,n}[f]$ denote the expected value. Namely
$$\textnormal{Prob}_{\textnormal{WP}}^{g,n}(\mathcal{A})\overset{\text{def}}{=}\frac{1}{V_{g,n}}\int_{\mathcal{M}_{g,n}}\textbf{1}_{\mathcal{A}}dX
,\ \mathbb{E}_{\textnormal{WP}}^{g,n}[f]\overset{\text{def}}{=}\frac{1}{V_{g,n}}\int_{\mathcal{M}_{g,n}}f(X)dX,$$
where $\mathcal{A}\subset\mathcal{M}_{g,n}$ is any Borel subset, $\mathbf{1}_{\mathcal{A}}:\mathcal{M}_{g,n}\to\{0,1\}$ is its characteristic function, and we always denote $dvol_{\mathrm{WP}}(X)$ as $dX$ for short. When $n=n(g)$ depends on $g$, we write $\textnormal{Prob}_{\textnormal{WP}}^{g,n}$ and $\mathbb{E}_{\textnormal{WP}}^{g,n}$ as $\Prob$ and $\E$ for short respectively.


\subsection{Mirzakhani's Integration Formula}

In this subsection we recall an integration formula due to Mirzakhani in \cite{Mirz07a, Mirz13}, which plays a role in the study of random surfaces in the \wep \ model.

Given any non-peripheral closed curve $\gamma$ on a topological surface $S_{g,n}$ and a hyperbolic surface $X\in\T_{g,n}$, we denote by $\ell_\gamma(X)$ the hyperbolic length of the unique closed geodesic representing $\gamma$ on $X$. Let $\Gamma=(\gamma_1,\cdots,\gamma_k)$ be an \emph{ordered} $k$-tuple where the $\gamma_i$'s are distinct disjoint homotopy classes of nontrivial, non-peripheral, simple closed curves on $S_{g,n}$. Let $\Mod_{g,n}$ be the mapping class group of $S_{g,n}$. We consider the orbit containing $\Gamma$ under $\Mod_{g,n}$ action
\begin{equation*}
\mathcal O_{\Gamma} = \{(h\cdot\gamma_1,\cdots,h\cdot\gamma_k) ; \ h\in\Mod\nolimits_{g,n}\}.
\end{equation*}
Given a function $F:\R^k_{\geq0} \rightarrow \R_{\geq0}$, one may define a function on $\M_{g,n}$ as follows.
\begin{eqnarray*}
F^\Gamma:\M_{g,n} &\rightarrow& \R \\
X &\mapsto& \sum_{(\alpha_1,\cdots,\alpha_k)\in \mathcal O_\Gamma} F(\ell_{\alpha_1}(X),\cdots,\ell_{\alpha_k}(X)).
\end{eqnarray*}

\noindent Assume $S_{g,n}\setminus\bigcup\limits_{j=1}^k\gamma_j \cong \bigcup\limits_{i=1}^s S_{g_i,n_i}$. We consider the volume
\begin{equation*}
V_{g,n}(\Gamma,\boldsymbol{x}) = \prod_{i=1}^s V_{g_i,n_i}(\boldsymbol{x}^{(i)})
\end{equation*}
where $\boldsymbol{x}^{(i)}$ is the list of those coordinates $x_j$ of $\boldsymbol{x}$ such that $\gamma_j$ is a boundary component of $S_{g_i,n_i}$, and $V_{g_i,n_i}(\boldsymbol{x}^{(i)})$ is the Weil-Petersson volume of the moduli space $\M_{g_i,n_i}(\boldsymbol{x}^{(i)})$. Mirzakhani established the following formula by applying Wolpert's magic formula Theorem \ref{wol-wp}. One may also refer to \cite[Theorem 7.1]{Mirz07a} or \cite[Theorem 2.2]{MP19} for more details on this formula.

\begin{theorem}[Mirzakhani]\label{Mirz-for}
For any $\Gamma=(\gamma_1,\cdots,\gamma_k)$, the integral of $F^\Gamma$ over $\M_{g,n}$ with respect to Weil-Petersson metric is given by
\begin{equation*}
\int_{\M_{g,n}} F^\Gamma(X)dX =
C_\Gamma\int_{\R^k_{\geq0}} F(x_1,\cdots,x_k)V_{g,n}(\Gamma,\boldsymbol{x}) \boldsymbol{x}\cdot d\boldsymbol{x}
\end{equation*}
where $\boldsymbol{x}\cdot d\boldsymbol{x} = x_1\cdots x_k dx_1\wedge\cdots\wedge dx_k$ and the constant $C_\Gamma \in (0,1]$ only depends on $\Gamma$.
\end{theorem}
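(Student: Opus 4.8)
The plan is to \emph{unfold} the sum defining $F^\Gamma$ over the mapping class group orbit $\mathcal{O}_\Gamma$ by passing to an intermediate cover of $\M_{g,n}$, and then to factorize the Weil-Petersson volume form along a pants decomposition refining $\Gamma$ using Wolpert's Theorem \ref{wol-wp}. First I would introduce the stabilizer $\mathrm{Stab}(\Gamma) = \{h \in \Mod_{g,n} : h \cdot \gamma_i = \gamma_i \text{ for all } i\}$ and the intermediate cover $\M_{g,n}^\Gamma \overset{\text{def}}{=} \T_{g,n}/\mathrm{Stab}(\Gamma)$, which carries a natural covering map to $\M_{g,n}=\T_{g,n}/\Mod_{g,n}$. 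Since $\mathcal{O}_\Gamma$ is in bijection with the cosets $\Mod_{g,n}/\mathrm{Stab}(\Gamma)$, and $\ell_{h\gamma_i}(X)=\ell_{\gamma_i}(h^{-1}X)$, the orbit sum unfolds: using that $\dvol_{\WP}$ is $\Mod_{g,n}$-invariant, integrating $F^\Gamma$ over $\M_{g,n}$ equals integrating the single term $F(\ell_{\gamma_1}(X),\dots,\ell_{\gamma_k}(X))$ over $\M_{g,n}^\Gamma$. This is the crucial step that trades an orbit sum for an integral over a larger space.

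Next I would choose Fenchel-Nielsen coordinates adapted to $\Gamma$: complete $\Gamma$ to a pants decomposition and use the length-twist pairs $(\ell_{\gamma_j},\tau_{\gamma_j})_{j=1}^k$ of the cutting curves together with the Fenchel-Nielsen coordinates of the complementary pieces $S_{g_i,n_i}$. By Wolpert's formula the volume form splits as
\[
\dvol_{\WP} \;=\; \bigwedge_{j=1}^k \big(d\ell_{\gamma_j}\wedge d\tau_{\gamma_j}\big)\;\wedge\;\prod_{i=1}^s \dvol_{\WP}^{(i)},
\]
where $\dvol_{\WP}^{(i)}$ is the Weil-Petersson form on $\T_{g_i,n_i}(\boldsymbol{x}^{(i)})$ with boundary lengths $\boldsymbol{x}^{(i)}$ cut out by the $\ell_{\gamma_j}=x_j$. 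On $\M_{g,n}^\Gamma$ the Dehn twist $T_{\gamma_j}\in\mathrm{Stab}(\Gamma)$ acts by $\tau_{\gamma_j}\mapsto \tau_{\gamma_j}+\ell_{\gamma_j}$, so $\tau_{\gamma_j}$ descends to a circle of circumference $x_j$; integrating it out produces the factor $x_1\cdots x_k$, i.e.\ the measure $\boldsymbol{x}\cdot d\boldsymbol{x}$. The integral over the moduli of pieces with fixed boundary lengths is then exactly $\prod_{i=1}^s V_{g_i,n_i}(\boldsymbol{x}^{(i)})=V_{g,n}(\Gamma,\boldsymbol{x})$, where I would invoke $S_{g,n}\setminus\bigcup_j\gamma_j\cong\bigcup_i S_{g_i,n_i}$.

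The hard part will be pinning down the constant $C_\Gamma$. The subgroup of $\mathrm{Stab}(\Gamma)$ generated by the Dehn twists along the $\gamma_j$ and by the mapping classes supported on the pieces is in general a \emph{proper} subgroup of $\mathrm{Stab}(\Gamma)$; the finite quotient — consisting of mapping classes that permute homeomorphic pieces or act by (half-)twist symmetries interchanging the two sides of some $\gamma_j$ — causes the naive integral above to overcount each point of $\M_{g,n}^\Gamma$. Tracking this finite index, together with the possible factor of $\tfrac{1}{2}$ for each $\gamma_j$ admitting a side-swapping symmetry, yields $C_\Gamma\in(0,1]$ depending only on the topological type of $\Gamma$. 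The delicate bookkeeping is precisely to verify that the identifications in $\M_{g,n}^\Gamma$ are by the \emph{full} stabilizer rather than merely the twist-plus-pieces subgroup, so that the coordinate change and the integration over the twist circles are carried out on the correct quotient; once this index computation is in place, assembling the factorization gives the claimed formula.
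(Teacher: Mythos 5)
Your proposal is correct and takes exactly the route the paper indicates: the paper does not prove Theorem \ref{Mirz-for} itself but cites Mirzakhani \cite{Mirz07a} (see also \cite{MP19}), noting the formula is established by applying Wolpert's magic formula, Theorem \ref{wol-wp}. Your outline --- unfolding the orbit sum over the intermediate cover $\T_{g,n}/\mathrm{Stab}(\Gamma)$, factoring $\dvol_{\WP}$ in Fenchel--Nielsen coordinates adapted to $\Gamma$, integrating the twist parameters over circles of circumference $x_j$ to produce $x_1\cdots x_k$, and absorbing the finite residual symmetries of $\mathrm{Stab}(\Gamma)$ (side-swapping and piece-permuting classes beyond the twist-plus-pieces subgroup) into the constant $C_\Gamma\in(0,1]$ --- is precisely the standard argument of those references.
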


\begin{rem*}
Assume $\Gamma=(\gamma_1,...,\gamma_k)$ is a simple closed multi-curve on $S_{g,n}$. If $(g,n)\neq (1,1),(2,0)$, then the coffecient $C_{\Gamma}$ in Theorem \ref{Mirz-for} is given as 
\begin{align}\label{e-coff}
C_\Gamma=\frac{2^{-M(\Gamma)}}{|\textnormal{Stab}(\Gamma):\cap_{i=1}^k\textnormal{Stab}^+(\gamma_i)|},
\end{align}
where 
$$M(\Gamma)=|\{i;\ \gamma_i\text{ separates off a one-handle from }S_{g,n}\}|,$$
$\textnormal{Stab}(\Gamma)$ is the stablizer of $\Gamma$ and $\textnormal{Stab}^{+}(\gamma_i)\ (1\leq i\leq k)$ is the subgroup of the mapping class group that fixes $\gamma_i$ and its orientation. For more details, one may refer to the footnote in \cite[Page 368]{Wri20}.
\end{rem*}


\subsection{Basic spectral theory}
For facts in this subsection, one may refer to \eg \cite{Ber-book, Bor-book} for more details. Let $X\in \sM_{g,n}$ where $n\geq 1$ be a non-compact hyperbolic surface of finite area, $\Delta$ be its Laplacian operator and $L^2(X)$ be the space of square-integrable functions on $X$. Starting from the domain $\mathcal{D}\overset{\text{def}}{=}\{f\in C_0^\infty(X); \ \textit{$f$ and $\Delta f \in L^2(X)$}\}$, the Friedrichs extension  uniquely extends $\Delta$ to be a non-negative self-adjoint operator on $L^2(X)$. The spectrum of $\Delta$ on $X$ consists of absolutely continuous spectrum $[\frac{1}{4},\infty)$ with multiplicity $n$ and possibly discrete spectrum in $[0,\infty)$. A number $\lambda\geq 0$ is called an \emph{eigenvalue} of $X$ if there exists a function $\phi\neq 0 \in L^2(X)$ such that
\[\Delta \phi=\lambda \cdot \phi.\]
Clearly $0$ is an eigenvalue and the constant function $\frac{1}{\sqrt{\mathrm{Area(X)}}}$ is the normalized $0$-eigenfunction. For a compact hyperbolic surface, its first non-zero eigenvalue can be determined by \emph{Rayleigh quotient} through a Min-Max Principle. For non-compact hyperbolic surface, it is hard to detect the existence of a non-zero eigenvalue of $X$ (see Question \ref{in-q-1}). The following fundamental result for spectral theory is well-known and important for the existence of first eigenvalue in Theorem \ref{mt-1}.
\bt \label{exist} (see \eg \cite[Theorem XIII.1]{RS-book})
Let $X$ be a non-compact hyperbolic surface of finite area. If
\[\mathrm{RayQ}(X)\overset{\text{def}}{=}\inf\limits_{f\neq 0\in L^2(X);  \int_X f=0} \frac{\int_X |\nabla f|^2}{\int_X f^2}<\frac{1}{4},\]
then $X$ has a non-zero first eigenvalue $\lambda_1(X)$ with $\lambda_1(X)=\mathrm{RayQ}(X)$.
\et

\noindent The second named author is grateful to Shiping Liu for pointing out the reference \cite{RS-book} to us. In this paper, we mainly study spectral gaps of Weil-Petersson random surfaces for large genus. For recent developments on spectral theory of Weil-Petersson random surfaces, one may see \cite{GMST21, Mirz13, Monk21, Ru22} and references therein for related topics.


\subsection{Probabilistic results}\label{sec-prob}
In this section, we recall two results from probability theory which will be applied later. For a probability space $(\Omega,\mathbb{P})$, we have
\begin{lemma}\label{l-cs}
For any non-negative integer-valued random variable $N$, the following inequality holds:
$$\mathbb{P}(N\geq 1)\geq\frac{\mathbb{E}[N]^2}{\mathbb{E}[N^2]}$$
where $\mathbb{E}[\cdot]$ is the expected value.
\end{lemma}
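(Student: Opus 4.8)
The statement to prove is Lemma~\ref{l-cs}, a standard second-moment (Cauchy--Schwarz) bound: for a non-negative integer-valued random variable $X$ on a probability space $(\Omega,\mathbb{P})$, one has $\mathbb{P}(X\geq 1)\geq \mathbb{E}[X]^2/\mathbb{E}[X^2]$. This is the classical Paley--Zygmund-type inequality in its simplest form, and the proof is short.

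The plan is to apply the Cauchy--Schwarz inequality to the product $X\cdot\mathbf{1}_{\{X\geq 1\}}$. First I would observe that because $X$ is non-negative and integer-valued, the event $\{X\geq 1\}$ coincides with the event $\{X\neq 0\}$, so $X=X\cdot\mathbf{1}_{\{X\geq 1\}}$ holds pointwise on all of $\Omega$ (on $\{X=0\}$ both sides vanish, and on $\{X\geq 1\}$ the indicator equals $1$). Taking expectations gives $\mathbb{E}[X]=\mathbb{E}[X\cdot\mathbf{1}_{\{X\geq 1\}}]$.

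Next I would apply Cauchy--Schwarz to the right-hand side, writing
\[
\mathbb{E}[X\cdot\mathbf{1}_{\{X\geq 1\}}]\leq \sqrt{\mathbb{E}[X^2]}\cdot\sqrt{\mathbb{E}[\mathbf{1}_{\{X\geq 1\}}^2]}=\sqrt{\mathbb{E}[X^2]}\cdot\sqrt{\mathbb{P}(X\geq 1)},
\]
using that $\mathbf{1}_{\{X\geq 1\}}^2=\mathbf{1}_{\{X\geq 1\}}$ and $\mathbb{E}[\mathbf{1}_{\{X\geq 1\}}]=\mathbb{P}(X\geq 1)$. Combining with the previous identity yields $\mathbb{E}[X]\leq \sqrt{\mathbb{E}[X^2]}\cdot\sqrt{\mathbb{P}(X\geq 1)}$, and squaring and rearranging gives the claimed bound. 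If $\mathbb{E}[X^2]=0$ then $X=0$ almost surely, so $\mathbb{E}[X]=0$ and the inequality holds trivially (interpreting the right-hand side as $0$); otherwise we may divide by $\mathbb{E}[X^2]>0$.

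There is essentially no obstacle here: the only point requiring a moment's care is the degenerate case $\mathbb{E}[X^2]=0$, which must be addressed so that the division is justified, and the observation that the integer-valued hypothesis guarantees $\{X\geq 1\}=\{X\neq 0\}$ so that the pointwise identity $X=X\cdot\mathbf{1}_{\{X\geq 1\}}$ is exact rather than merely an inequality. Everything else is a direct application of Cauchy--Schwarz.
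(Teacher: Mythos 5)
Your proof is correct and follows exactly the same route as the paper: write $\mathbb{E}[X]=\mathbb{E}[X\cdot\mathbf{1}_{\{X\geq 1\}}]$ and apply Cauchy--Schwarz, using $\mathbf{1}_{\{X\geq 1\}}^2=\mathbf{1}_{\{X\geq 1\}}$. Your explicit handling of the degenerate case $\mathbb{E}[X^2]=0$ is a small refinement the paper leaves implicit, but the argument is otherwise identical.
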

\begin{proof}
By Cauchy-Schwarz inequality, we have
\begin{align*}
\mathbb{E}[N]^2&=\mathbb{E}[N\cdot 1_{\{N\geq 1\}}]^2\\
&\leq \mathbb{E}[N^2]\cdot\mathbb{E}[1^2_{\{N\geq 1\}}]\\
&=\mathbb{E}[N^2]\cdot\mathbb{P}(N\geq 1).
\end{align*}
It follows that
$$\mathbb{P}(N\geq 1)\geq\frac{\mathbb{E}[N]^2}{\mathbb{E}[N^2]}$$ as desired.
\end{proof}

Let $\mathbb{N}$ be the set of all positive integers. For any $r\in \mathbb{N}$ and a random variable $N:\Omega\to\mathbb{N}\cup\{0\}$, we define the random variable
$$(N)_r\overset{\text{def}}{=}N(N-1)\cdots(N-r+1).$$
If the expected value $\mathbb{E}[(N)_r]$ of $(N)_r$ exists, then it is called the \emph{$r$-th factorial moment} of $N$. Recall that a non-negative integer-valued random variable $N$ is said to be \emph{Poisson distributed with mean $\lambda\in[0,\infty)$} if
$$\mathbb{P}(N=k)=\frac{\lambda^ke^{-\lambda}}{k!}\text{ for all }k\in\mathbb{N}\cup\{0\}.$$
It is known that a non-negative integer-valued random variable $N$ is Poisson distributed if and only if its $r$-th factorial moment is equal to $\lambda^n$ for all $r\in\mathbb{N}$. This can be generalized to the following form (see \eg \cite[Theorem 2.9]{MP19} or \cite[Theorem 1.21]{Bo-book}):
\begin{theorem}\label{t-pos}
Let $\{N_n\}_{n\geq 1}$ be a sequence of non-negative integer-valued random variables. If there exists a constant $\lambda>0$ such that for all $r\in\mathbb{N}$,
$$\lim\limits_{n\to\infty}\mathbb{E}\left[(N_n)_r\right]=\lambda^r,$$
then we have for all $k\in\mathbb{N} \cup\{0\}$,
$$\lim\limits_{n\to\infty}\mathbb{P}(N_n=k)=\frac{\lambda^k e^{-\lambda}}{k!}.$$
\end{theorem}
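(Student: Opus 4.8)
The plan is to prove this by the method of factorial moments: I will recover each point mass $\mathbb{P}(X_n=k)$ from the factorial moments via an exact inversion formula, and then pass to the limit $n\to\infty$. The starting point is the observation that for any non-negative integer-valued random variable $X$ with finite factorial moments one has $\mathbb{E}[(X)_r]=\sum_{m\geq r}\frac{m!}{(m-r)!}\mathbb{P}(X=m)$, directly from the definition of $(X)_r$. Setting $r=k+j$, weighting by $\frac{(-1)^j}{j!\,k!}$ and summing over $j\geq 0$, the key algebraic identity $\frac{m!}{k!\,j!\,(m-k-j)!}=\binom{m}{k}\binom{m-k}{j}$ together with $\sum_{j=0}^{m-k}(-1)^j\binom{m-k}{j}=(1-1)^{m-k}$ collapses the double sum to the single term $m=k$, yielding the inversion formula
$$\mathbb{P}(X=k)=\frac{1}{k!}\sum_{j=0}^{\infty}\frac{(-1)^j}{j!}\,\mathbb{E}\left[(X)_{k+j}\right].$$
Applied to $X=X_n$, this expresses the quantity we want to control as an alternating series in the factorial moments, each term of which converges to $\lambda^{k+j}$ by hypothesis.

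The main obstacle is that one cannot simply interchange $\lim_{n\to\infty}$ with the infinite summation above: the hypothesis only gives termwise convergence of the factorial moments, with no uniform (in $n$) control on the tail of the series. I would circumvent this by establishing Bonferroni-type one-sided bounds. Truncating the inversion at level $J$ and regrouping, the coefficient of $\mathbb{P}(X=m)$ for $m>k$ becomes $\binom{m}{k}\sum_{j=0}^{\min(J,m-k)}(-1)^j\binom{m-k}{j}$, and by the partial alternating binomial identity $\sum_{j=0}^{J}(-1)^j\binom{M}{j}=(-1)^J\binom{M-1}{J}$ (with the convention that it vanishes for $J\geq M$) every such coefficient has sign $(-1)^J$. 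Hence for even truncation the truncated sum overcounts $\mathbb{P}(X=k)$ and for odd truncation it undercounts it, giving, for every $n$ and every $M$,
$$\frac{1}{k!}\sum_{j=0}^{2M+1}\frac{(-1)^j}{j!}\,\mathbb{E}\left[(X_n)_{k+j}\right]\;\leq\;\mathbb{P}(X_n=k)\;\leq\;\frac{1}{k!}\sum_{j=0}^{2M}\frac{(-1)^j}{j!}\,\mathbb{E}\left[(X_n)_{k+j}\right].$$

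With the sandwich in hand the conclusion is immediate. Each bounding expression is a \emph{finite} sum of factorial moments, so I may pass to the limit termwise and replace $\mathbb{E}[(X_n)_{k+j}]$ by $\lambda^{k+j}$; this gives
$$\frac{1}{k!}\sum_{j=0}^{2M+1}\frac{(-1)^j}{j!}\lambda^{k+j}\;\leq\;\liminf_{n\to\infty}\mathbb{P}(X_n=k)\;\leq\;\limsup_{n\to\infty}\mathbb{P}(X_n=k)\;\leq\;\frac{1}{k!}\sum_{j=0}^{2M}\frac{(-1)^j}{j!}\lambda^{k+j}.$$
Letting $M\to\infty$, both outer bounds converge to $\frac{\lambda^k}{k!}\sum_{j\geq 0}\frac{(-\lambda)^j}{j!}=\frac{\lambda^k e^{-\lambda}}{k!}$, forcing $\lim_{n\to\infty}\mathbb{P}(X_n=k)=\frac{\lambda^k e^{-\lambda}}{k!}$, as desired. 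I expect the verification of the Bonferroni inequalities (equivalently, the sign of the truncated coefficients) to be the only genuinely delicate point; an alternative route avoiding them would be to prove tightness from $\mathbb{E}[X_n]\to\lambda$, upgrade factorial-moment convergence along subsequences using the uniform integrability supplied by boundedness of the next moment, and finally invoke the fact that the Poisson law is determined by its moments.
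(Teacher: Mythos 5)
Your proof is correct, and there is nothing in the paper to compare it against: the paper does not prove Theorem \ref{t-pos} but quotes it from \cite[Theorem 2.9]{MP19} and \cite[Theorem 1.21]{Bo-book}, where the argument given is exactly the one you propose --- recovering $\mathbb{P}(X_n=k)$ from factorial (equivalently binomial) moments and sandwiching it via the alternating Bonferroni truncations, whose one-sided validity follows from the sign $(-1)^J$ of the truncated coefficients $\binom{m}{k}(-1)^J\binom{m-k-1}{J}$, so that only finite sums of factorial moments need to pass to the limit in $n$ before letting the truncation level tend to infinity. You also correctly identified and avoided the one genuine pitfall, namely the unjustified interchange of $\lim_{n\to\infty}$ with the infinite inversion series, so your write-up is essentially the canonical proof of this standard result.
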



\subsection{Cheeger constant}\label{s-ch}
For any hyperbolic surface $X$, the Cheeger constant $h(X)$ of $X$ is defined as
$$h(X)\overset{\text{def}}{=}\inf\limits_{E\subset X}\frac{\ell(E)}{\min\{\textnormal{Area}(X_1),\textnormal{Area}(X_2)\}}$$
where $E$ runs over all rectifiable one-dimensional subsets of $X$ which divide $X$ into two disjoint components $X_1$ and $X_2$, $\ell(E)$ is the length of $E$, and $\textnormal{Area}(X_i)$ is the area of $X_i$ for $i=1,2$. For the case that $X$ is compact, let $\lambda_1(X)$ be the first eigenvalue of the Laplacian operator on $X$, the Cheeger's inequality \cite{Ch70} tells that $$\lambda_1(X)\geq \frac{1}{4}h^2(X).$$
For the case that $X$ is non-compact, since the existence of the first eigenvalue is not clear, we consider $\textnormal{RayQ}(X)$ instead of $\lambda_1(X)$. The following Cheeger's inequality for cusped hyperbolic surfaces will be applied in the proof of Theorem \ref{mt-4}.
\begin{lemma}[see \eg \textit{\cite[Page 228]{Bu82}}]\label{l-ch}
Assume $X$ is a complete hyperbolic surface of finite area, then
$$\textnormal{RayQ}(X)\geq \frac{1}{4}h^2(X).$$
\end{lemma}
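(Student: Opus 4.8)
The plan is to adapt the classical proof of Cheeger's inequality via the coarea formula, taking care that $X$ is non-compact of finite area. Since the infimum defining $\mathrm{RayQ}(X)$ may be taken over the dense class of mean-zero functions in $C_0^\infty(X)$ (the Rayleigh quotient being continuous in the $H^1$-norm on the mean-zero subspace), it suffices to establish $\int_X|\nabla f|^2 \geq \frac14 h^2(X)\int_X f^2$ for every $f \in C_0^\infty(X)$ with $\int_X f = 0$; the bound then passes to the full infimum.

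First I would record the key one-sided estimate: for any non-negative Lipschitz $u$ on $X$ whose superlevel set $\{u>0\}$ has area at most $\frac12\mathrm{Area}(X)$, one has $\int_X |\nabla u| \geq h(X)\int_X u$. This follows from the coarea formula, which gives $\int_X|\nabla u| = \int_0^\infty \ell(u^{-1}(t))\,dt$ and $\int_X u = \int_0^\infty \mathrm{Area}(\{u>t\})\,dt$: for a.e. $t>0$ Sard's theorem makes $u^{-1}(t)$ a smooth compact one-submanifold separating the set $\{u>t\}$ (of area $\leq \frac12\mathrm{Area}(X)$) from its complement, so the definition of $h(X)$ yields $\ell(u^{-1}(t)) \geq h(X)\,\mathrm{Area}(\{u>t\})$; integrating in $t$ gives the claim. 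The compact support of $f$ guarantees these level sets are compact, so they are legitimate competitors in the Cheeger infimum even though $X$ has cusps.

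Next I would reduce to a function with \emph{median zero}. Choosing a median $t_0$ of $f$ and setting $g = f - t_0$, both $\{g>0\}$ and $\{g<0\}$ have area $\leq \frac12\mathrm{Area}(X)$; moreover $\nabla g = \nabla f$ and $\int_X g^2 = \int_X f^2 + t_0^2\,\mathrm{Area}(X) \geq \int_X f^2$ because $\int_X f = 0$, so it is enough to prove $\int_X|\nabla g|^2 \geq \frac14 h^2(X)\int_X g^2$. Applying the key estimate to $u = g_+^2$ (whose support $\{g>0\}$ has area $\leq \frac12\mathrm{Area}(X)$) and using $\int|\nabla(g_+^2)| = 2\int g_+|\nabla g_+|$ together with Cauchy--Schwarz gives $h(X)\int g_+^2 \leq 2\big(\int g_+^2\big)^{1/2}\big(\int|\nabla g_+|^2\big)^{1/2}$, hence $\int|\nabla g_+|^2 \geq \frac14 h^2(X)\int g_+^2$; the same holds for $g_-$. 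Summing the two inequalities and using $g^2 = g_+^2 + g_-^2$ and $|\nabla g|^2 = |\nabla g_+|^2 + |\nabla g_-|^2$ almost everywhere completes the estimate for $g$, and therefore for $f$.

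The main obstacle is purely the non-compactness: I must ensure the coarea formula and the Cheeger constant interact correctly when $X$ has cusps. This is handled by restricting to compactly supported test functions (so all relevant level sets are compact and separate $X$ into two finite-area pieces) and by exploiting that $\mathrm{Area}(X) < \infty$, which is exactly what makes the median shift and the inequality $\int_X g^2 \geq \int_X f^2$ legitimate. The remaining points---Sard's theorem for a.e. regular level, and the density of mean-zero $C_0^\infty$ functions needed to pass the inequality to the full $\mathrm{RayQ}(X)$ infimum---are standard.
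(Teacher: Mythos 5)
The paper offers no proof of this lemma at all: it is quoted with the citation \cite[Page 228]{Bu82}, where Buser notes that Cheeger's original argument extends verbatim to complete finite-volume manifolds. Your proposal supplies exactly that classical argument (coarea formula plus Sard for the key $L^1$-estimate $\int|\nabla u|\geq h(X)\int u$, reduction to a median-zero function, then Cauchy--Schwarz applied to $g_\pm^2$), so it is correct and consistent with the cited source rather than an alternative route. Two small points are worth tightening. First, after the median shift the function $g=f-t_0$ is \emph{not} compactly supported when $t_0\neq 0$: one of $g_+$, $g_-$ equals $|t_0|$ on the cusps outside $\mathrm{supp}\,f$, so your parenthetical ``the compact support of $f$ guarantees these level sets are compact'' is not literally true for that part. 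The fix is immediate: the superlevel sets of $u=g_\mp^2$ have boundaries of the form $f^{-1}(c)$ with $c\neq 0$ except for a single exceptional level ($c=0$, where the level set meets the complement of $\mathrm{supp}\,f$), and that single level has measure zero in the coarea integral, while all other a.e.\ (regular, by Sard) levels give compact smooth $1$-manifolds of finite length; the superlevel areas stay $\leq\frac{1}{2}\mathrm{Area}(X)$ by the median property, so the Cheeger comparison still applies. Second, a regular level set generally splits $X$ into \emph{many} connected components, which you implicitly group into the two sides $\{u>t\}$ and $\{u<t\}$; this matches the paper's definition of $h(X)$ provided one reads ``two disjoint components $X_1$, $X_2$'' as the two (not necessarily connected) complementary pieces, which is the standard reading (and the one under which the paper's identity $h(X)=\inf_{\alpha\in\mathcal{S}(X)}\mathcal{H}(\alpha)$ is quoted); if one insisted on connected pieces, a routine but nonempty grouping lemma would be needed here. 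With these clarifications your proof is complete and is the intended one.
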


For inverse Cheeger's inequality, Buser proved that
\begin{theorem}[\textit{\cite[Theorem 7.1]{Bu82}\label{t-Bu}}]
If $X$ is a complete, non-compact hyperbolic surface, then
$$\textnormal{RayQ}(X)\leq c\cdot h(X)$$
where $c>0$ is a universal constant.
\end{theorem}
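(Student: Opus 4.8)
The plan is to prove this inverse Cheeger inequality by exhibiting, for a near-optimal separating set, a single mean-zero test function whose Rayleigh quotient is controlled by $h(X)$; since $\mathrm{RayQ}(X)$ is an infimum of such quotients, one good witness suffices. First I would fix $\epsilon>0$ and, by the definition of the Cheeger constant, choose a one-dimensional set $E$ dividing $X$ into $X_1\sqcup X_2$ with $\mathrm{Area}(X_1)\leq \mathrm{Area}(X_2)$ and $\ell(E)\leq (h(X)+\epsilon)\,\mathrm{Area}(X_1)$. Replacing $E$ by the geodesic representative of the separating multicurve only decreases its length, so I may assume $E$ is a union of simple closed geodesics; this is what makes the equidistant curves tractable.

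Next I would build the test function as a function of the signed distance $\rho(x)$ to $E$ (positive on $X_1$, negative on $X_2$): set $u=g(\rho)$ for a Lipschitz profile $g$ that equals suitable constants outside a collar of bounded width $w$ and ramps across it, with the two constants fixed to enforce $\int_X u=0$. Then $\abs{\nabla u}=\abs{g'(\rho)}$ almost everywhere because $\abs{\nabla\rho}=1$, and the coarea formula converts both $\int_X\abs{\nabla u}^2$ and $\int_X u^2$ into one-dimensional integrals against the level-set length $t\mapsto \ell(\{\rho=t\})$. The key geometric input is that for a geodesic multicurve the equidistant curve at distance $t$ has length at most $\ell(E)\cosh t$, so the numerator is bounded by $\ell(E)$ times a constant depending only on the profile, while the denominator is comparable to $\mathrm{Area}(X_1)=\min\{\mathrm{Area}(X_1),\mathrm{Area}(X_2)\}$ once the constants are fixed. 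Choosing $w$ of order one and optimizing $g$ yields $\int_X\abs{\nabla u}^2\big/\int_X u^2\leq C\,\ell(E)/\mathrm{Area}(X_1)\leq C(h(X)+\epsilon)$, and letting $\epsilon\to 0$ gives $\mathrm{RayQ}(X)\leq C\,h(X)$. To promote this to the clean linear form with a universal constant (rather than the generic $h+h^2$ Buser bound), I would invoke the a priori estimate $h(X)\leq 1$, which follows from combining Cheeger's inequality (Lemma \ref{l-ch}) with the elementary fact that $\mathrm{RayQ}(X)\leq\tfrac14$ for any finite-area hyperbolic surface; this absorbs the quadratic contribution.

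The main obstacle I anticipate is controlling the geometry inside the cusps, which is absent in Buser's original compact setting. I must verify that the equidistant level sets entering the cusp regions still have at most exponentially growing length, that the chosen profile keeps $u\in L^2(X)$ with all integrals convergent, and that the near-optimal Cheeger set can be taken with geodesic (hence controllable) equidistant curves. A profile that is genuinely constant outside a bounded collar defuses most of this, since the constant pieces contribute only bounded integrals over the finite-area regions $X_1$ and $X_2$; the remaining delicate point is the uniform upper bound $\ell(\{\rho=t\})\leq\ell(E)\cosh t$ across the full collar, which is exactly where the hyperbolicity of $X$ and the geodesic choice of $E$ are used.
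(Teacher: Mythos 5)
The paper offers no proof of this statement---Theorem \ref{t-Bu} is quoted verbatim from Buser \cite[Theorem 7.1]{Bu82}---so your proposal must stand on its own, and its overall scheme (a two-constant profile of the signed distance to a dividing set, the coarea formula, $\cosh t$ growth of equidistant sets, and absorbing the quadratic term via $h(X)\leq 1$) is indeed the standard route and is essentially viable. However, there is one genuine gap: the reduction ``replace $E$ by the geodesic representative of the separating multicurve, which only decreases length.'' First, components of a near-optimal Cheeger set need not have geodesic representatives at all: by Theorem \ref{t-AM}, on a cusped surface the minimizers can consist of \emph{horocycles}, which are peripheral, and a near-minimizer could a priori also contain null-homotopic pieces; for such curves your key level-set bound fails (for a metric circle of radius $r$ the equidistant set at distance $t$ has length $2\pi\sinh(r+t)$, which is not $O\left(\ell(E)\cosh t\right)$ when $r$ is small). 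Second, even when every component is essential, cutting along the geodesic representative changes the complementary areas: Gauss--Bonnet pins the geodesic pieces to area exactly $2\pi\abs{\chi_i}$, which can be far smaller than the original $\min\{\textnormal{Area}(X_1),\textnormal{Area}(X_2)\}$, so $\ell(E')\leq\ell(E)$ alone does not give $\mathcal{H}(E')\leq C\left(h(X)+\epsilon\right)$; your witness could have ratio larger than $h(X)$ by a factor comparable to the total area.

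The repair uses exactly the tools the paper has assembled in Subsection \ref{gch}. Either take an actual perimeter minimizer from Theorem \ref{t-AM}, whose components are horocycles, geodesics, or curves equidistant from geodesics---in all three cases $\ell(\{\rho=t\})\leq 2\,\ell(E)\cosh t$, since $e^{t}\leq 2\cosh t$ handles the horocyclic pieces---or instead pass to the geodesic Cheeger constant and run your test-function estimate on the multicurve realizing $H(X)$, then convert using Proposition \ref{lh-mirz}: $h\geq H/(1+H)$ gives $H\leq h/(1-h)\leq 2h$ once $h\leq\tfrac12$. The regime where $h$ is bounded below is disposed of by your own observation $\mathrm{RayQ}(X)\leq\tfrac14$ (from the continuous spectrum $[\tfrac14,\infty)$, using finite area so that constants lie in $L^2$---note the statement should be read with finite area, as in the paper's setting), which also yields $h(X)\leq 1$ via Lemma \ref{l-ch} and legitimizes absorbing the $h^2$ term. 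With this substitution the remaining steps are sound: $\abs{\nabla\rho}\leq 1$ a.e.\ suffices in the coarea computation, the bound $\ell(\{\rho=t\})\leq\ell(E)\cosh t$ for geodesic $E$ follows from the normal exponential map for \emph{all} $t$ without any embeddedness of the collar, and the mean-zero two-constant profile with collar width $w\asymp 1$ (chosen so the collar area is at most half of $\textnormal{Area}(X_1)$, using the smallness of $h$) gives the linear bound with a universal constant.
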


\subsection{Geodesic Cheeger constant}\label{gch}
 Adams and Morgan investigated the isoperimetric problem on hyperbolic surfaces in \cite{AM99}. The following result is about the characterization of perimeter minimizers for cusped hyperbolic surfaces.
\begin{theorem}[\textit{\cite[Theorem 2.2]{AM99}}]\label{t-AM}
Let $X$ be a complete non-compact hyperbolic surface of finite area. For given area $0<A<\textnormal{area}(X)$, a perimeter-minimizing system of embedded rectifiable curves bounding a region $R$ of area $A$ consists of a set of curves of one of the following types:
\begin{enumerate}
\item horocycles around cusps;
\item geodesics or single ``neighboring curves" at constant distance from a geodesic.
\end{enumerate}
The total perimeter $L(A)$ of a minimizer of area $A$ satisfies that $L(A)\leq A$, and if $A<\pi$, then a minimizer consists of neighborhoods of an arbitrary collection of cusps bounded by horocycles, of total length $A$.
\end{theorem}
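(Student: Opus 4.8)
The plan is to treat this as a constrained isoperimetric problem on $X$ and to run the standard geometric-measure-theory program --- existence, regularity, first variation, classification --- and then close with a sharp length--area comparison in the small-area regime. Throughout, the governing structural facts are that $X$ has constant curvature $K\equiv-1$ and only finitely many cusps.

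First I would produce a perimeter-minimizing set $R$ of area $A$. Because $X$ is non-compact this is \emph{not} automatic: a minimizing sequence could a priori lose area and perimeter far down a cusp. I would fix the finitely many cusps, truncate them at deep horocycles, and run a concentration-compactness argument in the space of sets of finite perimeter; the finiteness of $\area(X)$ prevents genuine loss of mass to infinity, and the explicit optimality of horocyclic slices inside a single cusp controls boundary oscillation near a cusp tip. Two-dimensional regularity theory then shows that $\partial R$ is a finite disjoint union of smooth embedded closed curves. Computing the first variation of length under area-preserving normal perturbations forces every component of $\partial R$ to have the \emph{same} constant geodesic curvature $\kappa$, the Lagrange multiplier. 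Lifting to the universal cover $\mathbb{H}$, curves of constant geodesic curvature are classified exhaustively: $\kappa=0$ gives geodesics, $0<\kappa<1$ gives equidistant ``neighboring'' curves about a geodesic, $\kappa=1$ gives horocycles, and $\kappa>1$ gives metric circles bounding disks. Projecting to $X$, each boundary curve is thus a closed geodesic, a single neighboring curve at fixed distance from a closed geodesic, a horocycle around a cusp, or a circle bounding an embedded disk, which is already the list (1)--(2) once disks are ruled out.

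To exclude disks and to obtain the perimeter bound I would invoke Gauss--Bonnet: for such an $R$, using $K\equiv-1$,
\[
\kappa\,L(A)=\int_{\partial R}k_g\,ds=2\pi\chi(R)+A.
\]
A union of disks has $\chi(R)\geq1$, forcing $\kappa L>A$, whereas a horocyclic neighborhood of any cusp has $\chi=0$ and $\kappa=1$, hence $L=A$ (area equals horocycle length in a cusp); the comparison shows disks are never optimal, so $\kappa\le1$ and $L(A)\leq A$. The bound $L(A)\le A$ for \emph{all} admissible $A$ then follows since $R$ and its complement share the same boundary, giving $L(A)=L(\area(X)-A)$. For the regime $A<\pi$ I would rule out every competitor carrying a geodesic or hypercycle component by the explicit relations: a geodesic disk of area $A$ has perimeter $\sqrt{A^{2}+4\pi A}$, a collar of a closed geodesic of length $\ell$ and area $A$ has perimeter $\sqrt{A^{2}+4\ell^{2}}$, while a horocyclic cusp region of area $A$ has perimeter exactly $A$. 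Since the horocyclic competitor is strictly shorter whenever it can be realized embedded, a non-horocyclic curve can enter a minimizer only once purely horocyclic regions of area $A$ cease to be available; the critical area at which this happens is precisely $\pi$, so for $A<\pi$ the minimizer is a disjoint union of horocyclic neighborhoods of an arbitrary subcollection of cusps whose total length, equal to the total area, is $A$.

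I expect the \textbf{main obstacle} to be two-fold. First is the non-compact existence and regularity step, where one must rigorously preclude the boundary from oscillating or draining into the cusps while certifying that the limit is a genuine area-$A$ minimizer; this is where the finite-area cusped geometry, rather than the classical Euclidean theory, must be used. Second is the sharp exclusion near $A=\pi$: the competition between horocyclic regions and regions built from short closed geodesics together with their equidistant curves must be decided exactly, and since the length--area profiles $A$, $\sqrt{A^{2}+4\pi A}$ and $\sqrt{A^{2}+4\ell^{2}}$ are mutually close in this range, the threshold value $\pi$ emerges only from the precise comparison and a careful accounting of which horocyclic configurations remain embedded.
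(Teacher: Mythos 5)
The paper itself contains no proof of this statement: it is quoted verbatim from Adams--Morgan \cite[Theorem 2.2]{AM99}, so your sketch must be measured against the original argument, whose overall skeleton (existence and regularity from geometric measure theory, first variation giving one common constant geodesic curvature $\kappa$ on all boundary components, classification of constant-curvature curves in $\mathbb{H}^2$, Gauss--Bonnet) you reproduce correctly. However, two of your closing steps contain genuine gaps. First, the inference ``disks are never optimal, so $\kappa\le 1$ and $L(A)\le A$'' runs Gauss--Bonnet in the wrong direction: from $\kappa L=2\pi\chi(R)+A$ with $\chi(R)\le 0$ and $\kappa\le 1$ you only bound $\kappa L\le A$, hence $L\le A/\kappa$, which is \emph{weaker} than $L\le A$; indeed a collar bounded by two hypercycles at distance $d$ from a closed geodesic has $\kappa=\tanh d<1$ and $L=A\coth d>A$, so no pointwise curvature bound on the minimizer yields the claim. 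The correct route is through the isoperimetric profile: once circles are excluded, $|L'(A)|=|\kappa(A)|\le 1$ almost everywhere, so $L$ is $1$-Lipschitz, and since deep horocyclic cusp regions give $L(A)\to 0$ as $A\to 0^+$, one gets $L(A)\le A$ for every $A$ (your symmetry remark $L(A)=L(\mathrm{area}(X)-A)$ does not substitute for this, as you have no bound on either half of the range).

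Second, your explanation of the threshold $\pi$ is wrong: it is not ``the critical area at which purely horocyclic regions cease to be available.'' That availability threshold is surface-dependent (the standard guarantee is an embedded cusp region of area $2$ per cusp, a constant unrelated to $\pi$), and your three profile functions $A$, $\sqrt{A^2+4\pi A}$, $\sqrt{A^2+4\ell^2}$ never cross $A$, so comparing them produces no finite threshold at all. The constant $\pi$ actually emerges from Gauss--Bonnet together with the \emph{integrality} of the Euler characteristic: for a minimizer, $\kappa L=2\pi\chi(R)+A$ with $|\kappa|\le 1$, so if $\chi(R)\le -1$ then $L\ge 2\pi|\chi(R)|-A\ge 2\pi-A$, which exceeds $A$ (contradicting $L(A)\le A$) \emph{precisely when} $A<\pi$; the case $\chi(R)\ge 1$ is ruled out with the circles. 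Hence $\chi(R)=0$, and here one must use the structural fact you state but never exploit --- the Lagrange multiplier forces the \emph{same} $\kappa$ on every component, so horocycles ($\kappa=1$) cannot coexist with hypercycles ($\kappa<1$): either all components are cusp neighborhoods with $L=A$, or all are collars with $L=A\coth d>A\ge L(A)$, which is non-minimizing. This dichotomy, not a length--area race near $A=\pi$, is what pins down the horocyclic minimizers and shows any collection of cusps of total boundary length $A$ works.
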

\noindent From Theorem \ref{t-AM}, we have $$h(X)=\inf\limits_{0<A\leq \frac{\textnormal{Area}(X)}{2}}\frac{L(A)}{A},$$
and $$L(A)=A \text{ for all } 0<A<\pi.$$
Hence the infimum above is realized at some value in $\left[\frac{\pi}{2}, \frac{\textnormal{Area}(X)}{2}\right]$. Denote by
$$\mathcal{S}(X)=\left(\alpha=\bigcup\limits_{i=1}^k\alpha_i;\
\begin{matrix}\text{For all }1\leq i\leq k,\ \alpha_i\text{ is a simple closed curve in $X$, }\\ \text{and }X\setminus\alpha=X_1\cup X_2,\text{ where $X_1$ and $X_2$ are}\\ \text{two disjoint connected components.} \end{matrix}\right)$$
and
$$\mathcal{SG}(X)=\left(\alpha=\bigsqcup\limits_{i=1}^k\alpha_i;\
\begin{matrix}\text{For all }1\leq i\leq k,\ \alpha_i\text{ is a simple closed geodesic in $X$, }\\ \text{and }X\setminus\alpha=X_1\cup X_2,\text{ where $X_1$ and $X_2$ are}\\ \text{two disjoint connected components.} \end{matrix}\right).$$
For any $\alpha\in\mathcal{S}(X)$ such that $X\setminus\alpha=X_1\cup X_2$, denote by 
\be\label{def-h}
\mathcal{H}(\alpha)=\frac{\ell(\alpha)}{\min\{\textnormal{Area}(X_1),\textnormal{Area}(X_2)\}}
\ene
where $\ell(\alpha)$ is the hyperbolic length of $\alpha$ and $\textnormal{Area}(X_i)$ is the area of $X_i$ for $i=1,2$. Due to S. T. Yau (see \eg \cite[Theorem 8.3.6]{Buser-book}) and Theorem \ref{t-AM}, one may check that
\begin{align}\label{dh-1}
h(X)=\inf\limits_{\alpha\in\mathcal{S}(X)}\mathcal{H}(\alpha).
\end{align}
Mirzakhani defined \emph{the geodesic Cheeger constant} in \cite{Mirz13} as
\[ H(X)\overset{\textnormal{def}}{=}\min\limits_{\alpha\in\mathcal{SG}(X)}\mathcal{H}(\alpha). \]
Based on Theorem \ref{t-AM}, Mirzakhani \cite{Mirz13} observed the following connection between $H(X)$ and $h(X)$. Assume $\alpha=\bigcup_{i=1}^k \alpha_i\in\mathcal{S}(X)$ and $\alpha'=\bigcup_{i=1}^k \alpha_i'\in\mathcal{SG}(X)$ such that $\alpha_i$ is a neighboring curve at a constant distance from a simple closed geodesic $\alpha_i'$ for all $1\leq i\leq k$. Then a simple computation yields that
\begin{align}\label{dh-3}
\mathcal{H}(\alpha)\geq\frac{\mathcal{H}(\alpha')}{1+\mathcal{H}(\alpha')}.
\end{align}
This implies that
\begin{proposition}[\textit{\cite[Proposition 4.7]{Mirz13}}]\label{lh-mirz}
Let $X$ be a complete non-compact hyperbolic surface of finite area, then
$$\frac{H(X)}{1+H(X)}\leq h(X)\leq H(X).$$
\end{proposition}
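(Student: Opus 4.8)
The plan is to establish the two inequalities separately: the upper bound $h(X)\le H(X)$ is essentially definitional, while the lower bound $\frac{H(X)}{1+H(X)}\le h(X)$ will follow by feeding the perimeter-minimizing configuration supplied by the Adams--Morgan Theorem~\ref{t-AM} into the pointwise estimate~\eqref{dh-3}. For the upper bound I would simply note that any separating simple closed geodesic multicurve is in particular a separating system of simple closed curves, so $\mathcal{SG}(X)\subseteq\mathcal{S}(X)$. Since by~\eqref{dh-1} the constant $h(X)$ is the infimum of $\mathcal{H}$ over the larger family $\mathcal{S}(X)$, whereas $H(X)$ is the minimum of the same functional over $\mathcal{SG}(X)$, restricting to the smaller family can only increase the value; hence $h(X)\le H(X)$.

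For the lower bound I would begin from the observation, recorded in the discussion preceding~\eqref{dh-1}, that $h(X)$ is realized by a perimeter-minimizing system $\alpha=\bigcup_i\alpha_i\in\mathcal{S}(X)$ whose enclosed region has area in $[\tfrac{\pi}{2},\tfrac{1}{2}\textnormal{Area}(X)]$. By Theorem~\ref{t-AM} the components of $\alpha$ are horocycles about cusps together with curves lying at constant distance from simple closed geodesics. Assuming first that no horocyclic component occurs, I would replace each neighboring curve $\alpha_i$ by its core geodesic $\alpha_i'$. As $\alpha_i$ is isotopic to $\alpha_i'$, the multicurve $\alpha'=\bigcup_i\alpha_i'$ separates $X$ in the same way as $\alpha$, so $\alpha'\in\mathcal{SG}(X)$ and therefore $\mathcal{H}(\alpha')\ge H(X)$. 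Estimate~\eqref{dh-3} then yields
\[
h(X)=\mathcal{H}(\alpha)\ \ge\ \frac{\mathcal{H}(\alpha')}{1+\mathcal{H}(\alpha')}\ \ge\ \frac{H(X)}{1+H(X)},
\]
where the last step uses that $t\mapsto \frac{t}{1+t}$ is increasing on $[0,\infty)$ together with $\mathcal{H}(\alpha')\ge H(X)$. This is exactly the asserted bound.

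The step I expect to be the crux is justifying that horocyclic components may be discarded, since a horocycle is peripheral and carries no geodesic core to which~\eqref{dh-3} can be applied. I would handle this by noting that a horocycle of length $L$ bounds a cusp neighbourhood of area exactly $L$, so each horocyclic component contributes to $\mathcal{H}(\alpha)$ with local ratio $1\ge \frac{H(X)}{1+H(X)}$: either the minimizer is purely horocyclic, in which case $h(X)=1$ and the inequality is automatic, or I would remove the horocyclic pieces, merge the capped cusp neighbourhoods into the appropriate side, and run the same length--area bookkeeping underlying~\eqref{dh-3} on the remaining neighboring curves, the horocyclic contributions only improving the inequality. Making this bookkeeping precise in the mixed case is the main technical point; once it is in place, the computation of the preceding paragraph closes the argument.
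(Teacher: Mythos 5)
Your proposal is correct and follows essentially the same route as the paper, which likewise obtains the upper bound from the inclusion $\mathcal{SG}(X)\subseteq\mathcal{S}(X)$ together with \eqref{dh-1}, and the lower bound by feeding the Adams--Morgan minimizers of Theorem \ref{t-AM} into the neighboring-curve estimate \eqref{dh-3}. The horocycle bookkeeping you flag as the crux does close in two lines --- a horocycle adds its length $x$ to the numerator and at most $x$ to the denominator of $\mathcal{H}(\alpha)$, and $(N+x)/(D+x)\geq c$ whenever $N/D\geq c$ and $c=\frac{H(X)}{1+H(X)}<1$ --- a point the paper itself glosses over by citing Mirzakhani and remarking that the argument extends to the cusped case.
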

\begin{rem*}
Mirzakhani stated \cite[Proposition 4.7]{Mirz13} only for closed hyperbolic surfaces. Actually its proof also leads to the case of cusped hyperbolic surfaces.
\end{rem*}

\section{WP volume}\label{sec-wp}
In this section, we first recall several estimates about the Weil-Petersson volume $V_{g,n}$ of moduli space $\mathcal{M}_{g,n}$. Then we provide certain useful bounds on $V_{g,n}$ which will be applied in the proofs of Theorem \ref{mt-1}, \ref{mt-3} and \ref{mt-4}.

\subsection{Asymptotic properties on $V_{g,n}$}
In this subsection, we first recall several results about $V_{g,n}$, and then make two useful estimations. One may also see \cite{Penner92, ST01, Mirz07a, Mirz13, MZ15, AM21,Agg20} and references therein for the asymptotic behavior of $V_{g,n}$ and its deep connection to the intersection theory of $\M_{g,n}$.

We always let $V_{0,3}(x,y,z)=1$ (see \cite[Table $1$]{Mirz07a}). The following lemma is due to Mirzakhani.

\begin{lemma}[\textit{\cite[Lemma 3.2]{Mirz13}}]\label{l-mirz}
For any $g,n\geq 0$ with $2g-2+n>0$, we have
\begin{enumerate}
  \item
\begin{equation*}
V_{g,n} \leq V_{g,n}(x_1,\cdots,x_n) \leq e^{\frac{x_1+\cdots+x_n}{2}} V_{g,n},
\end{equation*}

  \item
\begin{equation*}
V_{g-1,n+4} \leq V_{g,n+2}
\end{equation*}
and
\begin{equation*}
\frac{V_{g,n+1}}{(2g-2+n)V_{g,n}}\asymp 1.
\end{equation*}
\end{enumerate}
\end{lemma}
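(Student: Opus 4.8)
The engine behind all three statements is Mirzakhani's structure theory for the volume polynomials, so my plan is to reduce everything to two inputs: first, that $V_{g,n}(x_1,\dots,x_n)$ is a polynomial in $x_1^2,\dots,x_n^2$ with non-negative coefficients and constant term $V_{g,n}$ (from the intersection-theoretic formula for $V_{g,n}(\mathbf{x})$); and second, Mirzakhani's topological recursion for the $V_{g,n}(\mathbf{x})$ together with the positivity of every term appearing in it. Part (1) will follow from the first input plus one quantitative bound on the coefficients, while both statements in Part (2) will follow from the recursion, the string and dilaton relations, and sharp control of those same coefficients.

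For Part (1), the lower bound is immediate: writing $V_{g,n}(\mathbf{x})=\sum_{\alpha}c_\alpha\prod_i x_i^{2\alpha_i}$ with all $c_\alpha\ge 0$ and $c_{\mathbf{0}}=V_{g,n}$, every term is non-negative, so $V_{g,n}(\mathbf{x})\ge V_{g,n}$ for $\mathbf{x}\ge\mathbf{0}$. For the upper bound I would pass to the normalized basis
$$V_{g,n}(\mathbf{x})=\sum_{|\alpha|\le 3g-3+n}[\tau_\alpha]_{g,n}\prod_{i=1}^n\frac{(x_i/2)^{2\alpha_i}}{(2\alpha_i+1)!},$$
in which the coefficients $[\tau_\alpha]_{g,n}$ are normalized intersection numbers with $[\tau_{\mathbf{0}}]_{g,n}=V_{g,n}$. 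The one nontrivial ingredient is the estimate $0\le[\tau_\alpha]_{g,n}\le V_{g,n}$, which is Mirzakhani's monotonicity of the normalized intersection numbers (proved via their own recursion). Granting it, termwise domination yields
$$\frac{V_{g,n}(\mathbf{x})}{V_{g,n}}\le\prod_{i=1}^n\sum_{k\ge 0}\frac{(x_i/2)^{2k}}{(2k+1)!}=\prod_{i=1}^n\frac{\sinh(x_i/2)}{x_i/2}\le\prod_{i=1}^n e^{x_i/2}=e^{(x_1+\cdots+x_n)/2},$$
where the last step uses $\sinh t\le t\,e^{t}$ for $t\ge 0$; this is exactly the claimed bound.

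The first inequality in Part (2) rests on the topological fact that cutting $S_{g,n+2}$ along a single non-separating simple closed curve $\gamma$ produces $S_{g-1,n+4}$: the genus drops by one and two new boundary components appear. Via Mirzakhani's Integration Formula (Theorem \ref{Mirz-for}), this identifies a $\gamma$-average over $\mathcal{M}_{g,n+2}$ with an explicit positive integral, over the length $t$ of $\gamma$, of $V_{g-1,n+4}(t,t,\mathbf{0})$, which by (1) is comparable to $V_{g-1,n+4}$ up to the factor $e^{t}$. The plan is to combine this identity with Mirzakhani's sharp volume estimates to conclude $V_{g-1,n+4}\le V_{g,n+2}$. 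I expect this to be the main obstacle of the whole lemma: the two volumes are asymptotically equal as $g\to\infty$ (their ratio tends to $1$), so the inequality is essentially sharp and cannot be obtained by any comparison that discards multiplicative constants; one must track the recursion with full precision.

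Finally, for $V_{g,n+1}\asymp(2g-2+n)V_{g,n}$, both inequalities come from Mirzakhani's recursion for $V_{g,n+1}$ with all boundary lengths set to zero, every term of which is a positive integral of a lower-complexity volume polynomial against an explicit, exponentially decaying kernel. A convenient way to package the upper bound is the dilaton relation, of the form $\frac{\partial V_{g,n+1}}{\partial L_{n+1}}(\mathbf{L},2\pi i)=2\pi i\,(2g-2+n)V_{g,n}(\mathbf{L})$, which ties $(2g-2+n)V_{g,n}$ to the coefficients of $V_{g,n+1}$. The essential difficulty is that the factor of size $2g-2+n$ is invisible at the level of constant terms: it is produced by the top-degree part of a volume polynomial whose degree grows linearly in $g$, integrated against the kernel. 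Hence the crude monotonicity from (1) is too lossy here, and one instead needs Mirzakhani's sharp, uniform-in-degree control of the ratios of consecutive coefficients of $V_{g,n}(\mathbf{L})$ (equivalently, of the normalized intersection numbers). I expect this uniform coefficient analysis to be the common technical core underlying both parts of (2).
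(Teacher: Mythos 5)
Your Part (1) is correct and is precisely the argument behind the cited result: nonnegativity of the coefficients of the volume polynomial gives the lower bound, and the coefficient bound $\left[\prod_{i}\tau_{d_i}\right]_{g,n}\leq V_{g,n}$ (recorded as \eqref{e-ap-2} in this paper, from \cite{MZ15}) combined with $\sum_{k\geq 0}t^{2k}/(2k+1)!=\sinh(t)/t\leq e^{t}$ gives the upper bound; note the paper itself offers no proof, as the lemma is quoted verbatim from \cite{Mirz13}. The genuine gap is the inequality $V_{g-1,n+4}\leq V_{g,n+2}$, which you do not prove, and your proposed route cannot be repaired. Averaging over $\mathcal{M}_{g,n+2}$ via Theorem \ref{Mirz-for} necessarily pays either for the counting function of short non-separating geodesics (which can be as large as $3g+n-1$ by the collar lemma, so one only gets $V_{g-1,n+4}\prec g\,V_{g,n+2}$) or for kernel constants strictly larger than $1$; but, as you observe yourself, for fixed $n$ both volumes equal $\frac{C}{\sqrt g}(2g+n-1)!\,(4\pi^2)^{2g+n-1}\left(1+O\left(\frac{1}{g}\right)\right)$ by Theorem \ref{t1-MZ}, so their ratio tends to $1$ and there is zero room to lose a multiplicative constant. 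Moreover, your fallback of invoking ``Mirzakhani's sharp volume estimates'' is circular in spirit --- those asymptotics are themselves derived using Lemma \ref{l-mirz}-type inequalities --- and in any case an asymptotic with an unspecified $O(1/g)$ error can never yield an exact inequality valid for all $g,n$. The actual proof is algebraic: an exact termwise comparison of intersection numbers inside Mirzakhani's recursion (in the same spirit in which \eqref{e-ap-2} is obtained by ``comparing the coefficients''), with all constants tracked exactly. Nothing of this kind appears in your write-up, so this part of the lemma remains unproved.

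For $V_{g,n+1}\asymp(2g-2+n)V_{g,n}$ you pick the correct tool (the do--Norbury dilaton relation), but you stop before the decisive step, and your diagnosis of the difficulty is misplaced: the factor $2g-2+n$ is not ``produced by the top-degree part'' of the polynomial --- it sits explicitly on the right-hand side of the dilaton relation. Setting the remaining boundary lengths to zero, that relation becomes the identity $\frac{(2g-2+n)V_{g,n}}{V_{g,n+1}}=\frac{1}{2}\sum_{m\geq 1}(-1)^{m-1}b_m c_m(g,n)$ with $b_m=\frac{m\pi^{2m-2}}{(2m+1)!}$ and $c_m(g,n)=[\tau_m\tau_0^{n}]_{g,n+1}/V_{g,n+1}$, and what is then required is not ``uniform-in-degree control of ratios of consecutive coefficients'' but three low-order facts: $0\leq c_m\leq c_0=1$, the monotonicity $c_{m+1}\leq c_m$, and the lower bound $c_1\geq\frac{1}{2}$, fed into an alternating-series (Abel summation) estimate producing two-sided positive constants. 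This is exactly the computation this paper replays in the proof of Proposition \ref{p-est}. Without the input $c_1\geq\frac{1}{2}$ your sketch can only bound the alternating sum from above, i.e.\ it yields $(2g-2+n)V_{g,n}\prec V_{g,n+1}$ but not the complementary inequality, so the $\asymp$ statement is also incomplete as written.
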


The following two results are due to Mirzakhani-Zograf.
\begin{lemma}[\textit{\cite[Lemma 5.1]{MZ15}}]\label{l-MZ}
There exists a universal constant $c_2>0$ such that for any $g,n\geq 0$ with $2g-2+n\geq 1$, the following inequality holds:
$$\left|\frac{(2g-2+n)V_{g,n}}{V_{g,n+1}}-\frac{1}{4\pi^2}\right|\leq \frac{c_2n}{2g-2+n}.$$
\end{lemma}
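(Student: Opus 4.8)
The plan is to bootstrap from the exact structure of the one-variable restriction of the volume polynomial together with the two functional equations it satisfies. Write
\[
V_{g,n+1}(0,\dots,0,L)=\sum_{k=0}^{3g-2+n} a_k\, L^{2k},
\]
so that $a_0=V_{g,n+1}$ and every $a_k\ge 0$ (the coefficients are positive multiples of $\psi$--$\kappa_1$ intersection numbers on $\overline{\M}_{g,n+1}$). First I would feed this into the string and dilaton equations for Weil--Petersson volumes (Mirzakhani \cite{Mirz07a, Mirz13}). Evaluating the string equation at $L_1=\dots=L_n=0$ gives $V_{g,n+1}(0,\dots,0,2\pi i)=0$, i.e. $\sum_{k\ge 0} a_k(-4\pi^2)^k=0$, while the dilaton equation, differentiated in the last variable and evaluated at $2\pi i$, gives $\sum_{k\ge 1} 2k\, a_k(-4\pi^2)^k=-4\pi^2(2g-2+n)V_{g,n}$. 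Setting $b_k\colonequals(-1)^{k-1}(4\pi^2)^k a_k$, these two identities become the exact formulas
\[
V_{g,n+1}=\sum_{k\ge 1} b_k,
\qquad
4\pi^2(2g-2+n)V_{g,n}=\sum_{k\ge 1} 2k\, b_k.
\]

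Consequently
\[
\frac{(2g-2+n)V_{g,n}}{V_{g,n+1}}=\frac{1}{4\pi^2}\cdot\frac{\sum_{k\ge1}2k\,b_k}{\sum_{k\ge1}b_k},
\]
and the lemma is reduced to showing that the signed weighted mean $\langle k\rangle\colonequals\big(\sum_{k}k\,b_k\big)/\big(\sum_k b_k\big)$ satisfies $\langle k\rangle=\tfrac12+O\!\big(\tfrac{n}{2g-2+n}\big)$. The key input is a uniform estimate on consecutive coefficients, namely $c_{k+1}/c_k=1+O\!\big(\tfrac{n}{2g-2+n}\big)$ where $c_k\colonequals(4\pi^2)^k a_k$; equivalently $a_{k+1}/a_k$ is close to $\tfrac{1}{4\pi^2}$. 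This is exactly the type of ratio bound for coefficients of WP volume polynomials established from Mirzakhani's recursion and from monotonicity of intersection numbers in \cite{Mirz13, MZ15}. Granting it, the alternating sums $\sum(-1)^{k-1}c_k$ and $\sum(-1)^{k-1}k\,c_k$ behave like a near-geometric series of ratio $\rho=1-O\!\big(\tfrac{n}{2g-2+n}\big)$, for which a direct summation gives $\langle k\rangle\approx\tfrac{1}{1+\rho}=\tfrac12+O\!\big(\tfrac{n}{2g-2+n}\big)$; as a sanity check the case $(g,n)=(1,1)$ yields $c_2/c_1=\tfrac14$, $\langle k\rangle=\tfrac23$, and reproduces $V_{1,2}=3\pi^2\,V_{1,1}$, consistent with the asymptotic not yet having set in for $2g-2+n=1$.

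The main obstacle is controlling the two sums despite the alternating signs: since the $b_k$ change sign, crude termwise bounds on the $a_k$ are far too lossy, and one must genuinely exploit cancellation. Concretely, I would show that the normalized coefficients $c_k$ track a single geometric profile of ratio $1-O\!\big(\tfrac{n}{2g-2+n}\big)$ across the whole range $1\le k\le 3g-2+n$, and then sum the resulting alternating series while carefully tracking how the per-step defect accumulates into the global error. Propagating the error so that it comes out proportional to $n$ --- and not merely to a constant --- matching the stated bound $\tfrac{c_2 n}{2g-2+n}$, is the delicate point; it also forces one to treat the top-degree boundary terms $k\approx 3g-2+n$ separately, since the geometric approximation (and the finite truncation of the series) is weakest precisely there.
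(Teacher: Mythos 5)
Your reduction step is correct: the two evaluations of $V_{g,n+1}(0,\dots,0,L)$ at $L=2\pi i$ (Do's string/dilaton identities) are valid, and after unwinding $a_k=[\tau_k\tau_0^{n}]_{g,n+1}/\bigl(4^k(2k+1)!\bigr)$ they are exactly equivalent to Mirzakhani's recursion identity
$\frac{(2g-2+n)V_{g,n}}{V_{g,n+1}}=\frac{1}{2}\sum_{m\geq 1}(-1)^{m-1}b_m c_m(g,n)$
with $b_m=\frac{m\pi^{2m-2}}{(2m+1)!}$ and $c_m(g,n)=[\tau_m\tau_0^n]_{g,n+1}/V_{g,n+1}$, which is the identity on which the proof in \cite{MZ15} rests (and which this paper recalls and reuses in the proof of Proposition \ref{p-est}). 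The genuine gap is your ``key input'': the claim $c_{k+1}/c_k=1+O\bigl(\tfrac{n}{2g-2+n}\bigr)$, equivalently $a_{k+1}/a_k\approx\tfrac{1}{4\pi^2}$, is not merely unproved but \emph{false}. In your normalization $c_k=\pi^{2k}[\tau_k\tau_0^n]_{g,n+1}/(2k+1)!$, so $c_{k+1}/c_k=\pi^2\cdot\frac{[\tau_{k+1}\tau_0^n]_{g,n+1}}{[\tau_k\tau_0^n]_{g,n+1}}\cdot\frac{1}{(2k+2)(2k+3)}$; since the intersection numbers are decreasing in $k$ (\cite{Mirz07a,Mirz13}) this ratio is at most $\frac{\pi^2}{(2k+2)(2k+3)}\leq\frac{\pi^2}{20}<\frac{1}{2}$ for every $k\geq 1$, and for fixed $k$ it converges to $\frac{\pi^2}{(2k+2)(2k+3)}$ as $g\to\infty$ because each $c_m(g,n)\to 1$. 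Your sanity check $(g,n)=(1,1)$ with $c_2/c_1=\tfrac14$ is therefore not a small-$\chi$ anomaly where ``the asymptotic has not yet set in''; it already exhibits the generic behavior. The normalized coefficients decay factorially, like $\pi^{2k}/(2k+1)!$, not like a geometric series of ratio $\rho\to 1$, so the computation $\langle k\rangle\approx\tfrac{1}{1+\rho}$ has no basis.

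What actually makes the lemma true is different and simpler: the limiting value $\langle k\rangle=\tfrac12$ comes from the sine-series identities $\sum_{k\geq1}(-1)^{k-1}\frac{\pi^{2k}}{(2k+1)!}=1$ and $\sum_{k\geq1}(-1)^{k-1}\frac{2k\,\pi^{2k}}{(2k+1)!}=1$, i.e., $\frac12\sum_{m\geq1}(-1)^{m-1}b_m=\frac{1}{4\pi^2}$. The whole problem then reduces to the \emph{termwise} estimate $0\leq 1-c_m(g,n)\prec\frac{m(m+n)}{2g-2+n}$, which is the intersection-number bound established in \cite{MZ15} (the analogue, without the restriction $n\prec\sqrt g$, of Lemma \ref{l-ap} and Lemma \ref{l-int} above); this yields
$\left|\frac{(2g-2+n)V_{g,n}}{V_{g,n+1}}-\frac{1}{4\pi^2}\right|\leq\frac12\sum_{m\geq1}b_m\bigl(1-c_m(g,n)\bigr)\prec\frac{n}{2g-2+n}$,
where the factor $n$ comes from $\sum_m b_m\,m(m+n)\prec n$ thanks to the factorial decay of $b_m$. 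In particular the two ``delicate points'' you flag --- exploiting cancellation among slowly decaying alternating terms, and treating the top degrees $k\approx 3g-2+n$ separately --- are artifacts of the wrong geometric model: with the correct factorial weights the tail is trivially negligible and no cancellation beyond the leading identity is needed. To repair your write-up, discard the geometric-profile hypothesis and substitute the termwise bound on $1-c_m(g,n)$ summed against $b_m$; that is precisely the proof of \cite[Lemma 5.1]{MZ15} that the paper cites.
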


As a direct consequence, we have
\begin{lemma}\label{l-cor-3}
 If $n(g)$ satisfies that $\lim\limits_{g\to\infty}\frac{n(g)}{g}=0,$
then we have
$$\lim\limits_{g\to\infty}\frac{V_{g,n(g)}^2}{V_{g,n(g)-1}V_{g,n(g)+1}}=1.$$
\end{lemma}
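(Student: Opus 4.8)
The plan is to reduce the statement to a single application of the Mirzakhani--Zograf estimate in Lemma \ref{l-MZ}. Write $n=n(g)$ throughout and introduce the shorthand
$$a_{g,m}\overset{\text{def}}{=}\frac{(2g-2+m)V_{g,m}}{V_{g,m+1}},$$
so that Lemma \ref{l-MZ} reads $\left|a_{g,m}-\frac{1}{4\pi^2}\right|\leq\frac{c_2 m}{2g-2+m}$. The first step is purely algebraic: since $\frac{V_{g,m}}{V_{g,m+1}}=\frac{a_{g,m}}{2g-2+m}$, I would factor the target ratio as a telescoping quotient,
$$\frac{V_{g,n}^2}{V_{g,n-1}V_{g,n+1}}=\frac{V_{g,n}/V_{g,n+1}}{V_{g,n-1}/V_{g,n}}=\frac{a_{g,n}}{a_{g,n-1}}\cdot\frac{2g-3+n}{2g-2+n},$$
applying the identity at $m=n$ and at $m=n-1$. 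This uses only $n\geq 1$ and $2g-3+n\geq 1$, which hold for all large $g$.

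Next I would show that each of the two factors tends to $1$. Under the hypothesis $\lim_{g\to\infty}n(g)/g=0$ we have $n=o(g)$, hence $2g-2+n\sim 2g$, and both error terms $\frac{c_2 n}{2g-2+n}$ and $\frac{c_2(n-1)}{2g-3+n}$ tend to $0$. By Lemma \ref{l-MZ} this forces $a_{g,n}\to\frac{1}{4\pi^2}$ and $a_{g,n-1}\to\frac{1}{4\pi^2}$. Because this common limit $\frac{1}{4\pi^2}$ is strictly positive, the quotient $\frac{a_{g,n}}{a_{g,n-1}}$ converges to $1$. The combinatorial factor $\frac{2g-3+n}{2g-2+n}$ manifestly tends to $1$, as numerator and denominator differ by $1$ while both diverge. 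Multiplying the two limits gives the claim.

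There is essentially no obstacle here, which is why the result is recorded as a direct consequence of Lemma \ref{l-MZ}. The only two points requiring a moment's care are verifying that the hypothesis $n/g\to 0$ genuinely kills the error term $\frac{c_2 n}{2g-2+n}$ (if instead $n/g$ stayed bounded away from $0$, a nonvanishing error would remain, and indeed this is precisely the regime where Question \ref{que-wpv} becomes delicate), and observing that the \emph{positivity} of the limiting constant $\frac{1}{4\pi^2}$ is what legitimizes passing to the ratio $a_{g,n}/a_{g,n-1}$ and concluding it tends to $1$. Both are immediate.
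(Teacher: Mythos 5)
Your proof is correct and is exactly the argument the paper intends: the paper records Lemma \ref{l-cor-3} without proof as a ``direct consequence'' of Lemma \ref{l-MZ}, and your telescoping factorization $\frac{V_{g,n}^2}{V_{g,n-1}V_{g,n+1}}=\frac{a_{g,n}}{a_{g,n-1}}\cdot\frac{2g-3+n}{2g-2+n}$ together with the vanishing of the error terms $\frac{c_2 n}{2g-2+n}$ under $n(g)=o(g)$ is the natural way to spell that out. Your closing remarks correctly identify the two points that matter, namely that $n/g\to 0$ kills the error term and that the positivity of the limit $\frac{1}{4\pi^2}$ justifies passing to the ratio.
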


Mirzakhani and Zograf also proved the following aysmptotic behavior of  $V_{g,n}$ for large $g$.
\begin{theorem}[\textit{\cite[Theorem 1.2]{MZ15}}] \label{t1-MZ}
There exists a universal constant $C>0$ such that for any given $n\geq0$,
\begin{equation*}
V_{g,n} =  \frac{C}{\sqrt{g}} (2g-3+n)! \cdot (4\pi^2)^{2g-3+n} \cdot \left(1+O\left(\frac{1}{g}\right) \right)
\end{equation*}
as $g\rightarrow\infty$. The implied constant is related to $n$ and independent of $g$.
\end{theorem}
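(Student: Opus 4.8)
The plan is to decouple the two genuinely different difficulties hidden in the statement: the dependence on $n$ for fixed $g$, which is already controlled by Lemma \ref{l-MZ}, and the dependence on $g$, which carries the real content --- in particular the half-integer power $g^{-1/2}$ and the universality of the leading constant $C$. I would first reduce the theorem to a single base value of $n$, and then extract the $g$-asymptotics from Mirzakhani's recursion for the volume polynomials.

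For the reduction in $n$, set
$$a_{g,n}:=\frac{V_{g,n}}{(2g-3+n)!\,(4\pi^2)^{2g-3+n}},$$
so that the assertion is exactly $\sqrt{g}\,a_{g,n}\to C$ with remainder $O(1/g)$, the constant $C$ being independent of $n$. For fixed $n$, Lemma \ref{l-MZ} gives $\frac{V_{g,n+1}}{V_{g,n}}=4\pi^2(2g-2+n)\bigl(1+O(1/g)\bigr)$; since $(2g-2+n)!=(2g-2+n)(2g-3+n)!$ and $(4\pi^2)^{2g-2+n}=4\pi^2(4\pi^2)^{2g-3+n}$, this says precisely that $\frac{a_{g,n+1}}{a_{g,n}}=1+O(1/g)$. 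Applying this a bounded number of times shows that for each fixed $n$ the quantity $a_{g,n}$ agrees with $a_{g,0}$ up to a factor $1+O(1/g)$ (the implied constant allowed to grow with $n$). Hence it suffices to establish $\sqrt{g}\,a_{g,0}\to C$ for one base value of $n$, and the \emph{same} $C$ then propagates to all $n$.

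For the base case I would bring in Mirzakhani's recursion for the volume polynomials $V_{g,n}(\boldsymbol{x})$, obtained by unfolding the integration formula of Theorem \ref{Mirz-for} along a simple closed curve. Schematically it expresses (a derivative of) $V_{g,n}$ as a sum of a \emph{non-separating} contribution built from $V_{g-1,n+2}$, a \emph{separating} contribution $\sum V_{g_1,n_1+1}V_{g_2,n_2+1}$ over all splittings of the surface into two lower-complexity pieces, and boundary contributions built from $V_{g,n-1}$. The governing heuristic is that the non-separating term dominates: the factorials and powers appearing in $a_{g,n}$ are invariant under $(g,n)\mapsto(g-1,n+2)$ (indeed $2(g-1)-3+(n+2)=2g-3+n$), so a comparison of $V_{g,n}$ with $V_{g-1,n+2}$ is exactly a comparison of $\sqrt{g}\,a_{g,n}$ with $\sqrt{g-1}\,a_{g-1,n+2}$. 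The target estimate is therefore
$$\frac{V_{g,n}}{V_{g-1,n+2}}=\sqrt{\frac{g-1}{g}}\,\bigl(1+O(1/g^2)\bigr),$$
whose two ingredients are (i) \emph{domination}: the balanced separating products $V_{g_1,n_1}V_{g_2,n_2}$ are negligible against $V_{g-1,n+2}$, which one controls by iterating the monotonicity and ratio bounds of Lemma \ref{l-mirz} and Lemma \ref{l-MZ}; and (ii) the \emph{precise first correction} $\sqrt{(g-1)/g}=1-\tfrac{1}{2g}+O(1/g^2)$ to the leading ratio. Granting such a relation, a summable remainder allows one to conclude that $\sqrt{g}\,a_{g,n}$ stabilizes to a finite limit $C$ with rate $O(1/g)$.

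The main obstacle is exactly this second-order precision. The crude tools available --- the monotonicity $V_{g-1,n+2}\le V_{g,n}$ of Lemma \ref{l-mirz} and the ratio bounds of Lemma \ref{l-MZ} --- only yield $\frac{V_{g,n}}{V_{g-1,n+2}}=1+o(1)$, which is far too weak on two counts: a per-step error of size $1/g$ is \emph{not} summable and would prevent $\sqrt{g}\,a_{g,n}$ from converging, while an error of the wrong constant would produce the wrong power of $g$ in the final answer. Pinning the correction down to $\sqrt{(g-1)/g}$ with a genuinely summable $O(1/g^2)$ remainder, and keeping all estimates uniform across the genus and puncture splittings that occur in the recursion, forces a delicate analysis of the coefficients of the volume polynomials and of the subleading terms in Mirzakhani's recursion. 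This recursion analysis --- not any of the clean reduction steps above --- is the heart of the theorem.
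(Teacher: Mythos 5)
Note first that the paper does not prove Theorem \ref{t1-MZ} at all: it is imported verbatim from Mirzakhani--Zograf \cite{MZ15}, so the only meaningful comparison is with the proof given there. Your sketch correctly reconstructs the broad architecture of that proof (normalize by $(2g-3+n)!\,(4\pi^2)^{2g-3+n}$, reduce in $n$, extract the genus asymptotics from the recursion, with the non-separating term dominating), and the fixed-$n$ reduction via Lemma \ref{l-MZ} is sound as far as it goes. But the proposal is a plan, not a proof: the entire content of the theorem is concentrated in the one estimate you label a ``target,'' namely $\frac{V_{g,n}}{V_{g-1,n+2}}=\sqrt{(g-1)/g}\,\bigl(1+O(1/g^2)\bigr)$, and this is never established. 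As you yourself observe, the tools you invoke (Lemmas \ref{l-mirz} and \ref{l-MZ}) top out at $1+O(1/g)$ precision, which is not summable; obtaining the second-order coefficient uniformly is exactly what occupies the bulk of \cite{MZ15} (their Theorem 3.1, which produces full asymptotic expansions in $1/g$ for the basic volume ratios out of the intersection-number recursion, with careful control of the subleading and separating contributions). Declaring this step ``the heart of the theorem'' names the gap; it does not fill it. There is also a circularity warning worth registering: Lemma \ref{l-MZ} \emph{is} \cite[Lemma 5.1]{MZ15}, a piece of the very paper whose main theorem you are reconstructing, so leaning on it is legitimate only to the extent that its proof there is independent of Theorem 1.2.

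Beyond the missing core estimate, the telescoping as you set it up has a structural flaw. The diagonal step $(g,n)\mapsto(g-1,n+2)$ drifts $n$ upward by $2$ per iteration, so starting from $(g,0)$ you land at a moving base point $(g_0,\,2(g-g_0))$ rather than a fixed one; your $n$-reduction (a \emph{bounded} number of applications of Lemma \ref{l-MZ} at fixed $n$) cannot be applied to it, and the lemma's error term $c_2 n/(2g-2+n)$ is of order $1$ along this diagonal. The natural repair --- compare $a_{g,0}$ with $a_{g-1,0}$ via one diagonal step plus two $n$-steps --- reintroduces a per-step error of size $O(1/g)$ from Lemma \ref{l-MZ}, which is precisely the non-summable obstruction you flagged. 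To make any version of this converge to a positive constant $C$ with rate $O(1/g)$, one needs the explicit $1/g$-coefficients of \emph{both} the diagonal ratio and the $n$-step ratio, arranged so that they combine to exactly $1-\tfrac{1}{2g}+O(1/g^2)$ per unit of genus; that two-parameter, second-order-uniform analysis of the recursion is the substance of \cite{MZ15} and is absent from the proposal.
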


\begin{rem*}
It is conjectured by Zograf in \cite{Zograf08} that $C=\frac{1}{\sqrt\pi}$.
\end{rem*}

Now we apply the results above to get several useful new bounds on $V_{g,n}$. Our first one is as follows.
\begin{lemma}\label{l-cor-1}
For any non-negative integers $g,n$ such that $2g-2+n\geq 2$,
$$V_{g,n}\prec\frac{(2g-3+n)! \cdot (4\pi^2)^{2g-3+n}}{\sqrt{2g-2+n}}.$$
\end{lemma}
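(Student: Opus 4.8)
The goal is to establish the upper bound
\[
V_{g,n}\prec\frac{(2g-3+n)!\cdot(4\pi^2)^{2g-3+n}}{\sqrt{2g-2+n}}
\]
uniformly over all pairs $(g,n)$ with $2g-2+n\geq 2$. The essential difficulty is that the asymptotic formula in Theorem \ref{t1-MZ} has an error term $O(1/g)$ whose implied constant depends on $n$; it therefore gives control only in the regime where $g\to\infty$ with $n$ fixed, and cannot by itself produce a genuinely \emph{uniform} inequality valid for all $(g,n)$ simultaneously. So the plan is to avoid relying on the precise asymptotic and instead bootstrap from the ratio estimate of Lemma \ref{l-MZ}, which \emph{is} uniform.

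The plan is to fix the value of $m\colonequals 2g-2+n$ and compare $V_{g,n}$ with $V_{g,n'}$ where $n'$ is chosen to make the second index small while keeping $m$ comparable, or more directly to telescope the ratio $V_{g,n}/V_{g,n-1}$ down to an anchored base case. Concretely, I would first rewrite Lemma \ref{l-MZ} as
\[
\frac{V_{g,n+1}}{(2g-2+n)V_{g,n}}=4\pi^2\cdot\frac{1}{\,1+\delta_{g,n}\,},\qquad |\delta_{g,n}|\leq 4\pi^2\cdot\frac{c_2 n}{2g-2+n},
\]
so that each step of incrementing the number of cusps multiplies the volume by $(2g-2+n)\cdot 4\pi^2$ up to a controlled multiplicative error. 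The quantity I actually want to bound is
\[
\frac{V_{g,n}\cdot\sqrt{2g-2+n}}{(2g-3+n)!\cdot(4\pi^2)^{2g-3+n}},
\]
and I would show this is bounded above by a universal constant. The strategy is to reduce, by telescoping the ratios, to the case where $n$ is bounded by a fixed constant (say $n\leq n_0$) relative to $g$, and handle the bounded-$n$ large-$g$ regime by Theorem \ref{t1-MZ} together with the fact that there are only finitely many residual small cases to absorb into the implied constant.

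The key steps, in order, would be: (i) record the uniform one-step ratio bound from Lemma \ref{l-MZ} in the normalized form above; (ii) split into regimes according to whether $n$ is small or large compared to $g$ — when $n\leq C_0 g$ the error terms $\delta_{g,n}$ are summable enough that telescoping from a fixed base case controls the product, while when $n$ is large compared to $g$ one telescopes in the genus-lowering direction using the monotonicity $V_{g-1,n+4}\leq V_{g,n+2}$ from Lemma \ref{l-mirz}(2) to move mass into the $g=0$ or small-$g$ sphere cases; (iii) in each regime, collect the accumulated multiplicative errors and verify they stay bounded by a universal constant, paying attention that $\sqrt{2g-2+n}$ tracks correctly against the factorial and power-of-$4\pi^2$ normalization; (iv) absorb the finitely many boundary cases into the constant.

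The main obstacle I anticipate is controlling the accumulated product of the error factors $(1+\delta_{g,n})^{-1}$ when summing over the telescoped range: since $\delta_{g,n}\approx c_2 n/(2g-2+n)$ can be of order one when $n\asymp g$, a naive product of $n$ such factors could blow up or decay uncontrollably. The telescoping must therefore be arranged so that one only ever increments the cusp count within a regime where $\sum \delta_{g,n}$ is uniformly bounded, and the transition between regimes is handled by the genus-reduction inequality rather than by the ratio estimate. Getting the bookkeeping of these two mechanisms to patch together cleanly — so that the final constant is genuinely universal and independent of both $g$ and $n$ — is the delicate part; everything else is routine manipulation of the recursions already available in Lemmas \ref{l-mirz} and \ref{l-MZ} and Theorem \ref{t1-MZ}.
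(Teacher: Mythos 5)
Your diagnosis of the uniformity problem in Theorem \ref{t1-MZ} is correct (its implied constant depends on $n$), and your intended endgame---reduce to boundedly many cusps and then invoke the fixed-$n$ asymptotic---is also the paper's. But the mechanism you propose for getting there has two genuine flaws. First, you invoke the monotonicity of Lemma \ref{l-mirz}(2) in the wrong direction: $V_{g-1,n+4}\leq V_{g,n+2}$ says that lowering the genus while adding punctures \emph{decreases} the volume, so ``telescoping in the genus-lowering direction to move into the $g=0$ or small-$g$ sphere cases'' yields a \emph{lower} bound on $V_{g,n}$, not the upper bound you need. And even if you could arrive at $g=0$ with $n$ large, Theorem \ref{t1-MZ} is an asymptotic as $g\to\infty$ with $n$ fixed and says nothing there; you would need Manin--Zograf-type fixed-genus asymptotics, which are not among your stated tools (and whose growth constant is not $4\pi^2$). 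Second, your claim that the one-step errors $\delta_{g,n}\asymp c_2 n/(2g-2+n)$ from Lemma \ref{l-MZ} are ``summable enough'' whenever $n\leq C_0 g$ is false: telescoping the cusp count from a fixed base up to $n$ accumulates a total relative error of order $\sum_{i\leq n} i/g\asymp n^2/g$, which stays bounded only when $n\prec\sqrt g$---this is exactly why Lemma \ref{l-cor-2} carries that hypothesis. So in the intermediate regime $\sqrt g\prec n\prec g$ neither of your two mechanisms applies, and your patching scheme does not close.

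The repair is much simpler than your plan: no ratio-telescoping via Lemma \ref{l-MZ} is needed at all. Read Lemma \ref{l-mirz}(2) in the useful direction, $V_{g,n}\leq V_{g+1,n-2}$, a move which preserves $2g-2+n$ exactly; iterating (for $n\geq 4$, with the finitely many boundary instances absorbed into the constant) gives $V_{g,n}\leq V_{g',n'}$ with $n'\in\{0,1,2,3\}$ and $2g'-2+n'=2g-2+n$. Since $n'$ ranges over only four values, Theorem \ref{t1-MZ} now applies with a genuinely universal implied constant, yielding $V_{g',n'}\prec (2g'-3+n')!\,(4\pi^2)^{2g'-3+n'}/\sqrt{g'}$, and the identity $2g-2+n=2g'-2+n'\prec g'$ converts $1/\sqrt{g'}$ into $1/\sqrt{2g-2+n}$, with the factorial and the power of $4\pi^2$ matching exactly because $2g'-3+n'=2g-3+n$. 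This one-mechanism argument is the paper's proof, and it covers all regimes of $(g,n)$ at once.
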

\begin{proof}
From Part $(2)$ of Lemma \ref{l-mirz} and the assumption that $2g-2+n\geq 2$, there exist $g^\prime>0$ and $0\leq n^\prime\leq 3$ such that
$$2g-2+n=2g^\prime-2+n^\prime\text{ and }V_{g,n}\leq V_{g^\prime,n^\prime}.$$
Apply Theorem \ref{t1-MZ} to $V_{g^\prime,n^\prime}$, we have
\begin{align*}
V_{g,n}&\leq V_{g^\prime,n^\prime}
\prec \frac{(2g^\prime-3+n^\prime)!\cdot (4\pi^2)^{2g^\prime-3+n^\prime}}{\sqrt {g^\prime}}\\
&\prec\frac{(2g-3+n)!\cdot (4\pi^2)^{2g-3+n}}{\sqrt{2g-2+n}}
\end{align*}
where the last inequality holds since $2g-2+n\prec g^\prime$. The proof is complete.
\end{proof}

The following lemma is implicitly contained in the proof of \cite[Theorem 1.8]{MZ15}. We prove it here for completeness.
\begin{lemma}\label{l-cor-2}
Assume $n(g)\prec\sqrt{g}$, then we have
$$V_{g,n(g)}\asymp \frac{(2g-3+n(g))! \cdot(4\pi^2)^{2g-3+n(g)}}{\sqrt g}.$$
\end{lemma}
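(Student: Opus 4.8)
The plan is to establish the two one-sided bounds packaged in the symbol $\asymp$. The upper bound is essentially immediate from Lemma \ref{l-cor-1}: under the hypothesis $n(g)\prec\sqrt{g}$ we have $2g-2+n(g)\asymp g$, hence $\sqrt{2g-2+n(g)}\asymp\sqrt{g}$, and Lemma \ref{l-cor-1} already yields
$$V_{g,n(g)}\prec\frac{(2g-3+n(g))!\cdot(4\pi^2)^{2g-3+n(g)}}{\sqrt{g}}.$$
The content of the lemma is therefore the matching lower bound.

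For the lower bound I would telescope from $V_{g,0}$ up to $V_{g,n(g)}$ using the recursive estimate in Lemma \ref{l-MZ}. Set $r_k\overset{\text{def}}{=}\frac{(2g-2+k)V_{g,k}}{V_{g,k+1}}$, so that Lemma \ref{l-MZ} gives $r_k=\frac{1}{4\pi^2}+\delta_k$ with $|\delta_k|\leq\frac{c_2 k}{2g-2+k}$. Telescoping the identity $\frac{V_{g,k+1}}{V_{g,k}}=\frac{2g-2+k}{r_k}$ for $0\leq k\leq n-1$ and using $\prod_{k=0}^{n-1}(2g-2+k)=\frac{(2g-3+n)!}{(2g-3)!}$ gives
$$V_{g,n}=V_{g,0}\cdot\frac{(2g-3+n)!}{(2g-3)!}\cdot\prod_{k=0}^{n-1}\frac{1}{r_k}.$$
Applying Theorem \ref{t1-MZ} with the fixed value $n=0$, namely $V_{g,0}=\frac{C}{\sqrt{g}}(2g-3)!(4\pi^2)^{2g-3}\left(1+O\left(\frac1g\right)\right)$, reduces everything to showing that the normalized product $P\overset{\text{def}}{=}(4\pi^2)^n\prod_{k=0}^{n-1}r_k=\prod_{k=0}^{n-1}(1+4\pi^2\delta_k)$ stays between two positive constants; then
$$V_{g,n}=\frac{C}{\sqrt{g}}\left(1+O\left(\tfrac1g\right)\right)(2g-3+n)!(4\pi^2)^{2g-3+n}\cdot\frac{1}{P}.$$

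The main obstacle, and the precise point where the hypothesis $n(g)\prec\sqrt{g}$ is used, is the control of $P$. Writing $\epsilon_k=4\pi^2\delta_k$, the bound $|\epsilon_k|\leq\frac{4\pi^2 c_2 k}{2g-2+k}\leq\frac{4\pi^2 c_2 k}{2g-2}$ gives
$$\sum_{k=0}^{n-1}|\epsilon_k|\leq\frac{4\pi^2 c_2}{2g-2}\cdot\frac{n(n-1)}{2}\asymp\frac{n^2}{g},$$
which is bounded exactly because $n\prec\sqrt{g}$; moreover each $|\epsilon_k|\prec\frac{n}{g}\prec\frac{1}{\sqrt{g}}\to0$, so for $g$ large every factor satisfies $|\epsilon_k|<\frac12$ and hence $|\log(1+\epsilon_k)|\leq 2|\epsilon_k|$. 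Therefore $\big|\log P\big|\leq 2\sum_k|\epsilon_k|\prec\frac{n^2}{g}=O(1)$, giving $P\asymp1$. Substituting this and $\left(1+O\left(\frac1g\right)\right)\asymp1$ into the displayed expression for $V_{g,n}$ yields $V_{g,n(g)}\asymp\frac{(2g-3+n(g))!(4\pi^2)^{2g-3+n(g)}}{\sqrt{g}}$, as claimed. I note that this telescoping argument in fact reproves the upper bound simultaneously, so the appeal to Lemma \ref{l-cor-1} is a convenience rather than a necessity.
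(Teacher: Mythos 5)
Your proof is correct and takes essentially the same route as the paper: both telescope $V_{g,n(g)}$ down to $V_{g,0}$ using the ratio estimate of Lemma \ref{l-MZ}, control the resulting product of $1+O\left(\frac{k}{g}\right)$ correction factors by logarithm estimates that rest precisely on $\frac{n(g)^2}{g}=O(1)$, and conclude by applying Theorem \ref{t1-MZ} to $V_{g,0}$. Your preliminary appeal to Lemma \ref{l-cor-1} for the upper bound is, as you yourself note, redundant, since the telescoping yields the two-sided bound exactly as in the paper's proof.
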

\begin{proof}
It suffices to consider the case when $g$ is large. From Lemma \ref{l-MZ}, there exists a universal constant $c_3>0$ such that for any $0\leq n\leq n(g)$ and large enough $g$,
\begin{align}\label{lc-e-0}
\frac{1}{4\pi^2}\left(1-\frac{c_3n}{g}\right)\leq\frac{(2g-2+n)V_{g,n}}{V_{g,n+1}}\leq \frac{1}{4\pi^2}\left(1+\frac{c_3n}{g}\right).
\end{align}
Denote by
$$\mu_1=\prod\limits_{i=1}^{n(g)}\left(1-\frac{c_3i}{g}\right)\text{ and }\mu_2=\prod\limits_{i=1}^{n(g)}\left(1+\frac{c_3i}{g}\right).$$
It is clear that for $x>0$ small enough, $\log(1+x)\leq x$ and $\log(1-x)\geq -2x$. Since $n(g)\prec\sqrt g$, it follows that for $g$ large enough,
$$\log \mu_1=\sum\limits_{i=1}^{n(g)}\log\left(1-\frac{c_3i}{g}\right)\geq \sum\limits_{i=1}^{n(g)}-\frac{2c_3 i}{g}\asymp -\frac{n(g)^2}{g}\succ -1$$
and
\begin{align*}
\log \mu_2=\sum\limits_{i=1}^{n(g)}\log\left(1+\frac{c_3i}{g}\right)\leq \sum\limits_{i=1}^{n(g)}\frac{c_3i}{g}\asymp \frac{n(g)^2}{g}\prec 1.
\end{align*}
Hence one may assume that there exists a universal constant $c_4>1$ such that
\begin{align}\label{lc-e-1}
\frac{1}{c_4}<\mu_1<\mu_2<c_4.
\end{align}
For any $g,n\geq 0$ with $2g-2+n\geq 2$, denote by
$$C_{g,n}=\frac{(2g-3+n)!\cdot(4\pi^2)^{2g-3+n}}{\sqrt g}.$$
Then we have
$$\frac{V_{g,n(g)}}{C_{g,n(g)}}=\prod\limits_{i=0}^{n(g)-1}\frac{ V_{g,n(g)-i}}{4\pi^2(2g-3+n(g)-i)V_{g,n(g)-i-1}}
\cdot\frac{V_{g,0}}{C_{g,0}}.$$
Together with \eqref{lc-e-0} and \eqref{lc-e-1}, it follows that
$$\frac{1}{c_4}\cdot\frac{V_{g,0}}{C_{g,0}}\leq\frac{1}{\mu_2}\cdot\frac{V_{g,0}}{C_{g,0}}\leq\frac{V_{g,n(g)}}{C_{g,n(g)}}\leq \frac{1}{\mu_1}\cdot \frac{V_{g,0}}{C_{g,0}}\leq c_4\cdot\frac{V_{g,0}}{C_{g,0}}.$$
Then the proof is complete by applying Theorem \ref{t1-MZ}.
\end{proof}

\subsection{A useful bound for WP volume}\label{sub-I}
For $m\in\mathbb{N}$ and non-negative integers $g,n(g)$ with \be\chi\overset{\text{def}}{=}2g-2+n(g)\geq 1,\ene we denote by $\mathcal{I}_m$ the set consisting of all 4-tuples of non-negative integers $(g_1,n_1,g_2,n_2)$ satisfying the following conditions:
\begin{enumerate}[(a)]
\item $2g_1-2+n_1+2g_2-2+n_2=\chi$.
\item $m=2g_1-2+n_1\leq 2g_2-2+n_2$.
\item $n(g)-n_1\leq n_2\leq n(g)+n_1$, $n_1,n_2\geq 1$.
\end{enumerate}
From condition $(b)$, there are at most $m+2$ different choices for $(g_1,n_1)$. Since $n_1\leq m+2$, together with condition $(c)$, there are at most $2m+5$ different choices for $n_2$. Recall that from condition $(b)$, $g_1$ is uniquely determined by $n_1$ and $m$. Thus it follows that
 \begin{align}\label{pv2-e00}
 \left|\mathcal{I}_m\right|\leq 2(m+3)^2.
 \end{align}

\begin{lemma}\label{l-ratio}
Assume $n(g)\prec\sqrt{g}$. Then for any pair $(g_1,n_1,g_2,n_2)\in\mathcal{I}_m$,
\[\frac{V_{g_1,n_1}V_{g_2,n_2}}{V_{g,n(g)}}\prec \frac{m^{m}(\chi-m)^{\chi-m}}{\chi^\chi}.\]
\end{lemma}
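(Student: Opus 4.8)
The plan is to reduce the ratio $\frac{V_{g_1,n_1}V_{g_2,n_2}}{V_{g,n(g)}}$ to a purely combinatorial expression in factorials by invoking the precise asymptotic from Lemma \ref{l-cor-2}. The hypothesis $n(g)\prec\sqrt g$ is exactly what is needed to apply Lemma \ref{l-cor-2} to the denominator $V_{g,n(g)}$, giving
\[
V_{g,n(g)}\asymp \frac{(2g-3+n(g))!\,(4\pi^2)^{2g-3+n(g)}}{\sqrt g}=\frac{(\chi-1)!\,(4\pi^2)^{\chi-1}}{\sqrt g}.
\]
For the numerator, the factor of $(4\pi^2)$ will work out cleanly: by condition $(a)$ the total ``complexity'' splits as $(2g_1-2+n_1)+(2g_2-2+n_2)=\chi$, so writing $m=2g_1-2+n_1$ and $\chi-m=2g_2-2+n_2$, the powers of $4\pi^2$ in $V_{g_1,n_1}V_{g_2,n_2}$ combine to $(4\pi^2)^{(m-1)+(\chi-m-1)}=(4\pi^2)^{\chi-2}$, which matches the denominator up to the bounded factor $4\pi^2$. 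Thus the powers of $4\pi^2$ contribute only a uniform constant and can be absorbed into $\prec$.

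First I would bound the two volumes in the numerator. Here one must be careful, because Lemma \ref{l-cor-2} requires $n\prec\sqrt{(\text{genus})}$, which may fail for the small piece $(g_1,n_1)$; instead I would apply the cruder upper bound of Lemma \ref{l-cor-1}, valid whenever $2g-2+n\geq 2$, to obtain
\[
V_{g_1,n_1}\prec\frac{(m-1)!\,(4\pi^2)^{m-1}}{\sqrt m}\quad\text{and}\quad V_{g_2,n_2}\prec\frac{(\chi-m-1)!\,(4\pi^2)^{\chi-m-1}}{\sqrt{\chi-m}},
\]
handling the finitely many degenerate small cases ($m=1$, or $2g_i-2+n_i<2$) separately by absorbing them into the implied constant. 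Combining these with the lower bound on $V_{g,n(g)}$ and cancelling the matched powers of $4\pi^2$, the ratio is controlled by
\[
\frac{V_{g_1,n_1}V_{g_2,n_2}}{V_{g,n(g)}}\prec \frac{(m-1)!\,(\chi-m-1)!}{(\chi-1)!}\cdot\frac{\sqrt g}{\sqrt{m}\,\sqrt{\chi-m}}.
\]

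The remaining step is to convert the factorial ratio into the claimed $\frac{m^m(\chi-m)^{\chi-m}}{\chi^\chi}$ via Stirling's formula. Writing $k!\sim\sqrt{2\pi k}\,k^k e^{-k}$, one checks that the exponentials $e^{-m}e^{-(\chi-m)}e^{\chi}$ cancel, the leading powers produce exactly $\frac{m^m(\chi-m)^{\chi-m}}{\chi^\chi}$, and the polynomial Stirling prefactors together with the surviving $\sqrt g/(\sqrt m\sqrt{\chi-m})$ and the factor-shift from $(k-1)!$ versus $k!$ contribute only powers of $m$, $\chi-m$, $\chi$, and $g$ that are bounded by a uniform constant (noting $g\asymp\chi$ since $n(g)\prec\sqrt g\prec\chi$). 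I expect the only real subtlety to be bookkeeping: making sure the genuinely degenerate boundary cases of $\mathcal{I}_m$ (where one piece has $2g_i-2+n_i\in\{0,1\}$, so $V_{0,3}=1$ or a pair-of-pants-type factor appears) do not break the Stirling estimate, but since there are only finitely many such configurations for each $m$ and $|\mathcal{I}_m|\leq 2(m+3)^2$ is polynomially bounded, they are harmless and absorbed into $\prec$.
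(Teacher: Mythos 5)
Your proposal is correct and follows essentially the same route as the paper's proof: Lemma \ref{l-cor-2} for the denominator $V_{g,n(g)}$, Lemma \ref{l-cor-1} for the two numerator factors, cancellation of the $4\pi^2$ powers and the observation $\chi-m\asymp\chi\asymp g$, then Stirling's formula to produce $\frac{m^m(\chi-m)^{\chi-m}}{\chi^\chi}$. Your extra bookkeeping for the degenerate small pieces (absorbed into the implied constant) is a minor refinement the paper leaves implicit, not a different argument.
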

\bp
Since $2m\leq \chi$ and $n(g)\prec\sqrt{g}$,
\be\label{i-r-1}
(\chi-m)\asymp \chi \asymp g.
\ene
Then by Lemma \ref{l-cor-2} we have
\be\label{i-r-2}
V_{g,n(g)}\asymp \frac{(\chi-1)!\cdot (4\pi^2)^{\chi}}{\sqrt{\chi}}.
\ene
By Lemma \ref{l-cor-1} we also have
\be\label{i-r-3}
V_{g_1,n_1}\prec \frac{(m-1)!\cdot (4\pi^2)^{m}}{\sqrt{m}} \textit{\ and \ } V_{g_2,n_2}\prec \frac{(\chi-m-1)!\cdot (4\pi^2)^{\chi-m}}{\sqrt{\chi-m}}.
\ene
Combining \eqref{i-r-1}, \eqref{i-r-2} and \eqref{i-r-3}, we have
\be\label{i-r-4}
\frac{V_{g_1,n_1}V_{g_2,n_2}}{V_{g,n(g)}}\prec\frac{(m-1)!(\chi-m-1)!}{(\chi-1)!}\cdot\frac{\sqrt{g}}{\sqrt{m}\sqrt{\chi-m}}\prec\frac{m!(\chi-m)!}{\chi!}\cdot\frac{1}{m}.
\ene
Recall that the Stirling's formula says that
$$k!\sim\sqrt{2\pi k}\left(\frac{k}{e}\right)^k.$$
Plug it into \eqref{i-r-4}, we get
\[\frac{V_{g_1,n_1}V_{g_2,n_2}}{V_{g,n(g)}}\prec \frac{m^{m}(\chi-m)^{\chi-m}}{\chi^\chi}\cdot\frac{\sqrt{\chi-m}}{\sqrt m\sqrt\chi}\prec \frac{m^{m}(\chi-m)^{\chi-m}}{\chi^\chi}\]
as desired.
\ep

Now we prove the following result which is useful in the proof of Theorem \ref{mt-4}.
\begin{proposition}\label{p-vol-2}
Assume $n(g)\prec \sqrt g$. For $0<u<\frac{\log 2}{2\pi}$ and $m\geq 2\in \mathbb{N}$, set $$L_m=2\pi mu+3\cdot (2\pi mu)^{\frac{2}{3}}.$$ Then we have
$$\frac{1}{V_{g,n(g)}}\sum\limits_{m=2}^{\lfloor \frac{\chi}{2}\rfloor}\sum\limits_{(g_1,n_1,g_2,n_2)\in\mathcal{I}_m}C(m,n(g))V_{g_1,n_1}V_{g_2,n_2}e^{L_m} \prec \frac{1}{\sqrt g},$$
where $C(m,n(g))=\max\limits_{0\leq i\leq m+1} \binom{n(g)}{i}.$
 \end{proposition}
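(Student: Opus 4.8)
The plan is to collapse the double sum into a one–variable sum in $m$ by means of Lemma~\ref{l-ratio} and the counting bound \eqref{pv2-e00}, and then to show that this sum is controlled by its initial terms. Write $\chi=2g-2+n(g)$ and abbreviate $n=n(g)$. Since $C(m,n)$ and $e^{L_m}$ depend only on $m$, while every tuple in $\mathcal I_m$ obeys $\frac{V_{g_1,n_1}V_{g_2,n_2}}{V_{g,n}}\prec\frac{m^m(\chi-m)^{\chi-m}}{\chi^\chi}$ and $|\mathcal I_m|\le 2(m+3)^2$, the left–hand side is
\[
\prec\sum_{m=2}^{\lfloor\chi/2\rfloor}(m+3)^2\,C(m,n)\,\frac{m^m(\chi-m)^{\chi-m}}{\chi^\chi}\,e^{L_m}.
\]
I would then use two elementary reductions valid on $2\le m\le\chi/2$: first $\frac{m^m(\chi-m)^{\chi-m}}{\chi^\chi}=\left(\tfrac m\chi\right)^m\left(1-\tfrac m\chi\right)^{\chi-m}\le\left(\tfrac m\chi\right)^m$, and second $e^{L_m}=\rho^{m}e^{3(2\pi mu)^{2/3}}$ with $\rho:=e^{2\pi u}$. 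The hypothesis $u<\frac{\log2}{2\pi}$ enters here decisively, since it is exactly the statement $\rho<2$, hence $\tfrac{\rho m}{\chi}\le\tfrac\rho2<1$ throughout the range.

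Next I would split the sum according to the shape of $C(m,n)=\max_{0\le i\le m+1}\binom ni$. On the head range $2\le m\le\tfrac n2-1$ the maximiser is $i=m+1$, so $C(m,n)=\binom n{m+1}\le\frac{n^{m+1}}{(m+1)!}$; combining this with $(\rho m/\chi)^m$ and the elementary inequality $\frac{m^m}{(m+1)!}\le e^m$ gives $C(m,n)(\rho m/\chi)^m\prec n\left(\frac{e\rho n}{\chi}\right)^m$. Because $n\prec\sqrt\chi$ we have $\frac{e\rho n}{\chi}\prec\chi^{-1/2}$, so this part of the sum is a geometric–type series whose ratio is $O(\chi^{-1/2})$ — the polynomial weight $(m+3)^2$ and the subexponential factor $e^{3(2\pi mu)^{2/3}}$ being harmless — and is therefore dominated by its $m=2$ term, of size $\prec n^3/\chi^2\prec\chi^{-1/2}\asymp g^{-1/2}$.

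On the tail range $m\ge\tfrac n2$ one has $C(m,n)=\binom n{\lfloor n/2\rfloor}\le 2^{n}$, and after factoring $2^n$ out it remains to bound $2^n\sum_m(m+3)^2(\rho m/\chi)^m e^{3(2\pi mu)^{2/3}}$. I would split once more at $m=\chi^{3/4}$. For $\tfrac n2\le m\le\chi^{3/4}$ the ratio of consecutive summands is $\asymp\frac{e\rho m}{\chi}=O(\chi^{-1/4})$, so the series is again geometric and dominated by its left endpoint $m\approx n/2$; evaluating there and using $n\le c\sqrt\chi$ produces a bound of the shape $2^n\left(\frac{\rho n}{2\chi}\right)^{n/2}\prec(2\rho c)^{n/2}\chi^{-n/4}$, which is $\prec\chi^{-1/2}$ because $\tfrac14\log\chi\to\infty$ eventually swamps the constant $\tfrac12\log(2\rho c)$, uniformly in how $n\le c\sqrt\chi$ grows. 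For $\chi^{3/4}\le m\le\chi/2$ I would use $(\rho m/\chi)^m\le(\rho/2)^m$ with $\rho/2<1$: the linear decay $m\log(\rho/2)$ overcomes the subexponential $3(2\pi u)^{2/3}m^{2/3}$, giving summands $\prec e^{-c'm}$ and a tail $\prec\chi^{3/2}e^{-c'\chi^{3/4}}$, so that even after multiplication by $2^n\le e^{O(\sqrt\chi)}$ the contribution is $\prec\chi^{-1/2}$ since $\chi^{3/4}$ dominates $\sqrt\chi$. Summing the three estimates yields the claimed $\prec 1/\sqrt g$.

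The main obstacle is precisely this tail range, where the factor $2^n=e^{\Theta(\sqrt g)}$ is genuinely large and must be beaten by the super-geometric smallness of $(m/\chi)^m$; this succeeds only through the interaction of $n\prec\sqrt g$ with the unbounded growth of $\log\chi$. The strict inequality $u<\frac{\log2}{2\pi}$ is indispensable exactly so that $\rho/2<1$, which is what lets the genuine geometric decay in $m$ absorb the subexponential factor $e^{3(2\pi mu)^{2/3}}$; at the endpoint $u=\frac{\log2}{2\pi}$ this mechanism breaks down.
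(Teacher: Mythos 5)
Your proposal is correct and follows essentially the same route as the paper's proof: both collapse the double sum via Lemma \ref{l-ratio} and the bound \eqref{pv2-e00}, bound $C(m,n(g))$ by $\min\bigl\{n(g)^{m+1},\,2^{n(g)}\bigr\}$, and split the range of $m$ (the paper at $4\log g$, $\frac{\sqrt g}{4ea}$, $g^{\frac{3}{4}}$; you at $\frac{n}{2}$ and $\chi^{\frac{3}{4}}$) so that the strict inequality $u<\frac{\log 2}{2\pi}$ produces geometric decay absorbing both $e^{L_m}$ and the $2^{n(g)}$ factor, with the dominant contribution $\prec n(g)^3/\chi^2\prec 1/\sqrt g$ coming from small $m$ exactly as in the paper's Case IV. The only cosmetic caveat is the regime $n(g)\leq 3$, where your head range is empty and the tail's left endpoint is $m=2$ (giving $\prec\chi^{-2}$), so your formula $(2\rho c)^{n/2}\chi^{-n/4}$ does not literally apply there but the conclusion holds trivially.
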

\begin{proof}
Since $n(g)\prec\sqrt g$, one may assume $n(g)\leq a\sqrt g$ for some $a>0$. It suffices to consider for large enough $g$.
 For all $2\leq m\leq\lfloor\frac{\chi}{2}\rfloor$, set
$$A_m=\frac{1}{V_{g,n(g)}}\sum\limits_{(g_1,n_1,g_2,n_2)\in\mathcal{I}_m}C(m,n(g))V_{g_1,n_1}V_{g_2,n_2}e^{L_m}.$$
Since $u<\frac{\log 2}{2\pi}$, it follows that for $m$ large enough
\begin{align*}
e^{\frac{L_m}{m}}=e^{2\pi u+3(2\pi u)^{\frac{2}{3}}m^{-\frac{1}{3}}}<2-\delta
\end{align*}
for some constant $\delta>0$ only depending on $u$. So we have for all $m\geq 2$,
\begin{align*}
e^{L_m}\prec (2-\delta)^m.
\end{align*}
Then from \eqref{pv2-e00} and Lemma \ref{l-ratio} we have
\begin{align}\label{pv2-e1}
A_m&\prec C(m,n(g))\frac{m^m(\chi-m)^{\chi-m}}{\chi^\chi}e^{L_m}m^2\\
&\prec C(m,n(g))\left(\frac{m(2-\delta)}{\chi}\right)^m m^2.\nonumber
\end{align}
It is clear that for any $m\geq 2$, we also have
\begin{align}\label{pv2-e2}
\max\limits_{0\leq i\leq m+1}\binom{n(g)}{i}=\max\limits_{0\leq i\leq m+1}\frac{n(g)!}{i!\cdot (n(g)-i)!}\leq\min\left\{n(g)^{m+1},2^{n(g)}\right\}.
\end{align}
\

\noindent Next we split the argument into four different cases.
\

\noindent \underline{Case I: $g^{\frac{3}{4}}\leq m\leq\lfloor\frac{\chi}{2}\rfloor$.} From \eqref{pv2-e1} and \eqref{pv2-e2} we have
\begin{align*}
A_m&\prec C(m,n(g))\left(\frac{m(2-\delta)}{\chi}\right)^m m^2\\
&\prec 2^{\left\lfloor a\sqrt g\right\rfloor}\left(\frac{2-\delta}{2}\right)^{\left\lfloor g^{\frac{3}{4}}\right\rfloor}g^2\prec\frac{1}{g^2}.\nonumber
\end{align*}
Then it follows that
\begin{align}\label{pv2-e3}
\sum\limits_{m=\left\lfloor g^{\frac{3}{4}}\right\rfloor+1}^{\lfloor\frac{\chi}{2}\rfloor}A_m \prec \frac{1}{g}.
\end{align}
\underline{Case II: $\frac{\sqrt g}{4ea}\leq m\leq g^{\frac{3}{4}}$.} From \eqref{pv2-e1} and \eqref{pv2-e2} we have
\begin{align*}
A_m&\prec C(m,n(g))\left(\frac{m(2-\delta)}{\chi}\right)^m m^2\\
&\prec 2^{\left\lfloor a\sqrt g\right\rfloor}\left(\frac{2}{g^{\frac{1}{4}}}\right)^{\left\lfloor \frac{\sqrt g}{4ea}\right\rfloor}m^2\nonumber\\
&\prec\left(\frac{2^{4ea^2+1}}{g^{\frac{1}{4}}}\right)^{\left\lfloor \frac{\sqrt g}{4ea}\right\rfloor}\cdot g^3
\prec\frac{1}{g^2}.\nonumber
\end{align*}
Then it follows that
\begin{align}\label{pv2-e4}
\sum\limits_{m=\lfloor \frac{\sqrt g}{4ea}\rfloor}^{\lfloor g^{\frac{3}{4}}\rfloor}A_m \prec\frac{1}{g}.
\end{align}
\underline{Case III: $4\log g\leq m\leq \frac{\sqrt g}{4ea}$.} For this case we have $n(g)\cdot m \leq a \sqrt{g} \cdot \frac{\sqrt g}{4ea}=\frac{g}{4e}$. Then from \eqref{pv2-e1} and \eqref{pv2-e2} we have
\begin{align*}
A_m&\prec n(g)^{m+1}\left(\frac{m(2-\delta)}{\chi}\right)^m m^2\\
   &\prec \left(\frac{2n(g)m}{\chi}\right)^m n(g)m^2\\
   &\prec \left(\frac{1}{e}\right)^m n(g)\cdot g\prec \left(\frac{1}{e}\right)^m  g^{\frac{3}{2}}.
\end{align*}
Then it follows that
\begin{align}\label{pv2-e5}
\sum\limits_{m=\left\lfloor 4\log g\right\rfloor}^{\left\lfloor \frac{\sqrt g}{4ea}\right\rfloor}A_m\prec\sum\limits_{m=\left\lfloor 4\log g\right\rfloor}^{\left\lfloor\frac{\sqrt g}{4ea}\right\rfloor}\left(\frac{1}{e}\right)^m g^{\frac{3}{2}} \prec e^{-4\log g}\cdot \sqrt{g}\cdot g^{\frac{3}{2}} \prec \frac{1}{g}.
\end{align}
\underline{Case IV: $2\leq m\leq 4\log g$.} From \eqref{pv2-e1} and \eqref{pv2-e2} we have
\begin{align}\label{pv2-e6}
\sum\limits_{m=2}^{\left\lfloor 4\log g\right\rfloor}A_m&\prec\sum\limits_{m=2}^{\left\lfloor 4\log g\right\rfloor} n(g)^{m+1}\left(\frac{m(2-\delta)}{\chi}\right)^m m^2\\
&\prec\sum\limits_{m=2}^{\left\lfloor 4\log g\right\rfloor}\left(\frac{2am}{\sqrt g}\right)^m n(g)m^2 \nonumber \\
&\leq \frac{64a^2n(g)}{g}+\sum\limits_{m=3}^{\left\lfloor 4\log g\right\rfloor}\left(\frac{8a\log g}{\sqrt g}\right)^3n(g)(4\log g)^2\prec \frac{1}{\sqrt g}. \nonumber
\end{align}

\noindent Then the conclusion follows from \eqref{pv2-e3}, \eqref{pv2-e4}, \eqref{pv2-e5} and \eqref{pv2-e6}.
\end{proof}

\subsection{Intersection number and WP volume}\label{sub-e} Recall that on $\overline{\M}_{g,n}$ the compactified moduli space of surfaces of genus $g$ with $n$ punctures, there are $n$ tautological line bundles $\{\mathcal{L}_i\}_{i=1}^n$. Let $\psi_i =c_1(\mathcal{L}_i) \in H_2(\overline{\M}_{g,n},\mathbb{Q})$ be the first Chern class of $\mathcal{L}_i$ and $\omega$ be the Weil-Petersson symplectic form on $\overline{\M}_{g,n}$. For any $\textbf{d}=(d_1,\cdots,d_n)$ with each $d_i\in\Z_{\geq 0}$ and $|\textbf{d}|=d_1+\cdots+d_n\leq 3g-3+n$, the \emph{intersection number} $\left[\tau_{d_1}\cdots\tau_{d_n}\right]_{g,n}$ is defined as
\begin{equation*}
\left[\tau_{d_1}\cdots\tau_{d_n}\right]_{g,n} =\left[\prod\limits_{i=1}^n\tau_{d_i}\right]_{g,n}\overset{\text{def}}{=} \frac{\prod_{i=1}^n 2^{2d_i}(2d_i+1)!!}{(3g-3+n-|d|)!} \int_{\overline{\M}_{g,n}} \psi_1^{d_1}\cdots \psi_n^{d_n} \omega^{3g-3+n-|d|}.
\end{equation*}
In particular, $\left[\tau_{0}\cdots\tau_{0}\right]_{g,n}=\Vol(\sM_{g,n})$, which is also denoted by $V_{g,n}$.

In this subsection we use the same notations as in \cite{Mirz13, MZ15}.
Recall \cite[(3.1)]{Mirz13} tells that for $x_i\geq 0 \ (1\leq i\leq n)$
\begin{align}\label{e-vol}
V_{g,n}(2x_1,...,2x_n)=\sum\limits_{|\textbf{d}|\leq 3g-3+n}\left[\prod\limits_{i=1}^n\tau_{d_i}\right]_{g,n}\cdot\prod\limits_{i=1}^n
\frac{x_i^{2d_i}}{(2d_i+1)!}.
\end{align}
 To bound $V_{g,n}(2x_1,...,2x_n)$, it suffices to bound the intersection numbers $\left[\prod\limits_{i=1}^n\tau_{d_i}\right]_{g,n}$.

As in \cite{Mirz13} we let $a_0=\frac{1}{2}$ and for $i\geq 1$,
$$a_i=\zeta(2i)(1-2^{1-2i}),$$
where $\zeta(\cdot)$ is the Riemann zeta function.
\begin{lemma}[\textit{\cite[Lemma 2.2]{MZ15}}]\label{l-ai}
The sequence $\{a_i\}_{i=0}^\infty$ is increasing. Moreover,
\begin{enumerate}
\item $$\lim\limits_{i\to\infty}a_i=1.$$
\item There exists $C>1$ such that
$$\frac{1}{C\cdot 2^{2i}}<a_{i+1}-a_i<\frac{C}{2^{2i}}.$$
\end{enumerate}
\end{lemma}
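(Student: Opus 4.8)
The plan is to recognize $a_i$ as a value of the Dirichlet eta function and then reduce everything to elementary estimates on a single alternating series. Recall the identity
\[
(1-2^{1-s})\zeta(s)=\sum_{n=1}^\infty \frac{(-1)^{n-1}}{n^{s}},
\]
valid for $s>0$ by analytic continuation and given by a convergent series when $s>1$. Applying this with $s=2i$ for $i\ge 1$ yields
\[
a_i=\sum_{n=1}^\infty \frac{(-1)^{n-1}}{n^{2i}}=1-\frac{1}{2^{2i}}+\frac{1}{3^{2i}}-\cdots,
\]
whereas $a_0=\tfrac12=(1-2)\zeta(0)$ must be treated separately, since the series diverges at $s=0$. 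First I would record the limit: for $i\ge1$ the alternating-series bound gives $\big|a_i-1\big|=\big|\sum_{n\ge 2}(-1)^{n-1}n^{-2i}\big|\le 2^{-2i}$, so $a_i\to 1$, proving part (1).

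For the monotonicity and the two-sided estimate in part (2), I would compute, for $i\ge1$,
\[
a_{i+1}-a_i=\sum_{n=2}^\infty(-1)^n\Big(1-\frac{1}{n^2}\Big)\frac{1}{n^{2i}}=:\sum_{n=2}^\infty(-1)^n b_n,
\]
where the $n=1$ term cancels and each $b_n=(1-n^{-2})n^{-2i}>0$. The key point is that $\{b_n\}_{n\ge 2}$ is strictly decreasing to $0$: writing $b_n=n^{-2i}-n^{-2i-2}$ and differentiating $f(x)=x^{-2i}-x^{-2i-2}$ gives $f'(x)=x^{-2i-3}\big(2i(1-x^2)+2\big)<0$ for $x\ge 2$ and $i\ge1$. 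Hence the sum is a genuine alternating series with positive leading sign, so it is positive, which gives $a_{i+1}>a_i$, and it is squeezed between the first partial sum $b_2$ and the second partial sum $b_2-b_3$.

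The bounds then follow by substituting $b_2=\tfrac34\,2^{-2i}$ and $b_3=\tfrac89\,3^{-2i}$. The upper estimate is $a_{i+1}-a_i\le b_2=\tfrac{3}{4\cdot 2^{2i}}$, while the lower estimate $a_{i+1}-a_i\ge b_2-b_3$, multiplied by $2^{2i}$, reads $2^{2i}(a_{i+1}-a_i)\ge \tfrac34-\tfrac89(2/3)^{2i}\ge \tfrac34-\tfrac{32}{81}>\tfrac13$ for all $i\ge1$. Thus $\tfrac13<2^{2i}(a_{i+1}-a_i)<\tfrac34$ for $i\ge1$, while the direct computation $a_1-a_0=\tfrac{\pi^2}{12}-\tfrac12\approx 0.32$ handles $i=0$; taking $C=4$ makes both inequalities in (2) hold for every $i\ge 0$. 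Finally, combining $a_0<a_1$ with $a_i<a_{i+1}$ for $i\ge 1$ shows the whole sequence is increasing.

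The routine-but-essential technical steps are the monotonicity of $b_n$ in $n$ (needed to license the alternating-series estimate) and the isolated treatment of $i=0$; the only genuinely conceptual move is the passage to the eta-function series, after which the alternating structure makes the positivity, the sharp scale $2^{-2i}$, and the explicit constant $C$ all transparent. The main point requiring care is that the series identity is \emph{invalid} at $s=0$, so the value $a_0$ and the step $a_0<a_1$ cannot be read off the alternating-series formula and must be verified by hand.
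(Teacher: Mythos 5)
Your proof is correct: the paper itself imports this lemma from \cite{MZ15} without reproducing a proof, and your route via the Dirichlet eta expansion $a_i=\sum_{n\geq 1}(-1)^{n-1}n^{-2i}$ for $i\geq 1$, together with the alternating-series squeeze $b_2-b_3\leq a_{i+1}-a_i\leq b_2$ where $b_n=(1-n^{-2})n^{-2i}$, is essentially the standard argument behind the original lemma of Mirzakhani--Zograf. You also handle correctly the two points where the argument could silently fail — verifying that $b_n$ is decreasing in $n$ (so the alternating-series estimate is actually licensed) and treating $i=0$ by the direct computation $a_1-a_0=\pi^2/12-1/2$, since the series identity is invalid at $s=0$ — and your explicit constant $C=4$ does satisfy both inequalities for all $i\geq 0$.
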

\noindent As a direct consequence, for any $k \in\mathbb{N}$,
\begin{align}\label{e-ap-1}
\sum\limits_{L=1}^\infty (L+k)(a_{L+k}-a_L)\prec k.
\end{align}
For any $\textbf{d}=(d_1,\cdots,d_n)$, let $d_0=3g-3+n-|\textbf{d}|$, the Recursive Formula (see \cite[Section 3.1]{Mirz13}) says that
$$\left[\prod\limits_{i=1}^n\tau_{d_i}\right]_{g,n}=\left(\sum\limits_{j=2}^n\mathcal{A}_{\textbf{d}}^j\right)
+\mathcal{B}_{\textbf{d}}+\mathcal{C}_{\textbf{d}},$$
where
$$\mathcal{A}_{\textbf{d}}^j=8\sum\limits_{L=0}^{d_0}(2d_j+1)a_L\left[\tau_{d_1+d_j+L-1}\cdot\prod\limits_{i\neq1,j}\tau_{d_i}\right]_{g,n-1},$$
$$\mathcal{B}_{\textbf{d}}=16\sum\limits_{L=0}^{d_0}\sum\limits_{k_1+k_2=L+d_1-2}a_L\left[\tau_{k_1}\tau_{k_2}\prod\limits_{i\neq 1}\tau_{d_i}\right]_{g,n+1},$$
and
\begin{align*}
\mathcal{C}_{\textbf{d}}&=16\sum\limits_{\substack{I\sqcup J=\{2,...,n\}\\ 0\leq g^\prime\leq g}}\sum\limits_{L=0}^{d_0}\sum\limits_{k_1+k_2=L+d_1-2}a_L
\left[\tau_{k_1}\prod\limits_{i\in I}\tau_{d_i}\right]_{g^\prime,|I|+1}\\
&\times\left[\tau_{k_2}\prod\limits_{i\in J}\tau_{d_i}\right]_{g-g^\prime,|J|+1}.
\end{align*}
Through comparing the coefficients of $\mathcal{A}_{\textbf{d}},\mathcal{B}_{\textbf{d}}$ and $\mathcal{C}_{\textbf{d}}$, it is not hard to see that (see \cite[Equation 3.6]{MZ15})
\begin{align}\label{e-ap-2}
\left[\prod\limits_{i=1}^n\tau_{d_i}\right]_{g,n}\leq V_{g,n}.
\end{align}

Now we prove the following result.
\begin{lemma}\label{l-ap}
Assume $n\prec\sqrt g$, and set $\textbf{d}=(d_1,...,d_n)$ and $\textbf{d}^\prime=(d_1^\prime,...,d_n^\prime)=(0,d_2,...,d_n)$. Then there exists a constant $C>0$ such that
$$0\leq\left[\prod\limits_{i=1}^n\tau_{d_i^\prime}\right]_{g,n}-
\left[\prod\limits_{i=1}^n\tau_{d_i}\right]_{g,n}\leq
\frac{C|\textbf{d}|(|\textbf{d}|+n)}{g}\cdot V_{g,n}.$$
\end{lemma}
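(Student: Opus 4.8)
The plan is to apply Mirzakhani's Recursive Formula (the expression of $\left[\prod_i\tau_{d_i}\right]_{g,n}$ in terms of $\mathcal{A},\mathcal{B},\mathcal{C}$ recalled above) to both multi-indices $\textbf{d}$ and $\textbf{d}'=(0,d_2,\dots,d_n)$ and to compare them term by term. Since $\textbf{d}$ and $\textbf{d}'$ agree in every slot except the first, the only change is $d_1\mapsto 0$, which simultaneously shifts $d_0=3g-3+n-|\textbf{d}|$ to $d_0'=d_0+d_1$. Thus
\[
\left[\prod_{i=1}^n\tau_{d_i'}\right]_{g,n}-\left[\prod_{i=1}^n\tau_{d_i}\right]_{g,n}=\sum_{j=2}^n\left(\mathcal{A}_{\textbf{d}'}^j-\mathcal{A}_{\textbf{d}}^j\right)+\left(\mathcal{B}_{\textbf{d}'}-\mathcal{B}_{\textbf{d}}\right)+\left(\mathcal{C}_{\textbf{d}'}-\mathcal{C}_{\textbf{d}}\right),
\]
and I would estimate the three groups separately. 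Each difference is non-negative because $\{a_L\}$ is increasing (Lemma \ref{l-ai}) and all intersection numbers are non-negative, so what is sought is really an upper bound on a non-negative quantity.

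For the merging and handle terms I would first reindex by $M=L+d_1$, so that the index $\tau_{d_j+M-1}$ (respectively the constraint $k_1+k_2=M-2$) matches in both the $\textbf{d}$ and $\textbf{d}'$ expansions; the weight then becomes $a_M$ for $\textbf{d}'$ and $a_{M-d_1}$ for $\textbf{d}$, and the $M$-range for $\textbf{d}'$ contains the extra block $0\le M<d_1$. This splits each difference into a ``new low-$M$ block'' and a ``shifted-weight block'' with coefficient $a_M-a_{M-d_1}\ge0$. Bounding every intersection number by the corresponding total volume via \eqref{e-ap-2} and using $a_M\le1$, the estimate reduces to elementary sums: $\sum_{M=0}^{d_1-1}1=d_1$; the unweighted tail $\sum_{M\ge d_1}(a_M-a_{M-d_1})=\sum_{L=0}^{d_1-1}(1-a_L)\prec1$ from the geometric decay in Lemma \ref{l-ai}; and, where the constraint $k_1+k_2=M-2$ supplies a factor $M-1$, the weighted sum $\sum_{M\ge d_1}(M-1)(a_M-a_{M-d_1})\prec d_1$ from \eqref{e-ap-1} together with $\sum_{M=0}^{d_1-1}(M-1)^{+}\prec d_1^2$. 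This yields $\sum_{j=2}^n(\mathcal{A}_{\textbf{d}'}^j-\mathcal{A}_{\textbf{d}}^j)\prec(d_1+1)\bigl(\sum_{j=2}^n(2d_j+1)\bigr)V_{g,n-1}\prec(d_1+1)(|\textbf{d}|+n)V_{g,n-1}$ for the $\mathcal{A}$-block, and $\mathcal{B}_{\textbf{d}'}-\mathcal{B}_{\textbf{d}}\prec d_1^2\cdot V'$ for the $\mathcal{B}$-block, where $V'$ is the subordinate total volume, whose Euler-characteristic parameter is $\chi-1$. Finally, since $n\prec\sqrt g$, Lemma \ref{l-mirz}(2) together with the asymptotics of Lemmas \ref{l-cor-1}--\ref{l-cor-2} and Theorem \ref{t1-MZ} give that every volume of parameter $\chi-1$ (both $V_{g,n-1}$ and the $\mathcal{B}$-term volume) is $\asymp V_{g,n}/g$. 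As $d_1\le|\textbf{d}|$, the $\mathcal{A}$-block already produces the full scaling $\tfrac{|\textbf{d}|(|\textbf{d}|+n)}{g}V_{g,n}$, while the $\mathcal{B}$-block gives only $\tfrac{|\textbf{d}|^2}{g}V_{g,n}$, which is dominated.

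The main obstacle is the splitting term $\mathcal{C}$. The same reindex-and-split, after bounding each factor $[\tau_{k_1}\prod_{I}\tau_{d_i}]_{g',|I|+1}$ and $[\tau_{k_2}\prod_{J}\tau_{d_i}]_{g-g',|J|+1}$ by its total volume through \eqref{e-ap-2}, reduces the difference to $\mathcal{C}_{\textbf{d}'}-\mathcal{C}_{\textbf{d}}\prec d_1^2\,\Sigma$, where
\[
\Sigma=\sum_{\substack{I\sqcup J=\{2,\dots,n\}\\0\le g'\le g}}V_{g',|I|+1}\,V_{g-g',|J|+1}.
\]
The crux is to show $\Sigma\prec V_{g,n}/g$; combined with $d_1^2\le|\textbf{d}|^2\le|\textbf{d}|(|\textbf{d}|+n)$ this closes the whole estimate. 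To prove the bound on $\Sigma$ I would group by $|I|=s$ (a factor $\binom{n-1}{s}$) and by $g'$, and invoke the explicit large-genus asymptotics (Theorem \ref{t1-MZ}, Lemmas \ref{l-cor-1} and \ref{l-cor-2}): the products $V_{g',s+1}V_{g-g',n-s}$ are super-exponentially concentrated on the ``one small piece'' configurations — e.g. $g'=0$, $s=2$ contributes $\binom{n-1}{2}V_{0,3}V_{g,n-2}\asymp n^2 V_{g,n}/g^2\prec V_{g,n}/g$ since $n\prec\sqrt g$ — while balanced splittings are suppressed by a factor of order $\binom{2k}{k}^{-1}$. Carrying out this concentration estimate cleanly, namely checking that each admissible configuration contributes $\prec V_{g,n}/g$ and that their number is controlled, is the genuinely technical step, of exactly the volume-convolution type appearing in \cite{MZ15}. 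The degenerate case $|\textbf{d}|=0$ (where $\textbf{d}=\textbf{d}'$) is trivial, and the finitely many small $g$ are absorbed into the implied constant $C$.
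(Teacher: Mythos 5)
Your proposal is correct and takes essentially the same route as the paper: apply Mirzakhani's recursion to both $\textbf{d}$ and $\textbf{d}'$, compare the $\mathcal{A}$, $\mathcal{B}$, $\mathcal{C}$ terms after reindexing, use the monotonicity of $\{a_L\}$ together with \eqref{e-ap-1} and \eqref{e-ap-2}, bound the subordinate volumes at Euler-characteristic parameter $\chi-1$ by $V_{g,n}/g$, and control the splitting term by the volume-convolution sum $\sum_{s,g'}\binom{n-1}{s}V_{g',s+1}V_{g-g',n-s}$, which the paper records as $O\left(\frac{n^2}{g^2}\right)V_{g,n}$ (whence your $\Sigma\prec V_{g,n}/g$ via $n\prec\sqrt g$). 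Your handling of the shifted range $d_0'=d_0+d_1$ and the identification of the $\mathcal{B}$-volume as sitting at parameter $\chi-1$ are, if anything, more careful than the paper's outline, which likewise defers the convolution estimate to Lemmas \ref{l-cor-1} and \ref{l-cor-2}.
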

\begin{proof}
Since the proof is similar to the proofs of \cite[Equation 3.23]{Mirz13} and \cite[Lemma 5.1]{MZ15}, we only give an outline here. For the LHS inequality, it directly follows by comparing the terms $\mathcal{A}_{\textbf{d}}^j$, $\mathcal{B}_{\textbf{d}}^j$ and $\mathcal{C}_{\textbf{d}}^j$ with $\mathcal{A}_{\textbf{d}^\prime}^j$, $\mathcal{B}_{\textbf{d}^\prime}^j$ and $\mathcal{C}_{\textbf{d}^\prime}^j$ respectively in the Recursive Formula above. For the RHS inequality, $\mathrm{WLOG}$ one may first assume that $d_1> 0$.  Then compare the terms $\mathcal{A},\mathcal{B}$ and $\mathcal{C}$ respectively, it follows from Lemma \ref{l-ai} and \eqref{e-ap-2} that for any fixed $2\leq j\leq n$,
\begin{align*}
&\ \ \ \ \mathcal{A}_{\textbf{d}^\prime}^j-\mathcal{A}_{\textbf{d}}^j\\
&=8\sum\limits_{L=0}^{d_0}(2d_j+1)a_L\left[\tau_{d_j+L-1}\cdot\prod\limits_{i\neq1,j}\tau_{d_i}\right]_{g,n-1}\\
&-8\sum\limits_{L=0}^{d_0}(2d_j+1)a_L\left[\tau_{d_1+d_j+L-1}\cdot\prod\limits_{i\neq1,j}\tau_{d_i}\right]_{g,n-1}\\
&=8(2d_j+1)\sum\limits_{t=d_j-1}^{d_0+d_j-1}a_{t-(d_j-1)}\left[\tau_t\prod\limits_{i\neq1,j}\tau_{d_i}\right]_{g,n-1}\\
&-8(2d_j+1)\sum\limits_{t=d_1+d_j-1}^{d_0+d_1+d_j-1}a_{t-(d_1+d_j-1)}\left[\tau_t\prod\limits_{i\neq1,j}\tau_{d_i}\right]_{g,n-1}\\
&\leq 8d_1(2d_j+1)V_{g,n-1}+\sum\limits_{t=d_1+d_j-1}^{d_0+d_j-1}(a_{t-(d_j-1)}-a_{t-(d_1+d_j-1)})V_{g,n-1}\\
&\prec\frac{d_1(2d_j+1)}{g}\cdot V_{g,n} \prec\frac{\max_{1\leq i \leq n}d_i \cdot (2d_j+1)}{g}\cdot V_{g,n}.
\end{align*}
This leads to
\[\sum\limits_{j=2}^n(\mathcal{A}_{\textbf{d}^\prime}^j-\mathcal{A}_{\textbf{d}}^j)\prec \frac{\max_{1\leq i \leq n}d_i \cdot(|\textbf d|+n)}{g}\cdot V_{g,n}\prec \frac{|\textbf{d}|(|\textbf{d}|+n)}{g}\cdot V_{g,n}.\]

\noindent Similarly, by applying \eqref{e-ap-1} and \eqref{e-ap-2}, we have
\begin{align*}
\mathcal{B}_{\textbf{d}^\prime}-\mathcal{B}_{\textbf{d}}\prec\frac{|\textbf{d}|^2}{g}\cdot V_{g,n}.
\end{align*}

\noindent For the term $\mathcal{C}$, first by Lemma \ref{l-cor-1} and \ref{l-cor-2} it is not hard to see that
$$\sum\limits_{(g_1,n_1,g_2,n_2)} \binom{n-1}{n_1}\cdot\frac{V_{g_1,n_1+1}V_{g_2,n_2+1}}{V_{g,n}}=O\left(\frac{n^2}{g^2}\right)$$
where $(g_1,n_1,g_2,n_2)$ runs over all pairs of non-negative integers such that
$$g_1+g_2=g\text{ and }n_1+n_2=n-1.$$
Then similar as the proof for $\mathcal{A}_{\textbf{d}^\prime}^j-\mathcal{A}_{\textbf{d}}^j$, by applying \eqref{e-ap-1} and \eqref{e-ap-2} one may have
$$\mathcal{C}_{\textbf{d}^\prime}-\mathcal{C}_{\textbf{d}}\prec\frac{n^2|\textbf{d}|^2}{g^2}\cdot V_{g,n}\prec\frac{|\textbf{d}|^2}{g}\cdot V_{g,n}.$$
Then the conclusion follows by taking a summation over all $\mathcal{A}_{\textbf{d}}^j,\ \mathcal{B}_{\textbf{d}}$ and $\mathcal{C}_{\textbf{d}}$.
\end{proof}
\begin{lemma}\label{l-int}
 Assume $n(g)\prec\sqrt{g}$ and $k\in\mathbb{N}$, then there exists a constant $C>0$ such that for any $\textbf{d}=(d_1,...,d_k)$ with $d_i\geq 0$ and $|\textbf{d}|=d_1+...+d_k$,
$$0\leq 1-\frac{\left[\prod\limits_{i=1}^k\tau_{d_i}\cdot\tau_{0}^{n-k}\right]_{g,n}}{V_{g,n}}\leq \frac{Ck\left(|\textbf{d}|^2+n|\textbf{d}|\right)}{g}.$$
\end{lemma}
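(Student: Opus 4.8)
The plan is to deduce the statement from Lemma \ref{l-ap} by a telescoping argument that zeros out the nonzero entries $d_1,\dots,d_k$ one at a time. First I would isolate two facts. The lower bound $0\le 1-\frac{[\prod_i\tau_{d_i}\cdot\tau_0^{\,n-k}]_{g,n}}{V_{g,n}}$ is precisely the inequality \eqref{e-ap-2}, so nothing new is needed there. For the normalization, I would set all $x_i=0$ in \eqref{e-vol}: every monomial $\prod_i x_i^{2d_i}$ vanishes except the one with $\mathbf d=\mathbf 0$, whence $[\tau_0^{\,n}]_{g,n}=V_{g,n}(0,\dots,0)=V_{g,n}$. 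This identifies $V_{g,n}$ with the ``all-zero'' intersection number and lets me rewrite the upper bound purely as a difference of intersection numbers.

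For the telescope, write $[\mathbf d]_{g,n}:=[\prod_{i=1}^n\tau_{d_i}]_{g,n}$ and recall that these numbers are symmetric under permutation of the entries of $\mathbf d$ (the marked points are interchangeable), so the conclusion of Lemma \ref{l-ap} bounds the increase produced by zeroing \emph{any} one coordinate, not merely the first. I would then define length-$n$ tuples $\mathbf d^{(j)}$ for $0\le j\le k$ by setting the first $j$ of the entries $d_1,\dots,d_k$ equal to zero, so that $\mathbf d^{(0)}=(d_1,\dots,d_k,0,\dots,0)$ and $\mathbf d^{(k)}=\mathbf 0$. This gives
$$V_{g,n}-\Big[\prod_{i=1}^k\tau_{d_i}\,\tau_0^{\,n-k}\Big]_{g,n}=\sum_{j=1}^k\Big([\mathbf d^{(j)}]_{g,n}-[\mathbf d^{(j-1)}]_{g,n}\Big),$$
in which each summand records the effect of zeroing a single coordinate of $\mathbf d^{(j-1)}$.

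Applying the (permutation-symmetric form of the) bound in Lemma \ref{l-ap} to each step yields
$$[\mathbf d^{(j)}]_{g,n}-[\mathbf d^{(j-1)}]_{g,n}\le \frac{C\,\lvert\mathbf d^{(j-1)}\rvert\,(\lvert\mathbf d^{(j-1)}\rvert+n)}{g}\,V_{g,n}.$$
Since $\mathbf d^{(j-1)}$ is obtained from $\mathbf d$ only by setting entries to zero, I have $\lvert\mathbf d^{(j-1)}\rvert\le\lvert\mathbf d\rvert$, so every summand is at most $\frac{C(\lvert\mathbf d\rvert^2+n\lvert\mathbf d\rvert)}{g}V_{g,n}$; summing the $k$ terms and dividing by $V_{g,n}$ produces the claimed upper bound after relabeling $C$. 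I do not expect a real obstacle here, as this is a clean telescope built on Lemma \ref{l-ap}; the only two points needing care are the appeal to permutation-symmetry of the $\psi$-intersection numbers (so that an arbitrary coordinate may be zeroed) and the monotonicity $\lvert\mathbf d^{(j-1)}\rvert\le\lvert\mathbf d\rvert$ that keeps each telescope term under the single uniform bound. Should one wish to avoid invoking symmetry explicitly, the same conclusion follows by zeroing entries in the order dictated literally by Lemma \ref{l-ap}, since the left-hand side of the statement is itself symmetric in $d_1,\dots,d_k$.
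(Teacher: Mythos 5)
Your proposal is correct and is essentially the paper's own proof: the paper likewise handles the lower bound via \eqref{e-ap-2} and establishes the upper bound by the same telescope that zeros the coordinates $d_1,\dots,d_k$ one at a time, bounding each step by Lemma \ref{l-ap} (its displays \eqref{e-ap-3} and \eqref{e-ap-4}) with the same monotonicity $|\textbf{d}^{(j-1)}|\leq|\textbf{d}|$ and summing the $k$ terms. Your two points of care --- the permutation symmetry of the intersection numbers needed to zero an arbitrary coordinate, and the identification $\left[\tau_0^{n}\right]_{g,n}=V_{g,n}$ from \eqref{e-vol} --- are left implicit in the paper, so making them explicit is a mild improvement rather than a different route.
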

\begin{proof}
The $\mathrm{LHS}$ inequality clearly follows from \eqref{e-ap-2}. For the $\mathrm{RHS}$ inequality, it follows from Lemma \ref{l-ap} that for any $1 \leq j\leq k-1$,
\begin{align}\label{e-ap-3}
\left[\prod\limits_{i=1}^j\tau_{d_i}\cdot\tau_0^{n-j}\right]_{g,n}-
\left[\prod\limits_{i=1}^{j+1}\tau_{d_i}\cdot\tau_0^{n-j-1}\right]_{g,n}\leq
\frac{C|\textbf{d}|(|\textbf{d}|+n)}{g}\cdot V_{g,n},
\end{align}
and
\begin{align}\label{e-ap-4}
V_{g,n}-\left[\tau_{d_1}\cdot\tau_0^{n-1}\right]_{g,n}\leq\frac{C|\textbf{d}|(|\textbf{d}|+n)}{g}\cdot V_{g,n}.
\end{align}
Then the conclusion follows by taking a summation of \eqref{e-ap-3} over all $1\leq j\leq k-1$ and \eqref{e-ap-4}.
\end{proof}

The following proposition is essentially due to \cite[Proposition 31]{MP19} and \cite[Lemma 20]{NWX20}.
\begin{proposition}\label{p-vol-1}
Assume $n(g)\prec\sqrt{g}$ and $k\in\mathbb{N}$, then for any $x_i\geq 0$,
$$\prod\limits_{i=1}^k\frac{\sinh x_i}{x_i}\cdot \left(1+O\left(\frac{nk^2\sum_{i=1}^k x_i^2}{g}\right)\right)=\frac{V_{g,n}(2x_1,...,2x_k,0,...,0)}{V_{g,n}}\leq \prod\limits_{i=1}^k\frac{\sinh x_i}{x_i},$$
where the implied constant is independent of $g,n,k$ and $x_i$.
\end{proposition}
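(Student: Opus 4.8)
The plan is to expand both sides as power series in the $x_i$ and to compare them coefficient by coefficient. First I would substitute $x_{k+1}=\cdots=x_n=0$ into Mirzakhani's expansion \eqref{e-vol}; since $0^{2d}$ annihilates every term carrying a nonzero exponent in a vanishing variable, the expansion collapses to
$$\frac{V_{g,n}(2x_1,\ldots,2x_k,0,\ldots,0)}{V_{g,n}}=\sum_{\textbf{d}\in\mathbb{Z}_{\ge 0}^k}\frac{\left[\prod_{i=1}^k\tau_{d_i}\cdot\tau_0^{n-k}\right]_{g,n}}{V_{g,n}}\prod_{i=1}^k\frac{x_i^{2d_i}}{(2d_i+1)!},$$
where the intersection number is understood to vanish once $|\textbf{d}|>3g-3+n$. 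On the other hand the classical expansion $\frac{\sinh x}{x}=\sum_{d\ge 0}\frac{x^{2d}}{(2d+1)!}$ gives $\prod_{i=1}^k\frac{\sinh x_i}{x_i}=\sum_{\textbf{d}}\prod_i\frac{x_i^{2d_i}}{(2d_i+1)!}$ over the same index set. The upper bound is then immediate: by \eqref{e-ap-2} every ratio $[\prod\tau_{d_i}\tau_0^{n-k}]_{g,n}/V_{g,n}$ is at most $1$, so comparing the two series term by term (all coefficients being nonnegative) yields the right-hand inequality.

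For the error term I would estimate the nonnegative difference
$$\prod_i\frac{\sinh x_i}{x_i}-\frac{V_{g,n}(2x_1,\ldots,2x_k,0,\ldots,0)}{V_{g,n}}=\sum_{\textbf{d}}\Bigl(1-\tfrac{[\prod\tau_{d_i}\tau_0^{n-k}]_{g,n}}{V_{g,n}}\Bigr)\prod_i\frac{x_i^{2d_i}}{(2d_i+1)!}.$$
The governing estimate is Lemma \ref{l-int}, which bounds each coefficient by $\frac{Ck(|\textbf{d}|^2+n|\textbf{d}|)}{g}$ when $|\textbf{d}|\le 3g-3+n$. Crucially the \emph{same} bound holds for every $\textbf{d}$: once $|\textbf{d}|>3g-3+n$ the intersection number vanishes so the factor equals $1$, while for large $g$ the right-hand side already exceeds $1$ there. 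Hence the whole difference is $\le\frac{Ck}{g}\sum_{\textbf{d}}(|\textbf{d}|^2+n|\textbf{d}|)\prod_i\frac{x_i^{2d_i}}{(2d_i+1)!}$, and it remains only to control these two weighted sums.

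The main computation, and the step I expect to carry the most weight, is evaluating $\sum_{\textbf{d}}|\textbf{d}|\prod$ and $\sum_{\textbf{d}}|\textbf{d}|^2\prod$. Here I would use the single-variable identity $\sum_d d\,\frac{x^{2d}}{(2d+1)!}=\frac12\bigl(\cosh x-\frac{\sinh x}{x}\bigr)$ together with the elementary fact that both $\sum_d d\,\frac{x^{2d}}{(2d+1)!}$ and $\sum_d d^2\,\frac{x^{2d}}{(2d+1)!}$ are $\prec x^2\frac{\sinh x}{x}$ (each is a continuous function of $x$ whose ratio to $x^2\frac{\sinh x}{x}$ has finite limits as $x\to 0$ and $x\to\infty$, hence is bounded on $[0,\infty)$). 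The decisive trick is to avoid the cross terms that a naive expansion of $|\textbf{d}|^2=(\sum_j d_j)^2$ would generate: applying Cauchy--Schwarz in the form $|\textbf{d}|^2\le k\sum_j d_j^2$ factorizes the sum over the $k$ independent indices and gives $\sum_{\textbf{d}}|\textbf{d}|^2\prod\prec k(\sum_j x_j^2)\prod_i\frac{\sinh x_i}{x_i}$, while the linear sum similarly gives $\sum_{\textbf{d}}|\textbf{d}|\prod\prec(\sum_j x_j^2)\prod_i\frac{\sinh x_i}{x_i}$. Substituting back, the difference is $\prec\frac{k(k+n)}{g}(\sum_j x_j^2)\prod_i\frac{\sinh x_i}{x_i}$, and since $k(k+n)\le 2nk^2$ for $n,k\ge 1$ this is precisely $O\!\left(\frac{nk^2\sum_j x_j^2}{g}\right)\prod_i\frac{\sinh x_i}{x_i}$; dividing by $\prod_i\frac{\sinh x_i}{x_i}$ converts the two-sided estimate into the claimed $1+O(\cdot)$ factor. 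The one point requiring genuine care is this Cauchy--Schwarz step: without it the cross terms produce a factor $(\sum_j x_j^2)^2$ that cannot be absorbed into the target error uniformly in the $x_i$, so recognizing $|\textbf{d}|^2\le k\sum_j d_j^2$ is exactly what keeps the implied constant independent of $x_i$.
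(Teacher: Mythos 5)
Your proposal is correct and takes essentially the same route as the paper: the paper's own proof of Proposition \ref{p-vol-1} is just a pointer to the argument of \cite[Lemma 20]{NWX20} with Lemma \ref{l-int} substituted for Mirzakhani's fixed-$n$ coefficient estimate, which is precisely the argument you wrote out (expand via \eqref{e-vol}, use \eqref{e-ap-2} for the upper bound and Lemma \ref{l-int} coefficientwise, then resum the $\sum_d d\,x^{2d}/(2d+1)!$ and $\sum_d d^2\,x^{2d}/(2d+1)!$ series). Your observation that one must use $|\textbf{d}|^2\le k\sum_j d_j^2$ rather than expanding the cross terms is exactly the bookkeeping needed to keep the implied constant uniform in the $x_i$, so the write-up faithfully fills in the details the paper leaves to the reader.
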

\begin{proof}
The proof is identical to the proof of \cite[Lemma 20]{NWX20}, if one applies Lemma \ref{l-int} in place of \cite[p.286]{Mirz13}. We leave it to interested readers.
\end{proof}

\section{Two types of separating simple closed multi-curves}\label{multi-curve}
In this section, we first introduce two special types of separating simple closed multi-curves, and then calculate the expected values of related random variables. We always assume $g,\ n\geq 0$ with $2g-2+n\geq 1$ and $S_{g,n}$ is an orientable surface of genus $g$ with $n$ punctures denoted by $\{p_i\}_{i=1}^n$.
\subsection{Separating simple closed multi-curves}
Assume  $\Gamma=\sum_{i=1}^k\gamma_i$ is a simple closed multi-curve on $S_{g,n}$, where the $\gamma^\prime_i$s are pairwisely disjoint homotopy classes of non-trivial and non-peripheral simple closed curves on $S_{g,n}$. Recall that for any element $h\in\textnormal{Mod}_{g,n}$, we have $h(p_j)=p_j$ for all $1\leq j \leq n$. It follows that the orbit $\mathcal{O}_\Gamma=\textnormal{Mod}_{g,n}\cdot\Gamma$ does not only depend on the topology of $S_{g,n}\setminus\Gamma$, but also depends on the way that $\Gamma$ separates the set of all punctures of $S_{g,n}$. For $1\leq k\leq g+1$, we consider a pair $\mathfrak{I}=(J_1,J_2,g_1,g_2,k)$ which satisfies the following properties:
 \begin{enumerate}[(a)]
\item $J_1$ and $J_2$ are disjoint subsets (maybe empty) of $\{1,2,...,n\}$ such that
$$\{1,2,...,n\}=J_1\sqcup J_2.$$
\item $g_1$ and $g_2$ are non-negative integers such that
$$g=g_1+g_2+k-1.$$
\item $1\leq 2g_1-2+|J_1|+k\leq 2g_2-2+|J_2|+k$ where $|J_i|$ is the cardinality of $J_i$.
\end{enumerate}
Then $\mathfrak{I}=(J_1,J_2,g_1,g_2,k)$ induces a simple closed multi-curve denoted by $\Gamma(\mathfrak{I})=\sum_{i=1}^k\gamma_i$ in $S_{g,n}$ such that
$$S_{g,n}\setminus\Gamma(\mathfrak{I})\cong S_{g_1,|J_1|+k}\cup S_{g_2,|J_2|+k},$$
where the set $\{p_j\}_{j\in J_1}$ of punctures is contained in the part $S_{g_1,|J_1|+k}$, and $\{p_j\}_{j\in J_2}$ is contained in the part $S_{g_2,|J_2|+k}$.
\begin{remark}\label{r-1}
\begin{enumerate}
\item Assume $\mathfrak{I}=(J_1,J_2,g_1,g_2,k)$ and $\mathfrak{I}^\prime=(J_1^\prime,J_2^\prime,g_1^\prime,g_2^\prime,k^\prime)$ are two pairs, then $\Gamma(\mathfrak{I})$ and $\Gamma(\mathfrak{I}^\prime)$ are in the same orbit under $\textnormal{Mod}_{g,n}$-action if and only if $k=k^\prime$ and $\left\{(J_1,g_1),(J_2,g_2)\right\}=\left\{(J_1^\prime,g_1^\prime),(J_2^\prime,g_2^\prime)\right\}$.

\item For $\mathfrak{I}=(J_1,J_2,g_1,g_2,k)$, set 
\[n_1=|J_1|+k \textit{ and } n_2=|J_2|+k.\] Then $(g_1,n_1,g_2,n_2)$ is a pair of non-negative integers contained in $\bigcup_{m=1}^{\lfloor \frac{\chi}{2}\rfloor}\mathcal{I}_m$, where $\chi=2g-2+n>1$ and $\mathcal{I}_m \ (1\leq m\leq \lfloor \frac{\chi}{2}\rfloor)$ is defined at the beginning of Subsection \ref{sub-I}.
Conversely, for any pair $(g_1,n_1,g_2,n_2)\in\bigcup_{m=1}^{\lfloor\frac{\chi}{2}\rfloor}\mathcal{I}_m$, one can randomly choose $|J_1|=n_1-k$ punctures in all $n$ punctures, so there exist at most $\binom{n}{n_1-k}$ different $\mathfrak{I}'s$ corresponding to it, where $1\leq k=\frac{n_1+n_2-n}{2}\leq n_1$.
\end{enumerate}
\end{remark}
Now we define two special separating simple closed multi-curves.
\begin{Def*}
\ben
\item[(i)] Assume $1\leq i<j\leq n$ and $\mathfrak{I}=(\{i,j\},\{1,2,...,n\}\setminus\{i,j\},0,g,1)$. We define $\Gamma_{ij}=\Gamma(\mathfrak{I})\subseteq S_{g,n}$ to be a simple closed curve bounding punctures $p_i$ and $p_j$, i.e., 
 $$S_{g,n}\setminus\Gamma(\mathfrak{I})\cong S_{0,3}\cup S_{g,n-1}$$
where the part $S_{0,3}$ contains punctures $p_i$ and $p_j$.

\item[(ii)] Assume $k\geq 1$ and $\{i_1,j_1, \cdots, i_k, j_k\}$ is a set of mutually distinct integers in $\{1,2,...,n\}$. Set
$$\mathfrak{P}=(i_1,j_1,...,i_k,j_k).$$
We define $\Gamma_{\mathfrak{P}}=\sum_{m=1}^k\gamma_m$ to be a simple closed multi-curve such that for all $1\leq m\leq k$, $\gamma_m\in\textnormal{Mod}_{g,n}\cdot\Gamma_{i_mj_m}$, i.e., 
$$S_{g.n}\setminus\Gamma_\mathfrak{P}\cong\left(\bigcup\limits_{m=1}^k S_{0,3}^m\right)\cup S_{g,n-k}$$
where each $S_{0,3}^m$, which is homeomorphic to $S_{0,3}$, contains punctures $p_{i_m}$ and $p_{j_m}$ for all $1\leq m \leq k$.
\een
\end{Def*}
(See Figure \ref{fig:00} for an illustration).
\begin{figure}[ht]
\centering
\includegraphics[width=0.90\textwidth]{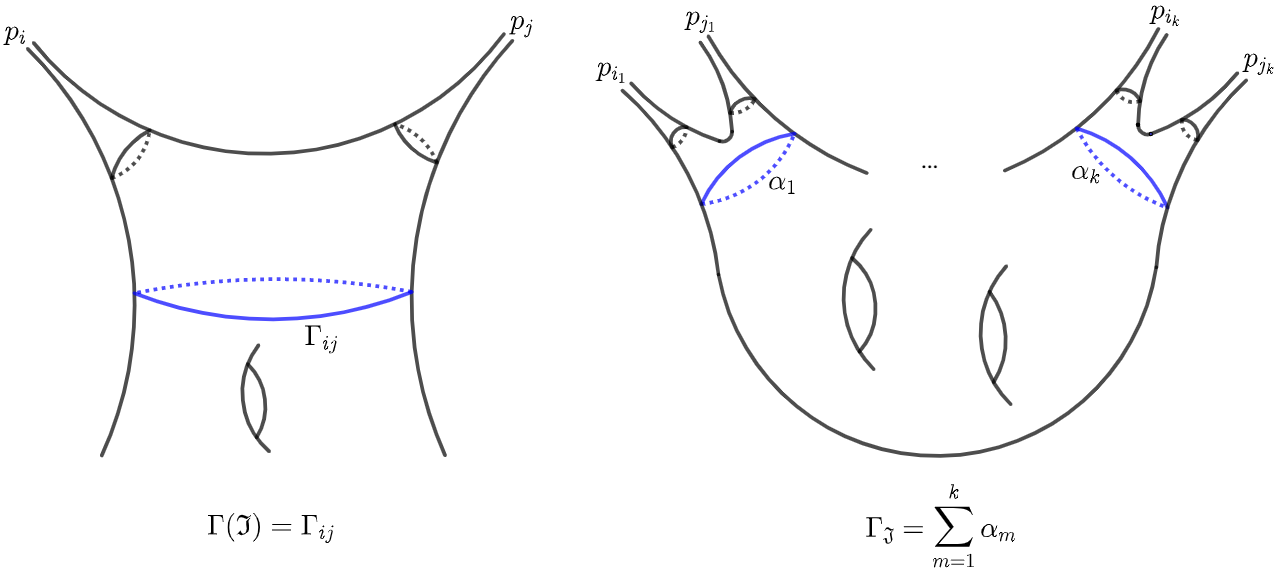}
\caption{Two types of separating simple closed multi-curves}
\label{fig:00}
\end{figure}

\begin{remark}\label{r-2}
For $\mathfrak{P}=(i_1,j_1,...,i_k,j_k)$ and $\mathfrak{P}'=(i_1',j_1',...,i_k',j_k')$, we say $\mathfrak{P}\sim\mathfrak{P}'$ if 
$$\{i_m,j_m\}=\left\{i_{m}',j_{m}'\right\}\text{ for all }1\leq m\leq k.$$
Then for fixed $k\geq 1$, the number of equivalent classes of pairs of form $(i_1,j_1,...,i_k,j_k)$ is equal to 
\[\binom{n}{2}\times \binom{n-2}{2} \times \cdots \times  \binom{n-2(k-1)}{2}=\frac{n!}{2^k\cdot(n-2k)!}.\]
\end{remark}
\subsection{Random variables}
Let $X\in\mathcal{T}_{g,n}$ be a hyperbolic surface. For a simple closed multi-curve $\Gamma=\sum\limits_{i=1}^k\gamma_i$ on $S_{g,n}$, its length $\ell_\Gamma(X)$ is defined as
$$\ell_\Gamma(X)=\sum\limits_{i=1}^k \ell_{\gamma_i}(X).$$
 Similar as \cite{Mirz13,NWX20}, for any $X\in\mathcal{M}_{g,n}$ and $L>0$, the counting functions $N_\Gamma(X,L)$ and $N_\Gamma^{\infty}(X,L)$ are defined as
$$N_\Gamma(X,L)\overset{\textnormal{def}}{=}\#\{\alpha\in\mathcal{O}_\Gamma;\ \ell_\alpha(X)\leq L\}$$
and
$$N_\Gamma^{\infty}(X,L)\overset{\textnormal{def}}{=}\#\left\{\sum\limits_{i=1}^k\gamma_i\in\mathcal{O}_\Gamma;\ \ell_{\gamma_i}(X)\leq L\text{ for all }1\leq i\leq k\right\}.$$
\noindent Now we define
$$\mathcal{N}_{(0,3)}^{(g,n-1)}(X,L) \overset{\textnormal{def}}{=} \left\{\begin{matrix}&\text{simple closed geodesic }\alpha \text{ in }X\text{ such that }\\ &X\setminus\alpha\cong S_{0,3}\cup S_{g,n-1}\text{ and }\ell_{\alpha}(X)\leq L\end{matrix}\right\}$$
and $$N_{(0,3)}^{(g,n-1)}(X,L) \overset{\textnormal{def}}{=} \#\mathcal{N}_{(0,3)}^{(g,n-1)}(X,L).$$
Using the notations in Subsection \ref{sec-prob}, for $k\geq 1$ we define
$$N_{(0,3),k}^{(g,n-k)}(X,L)\overset{\textnormal{def}}{=}\left(N_{(0,3)}^{(g,n-1)}(X,L)\right)_k.$$
Then $N_{(0,3),k}^{(g,n-1)}(X,L)$ is equal to the number of \emph{ordered} pair $(\alpha_1,...,\alpha_k)$, where the $\alpha_i's \ (1\leq i\leq k)$ are mutually distinct simple closed geodesics in $\mathcal{N}_{(0,3)}^{(g,n-1)}(X,L)$. Recall that the Collar Lemma (see \eg \cite[Chapter 4]{Buser-book}) tells that any two distinct simple closed geodesics in $X$ of lengths less than $2\arcsinh 1$ have no intersection. It follows that if $L<2\arcsinh 1$, then
\begin{align}\label{e-3}
N_{(0,3),k}^{(g,n-k)}(X,L)=\sum\limits_{\mathfrak{P}}N_{\Gamma_\mathfrak{P}}^\infty(X,L),
\end{align}
where $\mathfrak{P}$ runs over all different equivalent classes of pairs $(i_1,j_1,...,i_k,j_k)$ of mutually distinct integers in $\{1,2,...,n\}$. Then for any $k\geq 1$, $$N_{(0,3),k}^{(g,n-k)}(X,L):\mathcal{M}_{g,n}\to\mathbb{N}\cup\{0\}$$ is a random variable under the Weil-Petersson probability measure $\textnormal{Prob}_{\textnormal{WP}}^{g,n}$.

Now we prove two estimates on expected values which will be applied later.
 \begin{lemma}\label{l-exp}
For any fixed $0<L<2\arcsinh 1$ and $k\geq 1$, if $n\prec\sqrt g$, then for large enough $g$ we have
 $$\mathbb{E}_{\textnormal{WP}}^{g}\left[N_{(0,3),k}^{(g,n-k)}(X,L)\right]=\prod\limits_{i=0}^{2k-1}(n-i)\cdot\frac{V_{g,n-k}}{V_{g,n}}\cdot \left(2\cosh\frac{L}{2}-2\right)^k\cdot\left(1+O\left(\frac{nk^3L^2}{g}\right)\right),$$
 where the implied constant is independent of $g$ and $n$.
 \end{lemma}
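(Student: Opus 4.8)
The plan is to reduce the computation of this $k$-th factorial moment to $k$ parallel applications of Mirzakhani's Integration Formula—one for each puncture-bounding configuration $\Gamma_{\mathfrak{J}}$—and then to estimate the resulting integrals with Proposition \ref{p-vol-1}. First I would use that $L<2\arcsinh 1$ to invoke the decomposition \eqref{e-3}, which writes $N_{(0,3),k}^{(g,n-1)}(X,L)=\sum_{\mathfrak{J}}N_{\Gamma_{\mathfrak{J}}}^\infty(X,L)$, the sum running over the $\frac{n!}{2^k(n-2k)!}=\frac{1}{2^k}\prod_{i=0}^{2k-1}(n-i)$ equivalence classes $\mathfrak{J}$ counted in Remark \ref{r-2}. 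By linearity of expectation it then suffices to evaluate $\mathbb{E}_{\textnormal{WP}}^{g}[N_{\Gamma_{\mathfrak{J}}}^\infty(X,L)]$ for a single $\mathfrak{J}$ and multiply by this combinatorial count.

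Next, for a fixed $\mathfrak{J}$ the cut surface is $S_{g,n}\setminus\Gamma_{\mathfrak{J}}\cong\big(\bigcup_{m=1}^k S_{0,3}^m\big)\cup S_{g,n-k}$, so the associated volume polynomial factors as $V_{g,n}(\Gamma_{\mathfrak{J}},\boldsymbol{x})=\prod_{m=1}^k V_{0,3}(x_m,0,0)\cdot V_{g,n-k}(x_1,\ldots,x_k,0,\ldots,0)=V_{g,n-k}(x_1,\ldots,x_k,0,\ldots,0)$, using $V_{0,3}\equiv 1$. Since the $k$ bounding curves encircle distinct \emph{labelled} pairs of cusps, and the two sides of each $\gamma_m$ (a thrice-punctured sphere versus the remaining surface) are never homeomorphic, the stabilizer contributes no symmetry, so the Mirzakhani constant is $C_{\Gamma_{\mathfrak{J}}}=1$. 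Applying Theorem \ref{Mirz-for} with $F(\boldsymbol{x})=\prod_{m=1}^k\mathbf{1}_{[0,L]}(x_m)$ then yields $\mathbb{E}_{\textnormal{WP}}^{g}[N_{\Gamma_{\mathfrak{J}}}^\infty]=\frac{1}{V_{g,n}}\int_{[0,L]^k}V_{g,n-k}(x_1,\ldots,x_k,0,\ldots,0)\,x_1\cdots x_k\,d\boldsymbol{x}$.

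The third step is to evaluate this integral. Because $n-k\prec\sqrt g$, Proposition \ref{p-vol-1} applies to $V_{g,n-k}$ (with arguments $x_m/2$), giving $V_{g,n-k}(x_1,\ldots,x_k,0,\ldots,0)=V_{g,n-k}\prod_{m=1}^k\frac{\sinh(x_m/2)}{x_m/2}\big(1+O(\frac{(n-k)k^2\sum x_m^2}{g})\big)$. Multiplying by $\prod x_m$ converts each factor to $2\sinh(x_m/2)$, and since $\int_0^L 2\sinh(x/2)\,dx=4\cosh\frac{L}{2}-4$, the main term is $V_{g,n-k}\,(4\cosh\frac{L}{2}-4)^k$. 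On $[0,L]^k$ one has $\sum x_m^2\le kL^2$ uniformly, so the relative error is uniformly $O(\frac{(n-k)k^3L^2}{g})=O(\frac{nk^3L^2}{g})$ and factors out of the $k$-fold integral via the two-sided bound in Proposition \ref{p-vol-1}. Finally, multiplying by the count $\frac{1}{2^k}\prod_{i=0}^{2k-1}(n-i)$ and using $\frac{1}{2^k}(4\cosh\frac{L}{2}-4)^k=(2\cosh\frac{L}{2}-2)^k$ produces the stated formula; the factors of $2^k$ from the combinatorics and from the integral cancel exactly.

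The computation is essentially mechanical once these ingredients are in place; the two points demanding genuine care are the verification that $C_{\Gamma_{\mathfrak{J}}}=1$—that is, the absence of any symmetry factor, which hinges on the cusps being labelled so that neither the pairs of pants nor the two sides of each $\gamma_m$ can be interchanged—and the uniform control of the Proposition \ref{p-vol-1} error so that it factors cleanly out of the product, yielding the clean exponent $k^3$ rather than a worse power of $k$. The bookkeeping of the $2^k$ factors, which must cancel for the constant $2\cosh\frac{L}{2}-2$ to emerge, is the other place where a slip could occur; the $k=1$ case, where $\binom{n}{2}(4\cosh\frac{L}{2}-4)=n(n-1)(2\cosh\frac{L}{2}-2)$, is a useful consistency check.
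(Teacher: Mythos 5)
Your proposal is correct and follows essentially the same route as the paper's proof: the decomposition \eqref{e-3} over the $\frac{n!}{2^k(n-2k)!}$ equivalence classes from Remark \ref{r-2}, Mirzakhani's Integration Formula (Theorem \ref{Mirz-for}) with $C_{\Gamma_{\mathfrak{J}}}=1$ and $V_{0,3}\equiv 1$, the volume estimate of Proposition \ref{p-vol-1} with uniform error $O\left(\frac{nk^3L^2}{g}\right)$ on $[0,L]^k$, and the integral $\int_0^L 2\sinh\frac{x}{2}\,dx=4\cosh\frac{L}{2}-4$ with the $2^k$ cancellation. Your added justification of $C_{\Gamma_{\mathfrak{J}}}=1$ via the labelled cusps, which the paper merely asserts, and the $k=1$ sanity check are both sound.
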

 \begin{proof}
Define a function $F_k:\mathbb{R}^k_{\geq 0}\to\mathbb{R}_{\geq 0}$ as
$$F_k(x_1,...,x_k)=\prod_{i=1}^k 1_{[0,L]}(x_i).$$
From Proposition \ref{p-vol-1}, for any $0\leq x_1,...,\ x_k\leq L$,
 \begin{align}\label{l-exp-e}
 V_{g,n-k}(x_1,...,x_k,0,...,0)=V_{g,n-k}\cdot\prod\limits_{i=1}^k\frac{\sinh \left(\frac{x_i}{2}\right)}{\frac{x_i}{2}}\cdot \left(1+O\left(\frac{nk^3L^2}{g}\right)\right)
 \end{align}
 where the implied constant is independent of $g$ and $n$. For any $\mathfrak{P}=(i_1,j_1,...,i_k,j_k)$ where $\{i_1,j_1, \cdots, i_k, j_k\}$ is a set of mutually distinct integers in $\{1,2,...,n\}$. Let $\Gamma_{\mathfrak{P}}$ be its corresponding simple closed multi-curve defined above. Write $\Gamma_{\mathfrak{P}}=\sum_{t=1}^k\gamma_t$, where $\gamma_t(1\leq t\leq k)$ is a simple closed geodesic such that for each $t$,
$$S_{g,n}\setminus\gamma_t\cong S_{0,3}\cup S_{g,n-1}$$
 and the two punctures $p_{i_t}$ and $p_{j_t}$ are contained in the part of $S_{0,3}$. Recall that $M(\Gamma)=|\{i;\ \gamma_i\text{ separates off a one-handle from }S_{g,n}\}|$. So it is clear that \begin{align}\label{e-no11}M(\Gamma_{\mathfrak{P}})=0.\end{align} For any $f\in\textnormal{
Stab}(\Gamma_{\mathfrak{P}})$, since it fixes all punctures, it follows that for all $1\leq t\leq k$, $f$ fixes $\gamma_i$ and its orientation. Hence $\textnormal{Stab}(\Gamma_{\mathfrak{P}})=\cap_{t=1}^k\textnormal{Stab}^+(\gamma_t)$, which together with \eqref{e-coff} and \eqref{e-no11} implies that the coffecient $C_{\Gamma_{\mathfrak{P}}}$ in Theorem \ref{Mirz-for} satisfies that
\begin{align}\label{e-j-1}
C_{\Gamma_{\mathfrak{P}}}=\frac{2^{-M(\Gamma_{\mathfrak{P}})}}{|\textnormal{Stab}(\Gamma_{\mathfrak{P}}):\cap_{t=1}^k\textnormal{Stab}^+(\gamma_t)|}=1.
\end{align}
\noindent Now we apply Mirzakhani's Integration Formula, i.e., Theorem \ref{Mirz-for}, to  $F_k$ and $\Gamma_{\mathfrak{P}}$, together with \eqref{e-j-1} and the fact that $V_{0,3}(x,y,z)=1$ for all $x,y,z\geq 0$, by using \eqref{l-exp-e} one may conclude that
\begin{align*}
&\ \ \ \ \mathbb{E}_{\textnormal{WP}}^g\left[N_{\Gamma_{\mathfrak{P}}}^\infty(X,L)\right]\\
&=\frac{1}{V_{g,n}}\underbrace{\int_0^L...\int_0^L}_{k} V_{g,n-k}(x_1,...,x_k,0,..,0)\cdot \left(\prod_{i=1}^k V_{0,3}(0,0,x_i)x_i\right)dx_1...dx_k\\
&=\frac{1}{V_{g,n}}\underbrace{\int_0^L...\int_0^L}_{k} V_{g,n-k}\cdot \left(1+O\left(\frac{nk^3L^2}{g}\right)\right)\cdot \left(\prod_{i=1}^k 2\sinh \left(\frac{x_i}{2}\right)\right)dx_1...dx_k\\
&=\frac{V_{g,n-k}}{V_{g,n}}\cdot\left(\int_0^L 2\sinh\frac{x}{2}dx\right)^k\cdot \left(1+O\left(\frac{nk^3L^2}{g}\right)\right)\\
&=\frac{V_{g,n-k}}{V_{g,n}}\cdot \left(4\cosh\frac{L}{2}-4\right)^k\cdot \left(1+O\left(\frac{nk^3L^2}{g}\right)\right),
\end{align*}
where the implied constant is independent of $g$ and $n$. Together with the assumption that $L<2\arcsinh 1$ and \eqref{e-3}, we have
\begin{align*}
&\mathbb{E}_{\textnormal{WP}}^g\left[N_{(0,3),k}^{(g,n-k)}(X,L)\right]
=\sum\limits_{\mathfrak{P}}\mathbb{E}_{\textnormal{WP}}^g\left[N_{\Gamma_\mathfrak{P}}^{\infty}(X,L)\right]\\
&=\prod\limits_{i=0}^{2k-1} (n-i)\cdot\frac{V_{g,n-k}}{V_{g,n}}\cdot \left(2\cosh\frac{L}{2}-2\right)^k \cdot \left(1+O\left(\frac{nk^3L^2}{g}\right)\right),
\end{align*}
where in the second equation we apply the fact in Remark \ref{r-2} saying that there are $\frac{n!}{2^k\cdot(n-2k)!}$ different equivalent classes of pairs of form $\mathfrak{P}=(i_1,j_1,...,i_k,j_k)$ in $\{1,2, \cdots, n\}$. The proof is complete.
 \end{proof}

Without the assumption that $n\prec\sqrt g$ in Lemma \ref{l-exp}, we have 
 \begin{lemma}\label{l-exp-s}
For any fixed $k\geq 1$ and function $L:\mathbb{N} \cup\{0\}\times \mathbb{N} \cup\{0\} \to\mathbb{R}_{>0}$ with $$\lim\limits_{g+n\to\infty}L(g,n)=0,$$ then for large enough $(g+n)$ we have
 $$\mathbb{E}_{\textnormal{WP}}^{g,n}\left[N_{(0,3),k}^{(g,n-k)}(X,L(g,n))\right]=\prod\limits_{i=0}^{2k-1}(n-i)\cdot\frac{V_{g,n-k}}{V_{g,n}}\cdot \frac{L(g,n)^{2k}}{4^k}\cdot\left(1+O\left(kL(g,n)\right)\right)$$
 where the term $O(kL(g,n))$ is positive and the implied constant is independent of $g$ and $n$.
 \end{lemma}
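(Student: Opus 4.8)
The plan is to follow the proof of Lemma \ref{l-exp} essentially verbatim, replacing the uniform constant $L<2\arcsinh 1$ by the shrinking quantity $L(g,n)\to 0$ and tracking how the various asymptotic expansions simplify. The point is that Lemma \ref{l-exp} already does all the structural work via Mirzakhani's Integration Formula; the only genuine differences here are (i) that we no longer assume $n\prec\sqrt{g}$, so we cannot invoke Proposition \ref{p-vol-1} to control $V_{g,n-k}(x_1,\dots,x_k,0,\dots,0)$, and (ii) that since $L(g,n)\to 0$ the hyperbolic integrals linearize and produce the cleaner factor $L(g,n)^{2k}/4^k$ in place of $(2\cosh\tfrac{L}{2}-2)^k$. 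The main obstacle is therefore item (i): I must find a replacement for the volume-ratio estimate that does not require $n\prec\sqrt{g}$.

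First I would set up the same function $F_k(x_1,\dots,x_k)=\prod_{i=1}^k\mathbf 1_{[0,L(g,n)]}(x_i)$ and the same family of multi-curves $\Gamma_{\mathfrak J}$, and apply Theorem \ref{Mirz-for} with $C_{\Gamma_{\mathfrak J}}=1$ and $V_{0,3}\equiv 1$, yielding
\[
\mathbb{E}_{\textnormal{WP}}^{g,n}\!\left[N_{\Gamma_{\mathfrak J}}^{\infty}(X,L)\right]
=\frac{1}{V_{g,n}}\underbrace{\int_0^{L}\!\!\cdots\!\int_0^{L}}_{k}
V_{g,n-k}(x_1,\dots,x_k,0,\dots,0)\prod_{i=1}^k 2\sinh\!\Big(\tfrac{x_i}{2}\Big)\,dx_1\cdots dx_k,
\]
where I abbreviate $L=L(g,n)$. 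To handle the volume ratio without $n\prec\sqrt g$, I would instead use Part $(1)$ of Lemma \ref{l-mirz}: for $0\le x_i\le L$ we have the two-sided bound $V_{g,n-k}\le V_{g,n-k}(x_1,\dots,x_k,0,\dots,0)\le e^{(x_1+\cdots+x_k)/2}V_{g,n-k}$. Since each $x_i\le L\to 0$, the exponential prefactor satisfies $e^{(x_1+\cdots+x_k)/2}=1+O(kL)$ uniformly, so the weighted volume equals $V_{g,n-k}\bigl(1+O(kL)\bigr)$ with an implied constant independent of $g,n$.

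Next I would evaluate the scalar integral in the small-$L$ regime. Using $2\sinh\frac{x}{2}=x+O(x^3)$, one gets $\int_0^L 2\sinh\frac{x}{2}\,dx=\frac{L^2}{4}\bigl(1+O(L^2)\bigr)$, and raising to the $k$-th power gives $\bigl(\tfrac{L^2}{4}\bigr)^k\bigl(1+O(kL^2)\bigr)=\tfrac{L^{2k}}{4^k}\bigl(1+O(kL)\bigr)$; the $O(L^2)$ error is absorbed into the coarser $O(kL)$ from the volume bound. Combining these two factors yields
\[
\mathbb{E}_{\textnormal{WP}}^{g,n}\!\left[N_{\Gamma_{\mathfrak J}}^{\infty}(X,L)\right]
=\frac{V_{g,n-k}}{V_{g,n}}\cdot\frac{L^{2k}}{4^k}\cdot\bigl(1+O(kL)\bigr).
\]
Finally I would sum over the equivalence classes $\mathfrak J=(i_1,j_1,\dots,i_k,j_k)$. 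Here I must be slightly careful that the small-$L$ hypothesis still lets me identify $N^{(g,n-k)}_{(0,3),k}(X,L)$ with $\sum_{\mathfrak J}N_{\Gamma_{\mathfrak J}}^{\infty}(X,L)$ via the Collar Lemma identity \eqref{e-3}: since $L(g,n)\to 0$, for large enough $g+n$ we have $L(g,n)<2\arcsinh 1$, so \eqref{e-3} applies. By Remark \ref{r-2} there are $\frac{n!}{2^k(n-2k)!}$ classes, and $\frac{n!}{2^k(n-2k)!}\cdot\frac{1}{4^k}=\prod_{i=0}^{2k-1}(n-i)\cdot\frac{1}{8^k}$; I would reconcile the power of two with the target statement's normalization exactly as in Lemma \ref{l-exp}, where the factor $2\cosh\frac{L}{2}-2$ plays the role that $\frac{L^2}{4}$ plays here. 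The only real subtlety to double-check is that the error term remains $O(kL)$ uniformly after summation, which holds because the multiplicative factor $1+O(kL)$ is identical across all $\mathfrak J$.
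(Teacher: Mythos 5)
Your overall strategy is exactly the paper's: replace Proposition \ref{p-vol-1} by Part $(1)$ of Lemma \ref{l-mirz} (which needs no restriction on $n$), note that $e^{(x_1+\cdots+x_k)/2}=1+O(kL(g,n))$ once all $x_i\leq L(g,n)\to 0$, apply Theorem \ref{Mirz-for} with $C_{\Gamma_{\mathfrak{J}}}=1$ and $V_{0,3}\equiv 1$, and sum over the $\frac{n!}{2^k(n-2k)!}$ equivalence classes after observing that $L(g,n)<2\arcsinh 1$ for large $g+n$ so that \eqref{e-3} applies. All of these structural points are correct, and you rightly identified the unavailability of Proposition \ref{p-vol-1} as the one obstacle to copying Lemma \ref{l-exp}.

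However, your bookkeeping of the constant --- which is the actual content of the lemma --- contains a genuine error that you notice but never resolve. Theorem \ref{Mirz-for} with $V_{0,3}\equiv 1$ produces the weight $\prod_{i=1}^k x_i\,dx_i$, not $\prod_{i=1}^k 2\sinh\left(\frac{x_i}{2}\right)dx_i$: in the proof of Lemma \ref{l-exp}, the factor $2\sinh\left(\frac{x_i}{2}\right)$ arises as $x_i\cdot\frac{\sinh(x_i/2)}{x_i/2}$, where the second factor comes from Proposition \ref{p-vol-1} --- precisely the ingredient you had just, correctly, discarded. Having imported that weight anyway, you then evaluate $\int_0^L 2\sinh\left(\frac{x}{2}\right)dx$ as $\frac{L^2}{4}\left(1+O(L^2)\right)$, which is off by a factor of $2$: the integral equals $4\cosh\frac{L}{2}-4=\frac{L^2}{2}\left(1+O(L^2)\right)$. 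These two slips leave you with the total $\prod_{i=0}^{2k-1}(n-i)\cdot\frac{V_{g,n-k}}{V_{g,n}}\cdot\frac{L^{2k}}{8^k}$, a factor $2^{-k}$ short of the statement; your plan to ``reconcile the power of two exactly as in Lemma \ref{l-exp}'' is not an argument, since once the computation is finished there is nothing left to reconcile against. The repair is immediate and is exactly what the paper does: with the correct weight, $\int_0^L x\,dx=\frac{L^2}{2}$, each class $\mathfrak{J}$ contributes $\frac{V_{g,n-k}}{V_{g,n}}\cdot\frac{L^{2k}}{2^k}\left(1+O(kL)\right)$, and multiplying by the class count $\frac{n!}{2^k(n-2k)!}=\frac{1}{2^k}\prod_{i=0}^{2k-1}(n-i)$ yields $\prod_{i=0}^{2k-1}(n-i)\cdot\frac{V_{g,n-k}}{V_{g,n}}\cdot\frac{L^{2k}}{4^k}\left(1+O(kL)\right)$, as claimed.
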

 \begin{proof}
From Part $(1)$ of Lemma \ref{l-mirz}, for any $k\geq 1$ and $x_i\geq 0\ (1\leq i\leq k)$,
\be 
V_{g,n-k}\leq V_{g,n-k}(x_1,...,x_k,0,...,0)\leq e^{\frac{x_1+...+x_k}{2}}\cdot V_{g,n-k}.\nonumber
\ene
 It follows that if $x_i\leq L(g,n)$ for all $1\leq i\leq k$, then
 \begin{align}\label{e-3-11}
 V_{g,n-k}(x_1,...,x_k,0,...,0)=V_{g,n-k}\cdot(1+O(kL(g,n))),
 \end{align}
where the term $O(kL(g,n))$ is positive and the implied constant is independent of $g$ and $n$. Define a function $F_k:\mathbb{R}_{\geq 0}^k\to\mathbb{R}_{\geq 0}$ as 
$$F_k(x_1,...,x_k)=\prod\limits_{i=1}^k 1_{[0,L]}(x_i).$$
Assume $\mathfrak{P}=(i_1,j_1,...,i_k,j_k)$ and $\Gamma_{\mathfrak{P}}$ is the corresponding simple closed multi curve. Similar as the proof of Lemma \ref{l-exp} we apply Mirzakhani's Integration Formula, i.e., Theorem \ref{Mirz-for}, to  $F_k$ and $\Gamma_{\mathfrak{P}}$. By using \eqref{e-3-11} one may conclude that
 \begin{align*}
&\ \ \ \ \mathbb{E}_{\textnormal{WP}}^{g,n}\left[N_{\Gamma_{\mathfrak{P}}}^\infty(X,L(g,n))\right]\\
&=\frac{1}{V_{g,n}}\underbrace{\int_0^{L(g,n)}...\int_0^{L(g,n)}}_{k} V_{g,n-k}(x_1,...,x_k,0,..,0)\cdot \left(\prod_{i=1}^k V_{0,3}(0,0,x_i)x_i\right)dx_1...dx_k\\
&=\frac{1}{V_{g,n}}\underbrace{\int_0^{L(g,n)}...\int_0^{L(g,n)}}_{k} V_{g,n-k} \cdot \left(1+O\left(kL(g,n)\right)\right)\cdot\prod_{i=1}^k x_idx_1...dx_k\\
&=\frac{V_{g,n-k}}{V_{g,n}}\cdot\left(\int_0^{L(g,n)} xdx\right)^k \cdot \left(1+O(kL(g,n))\right)\\
&=\frac{V_{g,n-k}}{V_{g,n}}\cdot\frac{L(g,n)^{2k}}{2^k}\cdot (1+O(kL(g,n))).
\end{align*}
Recall that Remark \ref{r-2} says that there are $\frac{n!}{2^k\cdot(n-2k)!}$ different equivalent classes of pairs of form $\mathfrak{P}=(i_1,j_1,...,i_k,j_k)$. Then together with \eqref{e-3}, we obtain
\begin{align*}
&\mathbb{E}_{\textnormal{WP}}^{g,n}\left[N_{(0,3),k}^{(g,n-k)}(X,L(g,n))\right]=\sum\limits_{\mathfrak{P}}\mathbb{E}_{\textnormal{WP}}^{g,n}\left[N_{\Gamma_\mathfrak{P}}^{\infty}(X,L(g,n))\right]\\
&=\prod\limits_{i=0}^{2k-1} \left(n-i\right)\cdot\frac{V_{g,n-k}}{V_{g,n}}\cdot \frac{L(g,n)^{2k}}{4^k} \cdot \left(1+O\left(kL(g,n)\right)\right)
\end{align*}
as desired.
 \end{proof}
 \section{Proofs of Theorem \ref{mt-1}, \ref{mt-3} and \ref{mt-4}}\label{sec-proof}
 In this section, we study the behavior of the Cheeger constants of random surfaces in $\sM_{g,n(g)}$, and complete the proofs of Theorem \ref{mt-1}, \ref{mt-3} and \ref{mt-4}.

\subsection{The case that $n(g)$ grows slower than $\sqrt g$}\label{mt-p-1}
In this subsection, we closely follow the approach in \cite{Mirz13} of Mirzakhani to complete the proof of Theorem \ref{mt-4}.

The following proposition is essential to Theorem \ref{mt-4}.
\begin{proposition}\label{lh-1}
Assume $n(g)$ satisfies that $\lim\limits_{g\to\infty}\frac{n(g)}{\sqrt g}=0.$
Then for any constant $0<C<\frac{\log 2}{2\pi}$,
$$\lim\limits_{g\to\infty}\Prob\left(X\in\mathcal{M}_{g,n(g)};\ H(X)> C\right)=1$$
where $H(X)$ is the geodesic Cheeger constant of $X$.
\end{proposition}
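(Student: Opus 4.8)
\emph{Proof plan.} The strategy is to bound the complementary event and show $\Prob\left(X\in\mathcal M_{g,n(g)}:\ H(X)\le C\right)\to 0$ by a first-moment estimate. By the definition of the geodesic Cheeger constant, $H(X)\le C$ means there is a separating simple closed geodesic multi-curve $\alpha\in\mathcal{SG}(X)$ with $X\setminus\alpha=X_1\sqcup X_2$ and $\ell(\alpha)\le C\cdot\min\{\mathrm{Area}(X_1),\mathrm{Area}(X_2)\}$. Since area is $2\pi$ times the Euler complexity (Gauss--Bonnet), writing $m$ for the complexity of the smaller side, the two areas are $2\pi m$ and $2\pi(\chi-m)$ with $\chi=2g-2+n(g)$, so the condition becomes $\ell(\alpha)\le 2\pi C m$. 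Consequently
\[
\Prob\big(H(X)\le C\big)\ \le\ \E\Big[\#\{\alpha\in\mathcal{SG}(X):\ \ell(\alpha)\le 2\pi C\, m_\alpha\}\Big]\ =\ \sum_{m=1}^{\lfloor\chi/2\rfloor}\ \sum_{\text{types}}\E\big[\,\cdots\,\big],
\]
where, by Remark~\ref{r-1}, the topological types whose smaller side has complexity $m$ are indexed by $\mathcal{I}_m$ together with the choice of how $\alpha$ partitions the punctures. I would estimate each expected count by Mirzakhani's Integration Formula (Theorem~\ref{Mirz-for}) and then treat $m=1$ and $m\ge 2$ separately.

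The term $m=1$ is exactly where the hypothesis $\tfrac{n(g)}{\sqrt g}\to 0$ is used. The relevant $\alpha$ are geodesics (or multi-curves) cutting off an $S_{0,3}$ or an $S_{1,1}$, and the dominant contribution comes from the curves $\Gamma_{ij}$ bounding two punctures, of which there are $\binom{n}{2}$ types. Here $2\pi C<\log 2<2\arcsinh 1$, so Lemma~\ref{l-exp} (with $k=1$, $L=2\pi C$) applies and gives
\[
\E\big[N_{(0,3)}^{(g,n-1)}(X,2\pi C)\big]=n(n-1)\,\frac{V_{g,n-1}}{V_{g,n}}\,\big(2\cosh(\pi C)-2\big)\Big(1+O\big(\tfrac{n}{g}\big)\Big).
\]
By Lemma~\ref{l-MZ} one has $\tfrac{V_{g,n-1}}{V_{g,n}}\asymp\tfrac1g$, so this is $\asymp\tfrac{n^2}{g}\big(\cosh(\pi C)-1\big)\to 0$ precisely because $n(g)=o(\sqrt g)$. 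The remaining $m=1$ types ($S_{1,1}$-type, or $S_{0,3}$ bounding fewer punctures) involve volume ratios of complexity $\chi-1$ and are $\prec n/g$ or $\prec 1/g$, hence also vanish.

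For $m\ge 2$ I would feed everything into Proposition~\ref{p-vol-2} with $u=C$. Applying Theorem~\ref{Mirz-for} to a type with $k$ boundary curves and bounding both volume polynomials by the unconditional upper estimate $V_{g_i,n_i}(\mathbf x,\mathbf 0)\le V_{g_i,n_i}\prod_j\frac{\sinh(x_j/2)}{x_j/2}$ (which follows at once from $\big[\prod_i\tau_{d_i}\big]_{g,n}\le V_{g,n}$, see \eqref{e-ap-2}, and so needs no restriction on $n_i$), the expected count of one type is at most
\[
\frac{V_{g_1,n_1}V_{g_2,n_2}}{V_{g,n}}\int_{\substack{x_j\ge 0\\ \sum_j x_j\le 2\pi C m}}\ \prod_{j=1}^{k}\frac{2(\cosh x_j-1)}{x_j}\,d\mathbf x .
\]
Summing over the $\le C(m,n(g))$ ways to distribute punctures (Remark~\ref{r-1}) and over $\mathcal{I}_m$ (of size $\le 2(m+3)^2$ by \eqref{pv2-e00}), the whole $m\ge 2$ contribution is comparable to the left-hand side of Proposition~\ref{p-vol-2}, the simplex integral playing the role of the factor $e^{L_m}$; Proposition~\ref{p-vol-2} then bounds it by $\prec\tfrac1{\sqrt g}\to 0$. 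Combined with the $m=1$ estimate this yields $\Prob(H(X)\le C)\to 0$, i.e.\ $\Prob(H(X)>C)\to 1$.

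The step I expect to be the main obstacle is the $m\ge 2$ case, and specifically making the simplex integral above uniform in the number of components $k$. The crude factor $\cosh x-1\le\tfrac12 e^{x}$ (equivalently $V_{g_i,n_i}(\mathbf x,\mathbf 0)\le e^{\frac12\sum_j x_j}V_{g_i,n_i}$) is far too lossy once $k$ becomes comparable to $m$, because then the total-length window $\sum_j x_j\le 2\pi C m$ is no longer binding; a genuine exponential-moment (Laplace-transform) estimate is needed, based on $\int_0^\infty e^{-tx}\tfrac{2(\cosh x-1)}{x}\,dx=-\log(1-t^{-2})$ for $t>1$, and this is precisely what dictates the correction term $3(2\pi mC)^{2/3}$ in $L_m$. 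It is also the point at which the bound $C<\tfrac{\log 2}{2\pi}$ is essential: it forces $e^{L_m/m}\to e^{2\pi C}<2$, so that the resulting geometric-type series over $m$ and over $\mathcal I_m$ in Proposition~\ref{p-vol-2} converges and decays like $1/\sqrt g$.
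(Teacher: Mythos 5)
Your overall architecture coincides with the paper's: a first-moment (union) bound on the complementary event, with topological types of separating multi-curves indexed exactly as in Remark \ref{r-1} by $m$, $\mathcal{I}_m$ and the puncture-distribution factor $C(m,n(g))$; an $m=1$ term of size $\asymp n(g)^2/g$, which is precisely where the hypothesis $n(g)=o(\sqrt g)$ enters (the paper bounds this term crudely via $C(1,n(g))\asymp n(g)^2$ and $V_{g_2,n_2}/V_{g,n(g)}\prec 1/g$, rather than through Lemma \ref{l-exp}, but the two are equivalent here); and Proposition \ref{p-vol-2} for $m\ge 2$. However, there is one genuine gap, located exactly in the step you flag as the main obstacle: the uniformity in the number of components $k$.

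The missing ingredient is the constant $C_{\Gamma(\mathfrak{I})}$ in Theorem \ref{Mirz-for}, which for these multi-curves equals $\frac{1}{k!}$ (the $k$ components of $\Gamma(\mathfrak{I})$ are interchangeable under mapping classes), and which the paper's proof uses crucially. Without it, your claim that the simplex integral ``plays the role of the factor $e^{L_m}$'' fails for $C$ near $\frac{\log 2}{2\pi}$: since $\frac{2(\cosh x-1)}{x}\le x e^{x}$, your integrals are bounded by $e^{2\pi Cm}\cdot\frac{(2\pi Cm)^{2k}}{(2k)!}$, and as $k$ ranges up to $m+2$ the dominant term sits at $2k\approx 2\pi Cm$, so the sum over $k$ is genuinely of size $e^{2\pi Cm}$ and the total is $\asymp e^{4\pi Cm}$; feeding that into the case analysis of Proposition \ref{p-vol-2} requires $e^{4\pi C}<2$, i.e.\ $C<\frac{\log 2}{4\pi}$ --- half the claimed range. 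Your proposed Laplace-transform repair does not close this either: the bound $e^{tL}\left(-\log(1-t^{-2})\right)^{k}$ summed geometrically over $k$ forces $-\log(1-t^{-2})<1$, hence $t\ge (1-e^{-1})^{-1/2}>1$, and again a strictly smaller admissible range of $C$. Once $\frac{1}{k!}$ is reinstated the resolution is elementary and is what the paper does: bound both volume polynomials by $e^{\frac{1}{2}\sum_i x_i}V_{g_i,n_i}$ (so your worry that this crude bound is ``far too lossy'' is misplaced), compute $\int_{\sum_i x_i\le L}\prod_i x_i\,d\mathbf{x}=\frac{L^{2k}}{(2k)!}$, and invoke the inequality \eqref{lh1-e5}, namely $\sum_{k\ge 1}\frac{L^{2k}}{k!(2k)!}<e^{3L^{2/3}}$, which --- and not a Laplace estimate --- is the actual source of the correction term $3(2\pi mC)^{2/3}$ in $L_m$. (With the $\frac{1}{k!}$ restored, your Laplace variant also works, in fact giving the sharper bound $\prec L e^{L}$.) So the fix is local: insert $C_{\Gamma(\mathfrak{I})}=\frac{1}{k!}$ and the $m\ge 2$ case closes exactly as in the paper; the rest of your outline matches the paper's proof.
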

\begin{proof}
Write $n(g)=\omega(g)\cdot\sqrt g$, where $\omega(g):\mathbb{N}\to\mathbb{R}_{\geq 0}$ is a function satisfying
\[
\lim\limits_{g\to\infty}\omega(g)=0.
\]
Assume $X\in\mathcal{M}_{g,n(g)}$ is a hyperbolic surface. Recall that from Section \ref{multi-curve}, for any pair $\mathfrak{I}=(J_1,J_2,g_1,g_2,k)$ satisfying
\begin{enumerate}[(a)]
\item $J_1\sqcup J_2=\{1,2,...,n(g)\}$;
\item $g_1+g_2+k-1=g$;
\item $1\leq2g_1-2+\left|J_1\right|+k\leq2g_2-2+\left|J_2\right|+k$,
\end{enumerate}
then it corresponds to a simple closed multi-curve $\Gamma(\mathfrak{I})=\sum_{i=1}^k\gamma_i$ such that $$X\setminus\Gamma(\mathfrak{I})\cong S_{g_1,|J_1|+k}\cup S_{g_2,|J_2|+k}.$$
On the other hand, for any $\alpha=\bigcup_{i=1}^k \alpha_i\in\mathcal{SG}(X)$ defined in Subsection \ref{gch}, there exists a pair $\mathfrak{I}$ such that $\sum_{i=1}^k\alpha_i\in\textnormal{Mod}_{g,n(g)}\cdot\Gamma(\mathfrak{I})$ .
For any $L>0$, the counting function is defined by
$$N_{\Gamma(\mathfrak{I})}(X,L)=\#\{\alpha\in\textnormal{Mod}_{g,n(g)}\cdot\Gamma(\mathfrak{I});\ \ell_\alpha(X)\leq L\}.$$
Denote by $$\chi(\mathfrak{I})=\min\limits_{i=1,2}\{2g_i-2+|J_i|+k\}.$$  Then $H(X)\leq C$ is equivalent to that there exists a pair $\mathfrak{I}=(J_1,J_2,g_1,g_2,k)$ such that $N_{\Gamma(\mathfrak{I})}(X,2\pi\chi(\mathfrak{I})C)\geq 1$, which in particular implies that
\begin{align}\label{lh1-e2}
&\ \ \ \
\Prob\left(X\in\mathcal{M}_{g,n(g)};\ H(X)\leq C\right)\\
&\leq \sum\limits_{\mathfrak{I}}\Prob\left(X\in\mathcal{M}_{g,n(g)};\ N_{\Gamma(\mathfrak{I})}(X,2\pi\chi(\mathfrak{I})C)\geq 1\right)\nonumber\\
&\leq\sum\limits_{\mathfrak{I}}\mathbb{E}_{\textnormal{WP}}^g\left[N_{\Gamma(\mathfrak{I})}(X,2\pi\chi(\mathfrak{I})C)\right],\nonumber
\end{align}
where $\mathfrak{I}$ runs over all possible pairs $(J_1,J_2,g_1,g_2,k)$. For any $k\geq 1$ and $L>0$, function $\phi_{k,L}:\mathbb{R}^k_{\geq 0}\to\mathbb{R}_{\geq 0}$ is defined by
$$\phi_{k,L}(x_1,...,x_k)=1_{[0,L]}(x_1+...+x_k).$$
Now we apply Mirzakhani's Integration Formula, i.e., Theorem \ref{Mirz-for}, to $\phi_{k,2\pi\chi(\mathfrak{I})C}$ and $\Gamma(\mathfrak{I})$. Assume $\Gamma(\mathfrak{I})=\sum_{i=1}^k\gamma_i$, where $\gamma_i(1\leq i\leq k)$ is a simple closed non-separating multi-curve in $S_{g,n(g)}$ and $G_k$ is the symmetry group of $\{1,2,...,k\}$. Recall that $M(\Gamma)=|\{i;\ \gamma_i\text{ separates off a one-handle from }S_{g,n}\}|$. So we have
\begin{align}\label{e-no1}
M(\Gamma(\mathfrak{I}))=0.
\end{align}
For any $f\in\textnormal{Stab}(\Gamma(\mathfrak{I}))$, since it fixes all punctures, it follows that $f$ fixes the orientation of $\gamma_i$ for all $1\leq i\leq k.$
 Define a homomorphism $\Psi:\textnormal{Stab}(\Gamma(\mathfrak{I}))\to G_k$ in the following way: for any $f\in\textnormal{Stab}(\Gamma(\mathfrak{I}))$ and $1\leq i\leq k$,
$$\Psi(f)(i)=j,\text{ where }1\leq j\leq k\text{ such that }f(\gamma_i)=\gamma_j.$$
It is not hard to check that $\Psi$ is surjective and $\textnormal{ker}(\Psi)=\cap_{i=1}^k\textnormal{Stab}^+(\gamma_i)$. Hence $$\textnormal{Stab}(\Gamma(\mathfrak{I}))/\cap_{i=1}^k\textnormal{Stab}^+(\gamma_i)\cong G_k,$$
which together with \eqref{e-coff} and \eqref{e-no1} implies that the coffecient $C_{\Gamma(\mathfrak{I})}$ in Theorem \ref{Mirz-for} satisfies that
\begin{align*}
C_{\Gamma(\mathfrak{I})}&=\frac{2^{-M(\Gamma(\mathfrak{I}))}}{|\textnormal{Stab}(\Gamma(\mathfrak{I})):\cap_{i=1}^k\textnormal{Stab}^+(\gamma_i)|}=\frac{1}{k!}.
\end{align*}
Thus it follows from Part $(1)$ of Lemma \ref{l-mirz} and  Theorem \ref{Mirz-for} that
\begin{align}\label{lh1-e4}
&\ \ \ \ \mathbb{E}_{\textnormal{WP}}^g\left[N_{\Gamma(\mathfrak{I})}(X,2\pi\chi(\mathfrak{I})C)\right]\\
&=\frac{1}{k!}\cdot \frac{1}{V_{g,n(g)}} \ \int\limits_{\sum_{i=1}^k x_i\leq 2\pi\chi(\mathfrak{I})C}V_{g_1,|J_1|+k}(x_1,...,x_k,0,...,0)\nonumber\\
&\times V_{g_2,|J_2|+k}(x_1,...,x_k,0,...,0)\times\prod_{i=1}^k x_idx_1...dx_k \nonumber\\
&\leq\frac{V_{g_1,|J_1|+k} \cdot V_{g_2,|J_2|+k}}{k!V_{g,n(g)}}\int\limits_{\sum_{i=1}^k x_i\leq 2\pi\chi(\mathfrak{I})C}e^{x_1+...+x_k}\prod_{i=1}^k x_idx_1...dx_k \nonumber\\
&\leq\frac{V_{g_1,|J_1|+k}\cdot V_{g_2,|J_2|+k}}{V_{g,n(g)}}\cdot\frac{(2\pi\chi(\mathfrak{I})C)^{2k}}{k!(2k)!}\cdot e^{2\pi\chi(\mathfrak{I})C},\nonumber
\end{align}
where in the last inequality we apply
$$\int\limits_{\sum_{i=1}^k x_i\leq 2\pi\chi(\mathfrak{I})C}\prod\limits_{i=1}^k x_idx_1...dx_k=\frac{(2\pi\chi(\mathfrak{I})C)^{2k}}{(2k)!}.$$
For $m\geq 1$, set $$L_m=2\pi mC+3(2\pi mC)^{\frac{2}{3}}.$$ Recall that from Part (2) of Remark \ref{r-1}, for any $\mathfrak{I}=(J_1,J_2,g_1,g_2,k)$ satisfying the conditions $(a)$, $(b)$ and $(c)$ above, it corresponds to a pair $(n_1,g_1,n_2,g_2)\in\mathcal{I}_m$ for some $1\leq m\leq\lfloor \frac{\chi}{2}\rfloor$. Conversely for any $(n_1,g_1,n_2,g_2)\in\mathcal{I}_m$, there are at most $\binom{n(g)}{n_1-k}$ different $\mathfrak{I}'s$ corresponding to it, where $1\leq k=\frac{n_1+n_2-n}{2}\leq n_1$. Then it follows that
\begin{align}\label{lh1-e8}
&\ \ \ \ \sum\limits_{\mathfrak{I}}\frac{V_{g_1,|J_1|+k}\cdot V_{g_2,|J_2|+k}}{V_{g,n(g)}}\cdot\frac{(2\pi\chi(\mathfrak{I})C)^{2k}}{k!(2k)!}\cdot e^{2\pi\chi(\mathfrak{I})C}\\
&\leq\sum\limits_{m=1}^{\left\lfloor \frac{\chi}{2}\right\rfloor}\sum\limits_{(g_1,n_1,g_2,n_2)\in\mathcal{I}_m}
\sum_{k=1}^{n_1}\binom{n(g)}{n_1-k}\cdot\frac{V_{g_1,n_1}\cdot V_{g_2,n_2}}{V_{g,n(g)}}\cdot
\frac{(2\pi mC)^{2k}}{k!(2k)!}\cdot e^{2\pi mC}\nonumber\\
&\leq\sum\limits_{m=1}^{\left\lfloor\frac{\chi}{2}\right\rfloor}\sum\limits_{(g_1,n_1,g_2,n_2)\in\mathcal{I}_m}
\sum_{k=1}^{n_1}C(m,n(g))\cdot\frac{V_{g_1,n_1}\cdot V_{g_2,n_2}}{V_{g,n(g)}}\cdot
\frac{(2\pi mC)^{2k}}{k!(2k)!}\cdot e^{2\pi mC}\nonumber
\end{align}
where $\mathfrak{I}$ runs over all possible pairs $(J_1,J_2,g_1,g_2,k)$ and $$C(m,n(g))=\max\limits_{0\leq i\leq m+1}\binom{n(g)}{i}.$$
It is clear that
\begin{align}\label{lh1-e5}
e^{3(2\pi mC)^{\frac{2}{3}}}=\left(1+\sum\limits_{k=1}^\infty \frac{(2\pi mC)^{\frac{2k}{3}}}{k!} \right)^3>1+\sum\limits_{k=1}^\infty \frac{(2\pi mC)^{2k}}{(k!)^3}>\sum\limits_{k=1}^\infty \frac{(2\pi mC)^{2k}}{k!(2k)!}.
\end{align}
Then combining \eqref{lh1-e2}, \eqref{lh1-e4}, \eqref{lh1-e8} and \eqref{lh1-e5}, we have
\begin{align}\label{lh1-e3}
&\ \ \ \ \Prob\left(X\in\mathcal{M}_{g,n(g)};\ H(X)\leq C\right)\\
&\leq\sum\limits_{m=1}^{\left\lfloor \frac{\chi}{2}\right\rfloor}\sum\limits_{(g_1,n_1,g_2,n_2)\in\mathcal{I}_m}
\sum_{k=1}^{n_1}C(m,n(g))\cdot\frac{V_{g_1,n_1}\cdot V_{g_2,n_2}}{V_{g,n(g)}}\cdot
\frac{(2\pi mC)^{2k}}{k!(2k)!}\cdot e^{2\pi mC}\nonumber\\
&=\frac{1}{V_{g,n(g)}}\sum\limits_{m=1}^{\left\lfloor \frac{\chi}{2}\right\rfloor}\sum\limits_{(g_1,n_1,g_2,n_2)\in\mathcal{I}_m}
C(m,n(g))V_{g_1,n_1}\cdot V_{g_2,n_2}e^{2\pi m C}\cdot\left(\sum\limits_{k=1}^{n_1}\frac{(2\pi mC)^{2k}}{k!(2k)!}\right)\nonumber\\
&\prec \frac{1}{V_{g,n(g)}}\sum\limits_{m=1}^{\left\lfloor\frac{\chi}{2}\right\rfloor}\sum\limits_{(g_1,n_1,g_2,n_2)\in\mathcal{I}_m}
C(m,n(g))V_{g_1,n_1}\cdot V_{g_2,n_2}e^{L_m}.\nonumber
\end{align}
Divide the summation  above into two disjoint parts $A_1$ and $A_2$, where $A_1$ is the summation of the terms when $m=1$ and $A_2$ is the remaining summation. From Proposition \ref{p-vol-2}, we have
\begin{align}\label{lh1-e6}
A_2\prec \frac{1}{\sqrt g}.
\end{align}
Now we consider the summation $A_1$. 
For any pair $(g_1,n_1,g_2,n_2)\in\mathcal{I}_1$, we have
\begin{align}\label{e-def}
2g_1-2+n_1=1\text{ and }2g_2-2+n_2=2g-2+n(g)-1.
\end{align}
The first equation in \eqref{e-def} tells that both $g_1$ and $n_1$ are uniformly bounded, hence
$$V_{g_1,n_1}\prec 1.$$
The second equation in \eqref{e-def} tells that
$$n_2-2(g-g_2)=n(g)-1,$$
which together with Lemma \ref{l-mirz} implies that
$$\frac{V_{g_2,n_2}}{V_{g,n(g)}}\leq\frac{V_{g,n(g)-1}}{V_{g,n(g)}}\prec\frac{1}{2g-2+n(g)}\asymp \frac{1}{g}.$$
Since $C(1,n(g))\asymp n(g)^2$ and the cardinality of pairs in $\mathcal{I}_1$ is uniformly bounded, it follows that
\begin{align}\label{lh1-e7}
A_1&=\frac{1}{V_{g,n(g)}}\sum\limits_{(g_1,n_1,g_2,n_2)\in\mathcal{I}_1}
C(1,n(g))V_{g_1,n_1}\cdot V_{g_2,n_2}\cdot e^{L_1}\\
&\prec\frac{n(g)^2}{g}=\omega(g)^2.\nonumber
\end{align}
Combining \eqref{lh1-e3}, \eqref{lh1-e6} and \eqref{lh1-e7}, we have
$$\Prob\left(X\in\mathcal{M}_{g,n(g)};\ H(X)\leq C\right)\prec \left(\omega(g)^2+\frac{1}{\sqrt g}\right).$$
Then the proof is complete by letting $g\to \infty$.
\end{proof}

Now we are ready to prove Theorem \ref{mt-4}.

\begin{proof}[Proof of Theorem \ref{mt-4}]
From Proposition \ref{lh-mirz} and \ref{lh-1}, we have for any $\epsilon>0$,
$$\lim\limits_{g\to\infty}\Prob\left(X\in\mathcal{M}_{g,n(g)};\ h(X)\geq \frac{\log 2}{\log 2+2\pi}-\epsilon\right)=1.$$
Then the proof follows from Cheeger's inequality, i.e., Lemma \ref{l-ch}.
\end{proof}
\subsection{The case that $n(g)$ is uniformly comparable to $\sqrt g$}
In this subsection, we consider the case that $n(g)\asymp\sqrt g$ and complete the proof of Theorem \ref{mt-3}.

\begin{figure}[ht]
\centering
\includegraphics[width=1.00\textwidth]{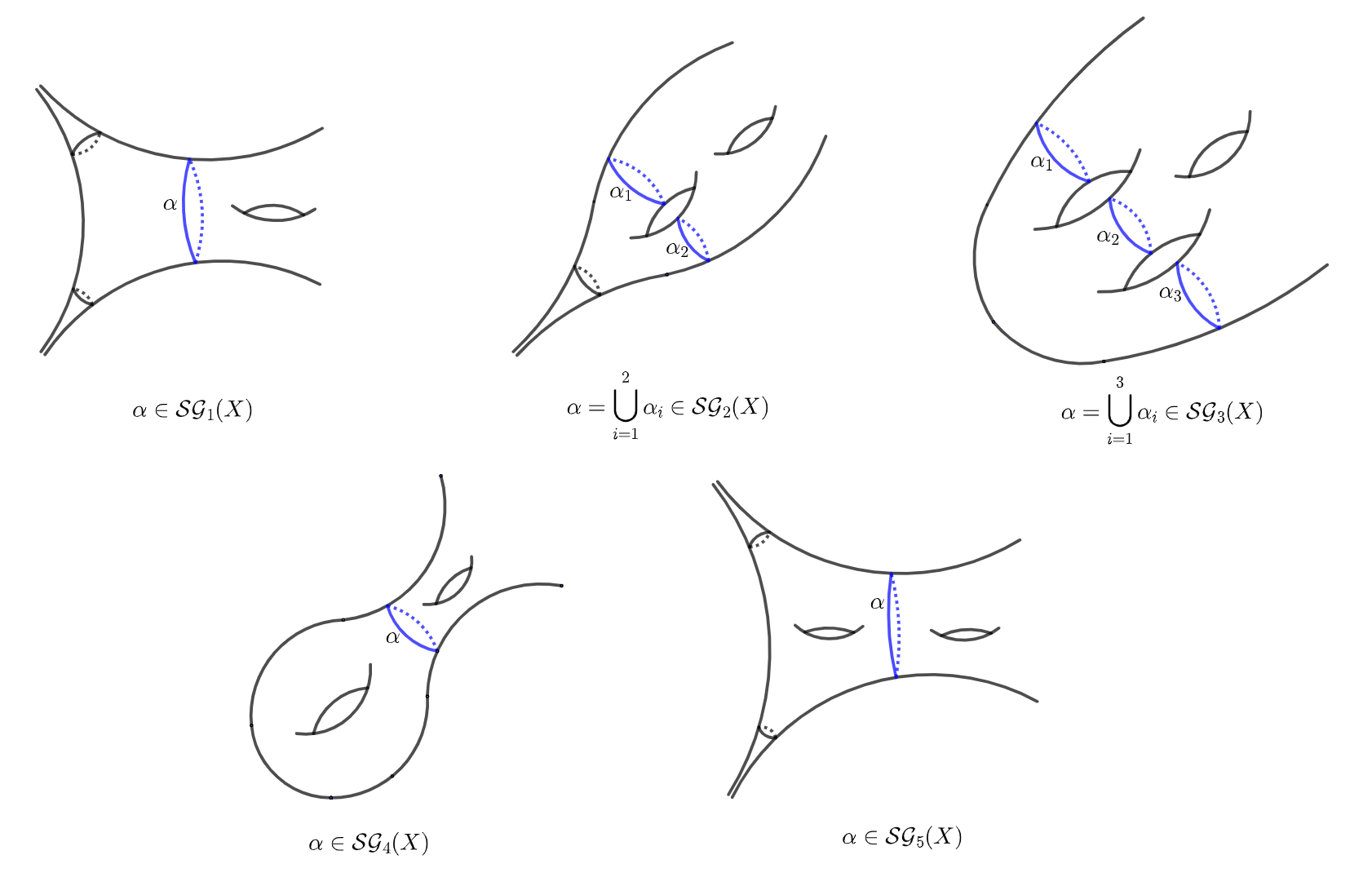}
\caption{A partition of $\mathcal{SG}(X)$}
\label{fig:01}
\end{figure}

For any hyperbolic surface $X\in\mathcal{M}_{g,n(g)}$, recall that $\mathcal{SG}(X)$ is a set of simple separating multi-geodesics, defined in Subsection \ref{gch}. Divide $\mathcal{SG}(X)$ into following five pairwisely disjoint parts (see Figure \ref{fig:01} for an illustration):
\begin{enumerate}
\item $\mathcal{SG}_{1}(X)=\left\{\alpha\in\mathcal{SG}(X);\ X\setminus\alpha\cong S_{0,3}\cup S_{g,n(g)-1}\right\}$.

\item $\mathcal{SG}_{2}(X)=\left\{\alpha=\bigcup_{i=1}^2\alpha_i\in\mathcal{SG}(X);\ X\setminus\alpha\cong S_{0,3}\cup S_{g-1,n(g)+1}\right\}$.
\item $\mathcal{SG}_{3}(X)=\left\{\alpha=\bigcup_{i=1}^3\alpha_i\in\mathcal{SG}(X);\ X\setminus\alpha\cong S_{0,3}\cup S_{g-2,n(g)+3}\right\}$.
\item $\mathcal{SG}_{4}(X)=\left\{\alpha\in\mathcal{SG}(X);\ X\setminus\alpha\cong S_{1,1}\cup S_{g-1,n(g)+1}\right\}$.
\item $\mathcal{SG}_5(X)=\mathcal{SG}(X)\setminus\bigcup_{i=1}^4\mathcal{SG}_i(X).$
\end{enumerate}

For each $1\leq i\leq 5$ and any $C>0$, the probability $P_{i,g}(C)$ is defined as
$$P_{i,g}(C)\overset{\textnormal{def}}{=}\Prob\left(X\in\mathcal{M}_{g,n(g)};\text{ there exists an $\alpha\in\mathcal{SG}_i(X)$ with }\mathcal{H}(\alpha)\leq C\right)$$ where $\mathcal{H}(\alpha)$ is defined in \eqref{def-h}. We also define the following two sets as
$$\mathcal{A}_{g,n(g)}(C)\overset{\textnormal{def}}{=}\left\{X\in\mathcal{M}_{g,n(g)};\ \begin{matrix}\text{there exists an }\alpha\in\bigcup_{j=2}^5\mathcal{SG}_j(X)\text{ with }  \mathcal{H}(\alpha)\leq C\end{matrix}\right\}$$
and
$$\mathcal{B}_{g,n(g)}(C)\overset{\textnormal{def}}{=}\left\{X\in\mathcal{M}_{g,n(g)};\ \begin{matrix}\text{there exists an }\alpha\in\mathcal{SG}_1(X) \text{ with }\mathcal{H}(\alpha)\leq C\end{matrix}\right\}.$$

Recall that the geometric Cheeger constant $H(X)$ of a hyperbolic surface $X$ is defined as 
$$H(X)=\inf\limits_{\alpha\in\mathcal{SG}(X)}\mathcal{H}(\alpha).$$
So we clearly have
\begin{align}\label{e-contain}
\mathcal{B}_{g,n(g)}(C)\subseteq \{X\in\mathcal{M}_{g,n(g)};\ H(X)\leq C\}\subseteq \mathcal{A}_{g,n(g)}(C)\cup\mathcal{B}_{g,n(g)}(C).
\end{align}
Moreover, we have the following theorem about $H(X)$.
\begin{theorem}\label{t-H}
Assume $n(g)$ satisfies that
$\lim\limits_{g\to\infty}\frac{n(g)}{\sqrt g}=a$
for some $a\in(0,\infty)$. Then for any $0<C<\frac{\log 2}{2\pi}$,
$$\lim\limits_{g\to\infty}\Prob\left(X\in\mathcal{M}_{g,n(g)};\ H(X)\leq C\right)=1-e^{-\lambda(a,C)},$$
where $\lambda(a,C)=\frac{a^2}{4\pi^2}\left(\cosh\pi C-1\right)$.
\end{theorem}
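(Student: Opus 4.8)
The plan is to deduce Theorem~\ref{t-H} directly from Lemmas~\ref{l-h1} and~\ref{l-h2} by a sandwiching argument, once the event $\{H(X)\le C\}$ is rewritten in terms of the sets $\mathcal{A}_{g,n(g)}(C)$ and $\mathcal{B}_{g,n(g)}(C)$. Since $H(X)=\min_{\alpha\in\mathcal{SG}(X)}\mathcal{H}(\alpha)$ and the five families $\mathcal{SG}_1(X),\dots,\mathcal{SG}_5(X)$ partition $\mathcal{SG}(X)$, the condition $H(X)\le C$ holds exactly when some $\alpha\in\mathcal{SG}(X)$ satisfies $\mathcal{H}(\alpha)\le C$, and any such $\alpha$ lies either in $\mathcal{SG}_1(X)$ or in $\bigcup_{j=2}^{5}\mathcal{SG}_j(X)$. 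Hence
$$
\{X\in\mathcal{M}_{g,n(g)};\ H(X)\le C\}=\mathcal{A}_{g,n(g)}(C)\cup\mathcal{B}_{g,n(g)}(C),
$$
and in particular $\mathcal{B}_{g,n(g)}(C)\subseteq\{H(X)\le C\}\subseteq\mathcal{A}_{g,n(g)}(C)\cup\mathcal{B}_{g,n(g)}(C)$.

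First I would turn these inclusions into a two-sided estimate for the probability. Applying monotonicity of $\Prob$ for the lower inclusion and subadditivity for the union on the right gives
$$
\Prob\!\left(\mathcal{B}_{g,n(g)}(C)\right)\le \Prob\!\left(H(X)\le C\right)\le \Prob\!\left(\mathcal{A}_{g,n(g)}(C)\right)+\Prob\!\left(\mathcal{B}_{g,n(g)}(C)\right).
$$
Next I would check that both lemmas apply throughout the required range $0<C<\frac{\log 2}{2\pi}$. This is where the hypothesis on $C$ matters: one verifies numerically that $\frac{\log 2}{2\pi}<\frac{\arcsinh 1}{\pi}$, so the condition $0<C<\frac{\arcsinh 1}{\pi}$ of Lemma~\ref{l-h2} is automatically satisfied, while Lemma~\ref{l-h1} is stated precisely for $0<C<\frac{\log 2}{2\pi}$.

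With both lemmas in force, Lemma~\ref{l-h1} yields $\lim_{g\to\infty}\Prob(\mathcal{A}_{g,n(g)}(C))=0$ and Lemma~\ref{l-h2} yields $\lim_{g\to\infty}\Prob(\mathcal{B}_{g,n(g)}(C))=1-e^{-\lambda(a,C)}$. Letting $g\to\infty$ in the displayed inequality, the lower and upper bounds both converge to $1-e^{-\lambda(a,C)}$, so the squeeze theorem forces $\lim_{g\to\infty}\Prob(H(X)\le C)=1-e^{-\lambda(a,C)}$, which is the claim.

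I do not expect a genuine obstacle at this stage, because the substantive work has already been carried out in the two lemmas: the negligible contributions of the geometrically small families $\mathcal{SG}_2,\dots,\mathcal{SG}_5$ are handled by Lemma~\ref{l-h1} (whose $j=5$ case reuses the volume estimate of Proposition~\ref{p-vol-2}), and the Poisson limit for the dominant family $\mathcal{SG}_1$ is exactly the content of Lemma~\ref{l-h2}. The only point that deserves explicit care is the comparison $\frac{\log 2}{2\pi}<\frac{\arcsinh 1}{\pi}$, which guarantees that we remain inside the domain of validity of \emph{both} lemmas; everything else is a routine union bound and squeeze.
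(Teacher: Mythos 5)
Your proposal is correct and follows essentially the same route as the paper: the paper's proof of Theorem~\ref{t-H} consists precisely of the two inclusions $\mathcal{B}_{g,n(g)}(C)\subseteq\{H(X)\leq C\}\subseteq\mathcal{A}_{g,n(g)}(C)\cup\mathcal{B}_{g,n(g)}(C)$ followed by Lemmas~\ref{l-h1} and~\ref{l-h2} and the observation $\log 2<2\arcsinh 1$, which is exactly your sandwich argument (your extra claim of set \emph{equality} is harmless since only the inclusions are used, and the hypothesis $n(g)\prec\sqrt g$ of Lemma~\ref{l-h1} holds automatically because $n(g)/\sqrt g\to a<\infty$).
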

For the proof of Theorem \ref{t-H}, we firstly prove the following two lemmas on $\mathcal{A}_{g,n(g)}(C)$ and $\mathcal{B}_{g,n(g)}(C)$.

\begin{lemma}\label{l-h1}
Assume $n(g)\prec\sqrt g$ and $0<C<\frac{\log 2}{2\pi}$. Then we have
$$\lim\limits_{g\to\infty}\textnormal{Prob}_{\textnormal{WP}}^g\left(X\in\mathcal{M}_{g,n(g)};\ X\in\mathcal{A}_{g,n(g)}(C)\right)=0.$$
\end{lemma}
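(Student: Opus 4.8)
The plan is to run the first-moment (union bound) argument of Proposition \ref{lh-1}, but now tracking the contribution of each topological type separately, because in the critical regime $n(g)\asymp\sqrt g$ the crude bound $A_1\prec\omega(g)^2$ used there no longer tends to zero. First I would translate the geometric event into a counting statement: for a separating multi-geodesic $\alpha$ arising from a pair $\mathfrak{I}=(J_1,J_2,g_1,g_2,k)$, cutting $X$ along $\alpha$ produces two pieces of areas $2\pi(2g_i-2+n_i)$, so $\mathcal{H}(\alpha)\le C$ is equivalent to $\ell(\alpha)\le 2\pi\chi(\mathfrak{I})C$ with $\chi(\mathfrak{I})=\min_i\{2g_i-2+|J_i|+k\}$. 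Hence by a union bound over the $\mathfrak{I}$ of types $2$--$5$ and Markov's inequality,
\[
\Prob\big(X\in\mathcal{A}_{g,n(g)}(C)\big)\le\sum_{\mathfrak{I}}\mathbb{E}_{\textnormal{WP}}^{g}\big[N_{\Gamma(\mathfrak{I})}(X,2\pi\chi(\mathfrak{I})C)\big],
\]
and it remains to show that each family of terms is $o(1)$.

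For the type-$5$ terms, which by the classification at the start of this subsection are exactly the pairs with $\chi(\mathfrak{I})=m\ge 2$ (the small side having $2g_1-2+n_1\ge 2$), I would apply the estimate \eqref{lh1-e4} followed by Proposition \ref{p-vol-2} with $u=C<\frac{\log 2}{2\pi}$. This is precisely the bound on the quantity $A_2$ in the proof of Proposition \ref{lh-1}, and it yields a contribution $\prec\frac{1}{\sqrt g}\to 0$.

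The genuinely new work is for types $2$, $3$, $4$, all of which have $\chi(\mathfrak{I})=m=1$ (the small side being $S_{0,3}$ or $S_{1,1}$), and which together with type $1$ exhaust the $m=1$ configurations. Here I would estimate the expected count directly. For an $m=1$ pair the length threshold is the \emph{constant} $2\pi C$, so the integral $\int_{\sum x_i\le 2\pi C}\prod x_i\,dx_i$ appearing in \eqref{lh1-e4} is bounded, and Lemma \ref{l-ratio} gives $\frac{V_{g_1,n_1}V_{g_2,n_2}}{V_{g,n(g)}}\prec\frac{1}{g}$. The decisive difference from type $1$ is the combinatorial multiplicity: by Remark \ref{r-1}(2) the number of distinct $\mathfrak{I}$ lying over a fixed $(g_1,n_1,g_2,n_2)$ is $\binom{n(g)}{n_1-k}$, which equals $\binom{n(g)}{1}\asymp\sqrt g$ for type $2$ and $\binom{n(g)}{0}=1$ for types $3$ and $4$, as opposed to $\binom{n(g)}{2}\asymp g$ for type $1$. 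Since $\mathcal{I}_1$ has bounded cardinality by \eqref{pv2-e00}, summing gives a total contribution $\prec\frac{n(g)}{g}\asymp\frac{1}{\sqrt g}$ for type $2$ and $\prec\frac{1}{g}$ for types $3$ and $4$, all of which are $o(1)$.

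Combining the two estimates gives $\Prob(X\in\mathcal{A}_{g,n(g)}(C))\prec\frac{1}{\sqrt g}\to 0$. The main obstacle is conceptual rather than computational: one must isolate type $1$ among the $m=1$ configurations, since it is exactly the Poisson-contributing family in Theorem \ref{mt-3} and does \emph{not} vanish, whereas the uniform $m=1$ bound $\omega(g)^2$ of Proposition \ref{lh-1} is useless once $a>0$. The point to verify carefully is that type $1$ is the unique topological type whose puncture-selection factor $\binom{n(g)}{n_1-k}$ attains the maximal order $n(g)^2\asymp g$, thereby exactly cancelling the volume ratio $\frac{1}{g}$; every other $m=1$ type selects strictly fewer punctures and is therefore suppressed by an extra factor of at least $n(g)^{-1}\asymp g^{-1/2}$.
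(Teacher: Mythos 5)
Your proposal is correct and follows essentially the same route as the paper: a union bound with Markov's inequality over the types $2$--$5$, the $m\geq 2$ (type-$5$) contribution controlled exactly by the $A_2$ estimate of Proposition \ref{p-vol-2} with $u=C<\frac{\log 2}{2\pi}$, and the $m=1$ types $2$--$4$ killed by a direct first-moment computation in which the volume ratio $\prec \frac{1}{g}$ is beaten by the puncture-selection multiplicities $\binom{n(g)}{1}$, $\binom{n(g)}{0}$, $\binom{n(g)}{0}$, precisely mirroring the paper's bounds $P_{2,g}(C)\prec \frac{n(g)}{g}$ and $P_{3,g}(C),P_{4,g}(C)\prec\frac{1}{\sqrt g}$. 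The only cosmetic difference is that you invoke Lemma \ref{l-ratio} for the ratio $\frac{V_{g_1,n_1}V_{g_2,n_2}}{V_{g,n(g)}}$ where the paper uses Part $(2)$ of Lemma \ref{l-mirz} via $V_{g-1,n(g)+1}\leq V_{g,n(g)-1}$, and your closing observation that type $1$ is the unique $m=1$ configuration with multiplicity $\asymp n(g)^2$ is exactly the reason the paper separates $\mathcal{SG}_1$ from $\mathcal{SG}_2,\dots,\mathcal{SG}_5$.
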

\begin{proof}
For $X\in\mathcal{M}_{g,n(g)}$, denote by $\{p_1,...,p_n\}$ the set of all punctures of $X$. From the definition of $\mathcal{A}_{g,n(g)}$, $X\in\mathcal{A}_{g,n(g)}$ is equivalent to that there exists an $\alpha\in\bigcup_{j=2}^5\mathcal{SG}_j(X)$ such that $\mathcal{H}(\alpha)\leq C$. It follows that
$$\textnormal{Prob}_{\textnormal{WP}}^g\left(X\in\mathcal{M}_{g,n(g)};\ X\in\mathcal{A}_{g,n(g)}(C)\right)\leq\sum\limits_{j=2}^5P_{j,g}(C).$$
It suffices to prove that for each $2\leq j\leq 5$,
$$P_{j,g}(C)\prec \frac{1}{\sqrt g}.$$
We first consider the case $j=2$. For all $1\leq i\leq n(g)$, assume $\Gamma_i=\gamma_{i1}+\gamma_{i2}$ is a simple closed multi-curve such that $\gamma_{i1}$ and $\gamma_{i2}$ bound the puncture $\{p_i\}$, i.e.,
$$X\setminus\Gamma_i\cong S_{0,3}\cup S_{g-1,n(g)+1},$$
where the puncture $\{p_i\}$ is contained in the part $S_{0,3}$. Then there exists an $\alpha\in\mathcal{SG}_i(X)$ with
$\mathcal{H}(\alpha)\leq C$ is equivalent to that there exists some $1\leq i\leq n(g)$ such that $N_{\Gamma_i}(X,2\pi C)\geq 1$. It follows that
\begin{align}\label{l-h1-e1}
P_{2,g}(C)&\leq\sum\limits_{i=1}^{n(g)}\Prob\left(X\in\mathcal{M}_{g,n(g)};\ N_{\Gamma_i}(X,2\pi C)\geq 1\right)\\
&\leq\sum\limits_{i=1}^{n(g)}\mathbb{E}_{\textnormal{WP}}^g\left[N_{\Gamma_i}\left(X,2\pi C\right)\right].\nonumber
\end{align}
Define a function $F:\mathbb{R}^2_{\geq 0}\to\mathbb{R}_{\geq 0}$ by
$$F(x,y)=1_{[0,2\pi C]}(x+y).$$
Now we apply Mirzakhani's Integration Formula, i.e., Theorem \ref{Mirz-for}, to $F$ and all simple closed multi-curves $\Gamma_i \ (1\leq i\leq n(g))$, together with Lemma \ref{l-mirz} we have
 \begin{align}\label{l-h2-e2}
 &\ \ \ \ \mathbb{E}_{\textnormal{WP}}^g\left[N_{\Gamma_i}\left(X,2\pi C\right)\right]\\
 &\leq
 \frac{1}{V_{g,n(g)}}\iint\limits_{\substack{x,y\geq 0\\x+y\leq 2\pi C}}V_{0,3}(x,y,0)V_{g-1,n(g)+1}(x,y,0,...,0)xydxdy\nonumber\\
 &\prec\frac{V_{g-1,n(g)+1}}{V_{g,n(g)}}\leq \frac{V_{g,n(g)-1}}{V_{g,n(g)}}\asymp \frac{1}{g+n(g)}.\nonumber
 \end{align}
 Since $n(g)\prec\sqrt g$, it follows from \eqref{l-h1-e1} and  \eqref{l-h2-e2} that
\begin{align*}
P_{2,g}(C)\prec \frac{n(g)}{g}\prec \frac{1}{\sqrt g}.
\end{align*}

Assume $\Gamma_{0,3}=\alpha_1+\alpha_2+\alpha_3$ is a simple closed multi-curve such that $$X\setminus\Gamma_{0,3}\cong S_{0,3}\cup S_{g-2,n(g)+3}$$
and
$\Gamma_{1,1}$ is a simple closed curve such that
$$X\setminus\Gamma_{1,1}\cong S_{1,1}\cup S_{g-1,n(g)+1}.$$
Similar as above, one may apply Mirzakhani's Integration Formula, i.e., Theorem \ref{Mirz-for}, to bound the expected values of $N_{\Gamma_{0,3}}(X,2\pi C)$ and $N_{\Gamma_{1,1}}(X,2\pi C)$ to obtain 
\begin{align*}
P_{3,g}(C)&\leq\mathbb{E}_{\textnormal{WP}}^g\left[N_{\Gamma_{0,3}}\left(X,2\pi C\right)\right]\\
&\leq
 \frac{1}{V_{g,n(g)}}\iiint\limits_{\substack{x,y,z\geq 0\\x+y+z\leq 2\pi C}}V_{0,3}(x,y,z)V_{g-2,n(g)+3}(x,y,z,0,...,0)xyzdxdydz\\
 &\prec\frac{V_{g-2,n(g)+3}}{V_{g,n(g)}}\leq \frac{V_{g,n(g)-1}}{V_{g,n(g)}}\asymp \frac{1}{g+n(g)}
\end{align*} 
and
\begin{align*}
P_{4,g}(C)&\leq\mathbb{E}_{\textnormal{WP}}^g\left[N_{\Gamma_{1,1}}\left(X,2\pi C\right)\right]\\
&\leq
 \frac{1}{V_{g,n(g)}}\int_0^{2\pi C}V_{1,1}(x)V_{g-1,n(g)+1}(x,0,...,0)xdx\\
 &\prec \frac{V_{g,n(g)-1}}{V_{g,n(g)}}\asymp \frac{1}{g+n(g)}.
\end{align*}

It remains to consider $P_{5,g}(C)$. The set $\mathcal{SG}_5(X)$ consists of all simple closed multi-geodesics $\Gamma$ in $X$ such that 
\begin{align}\label{e-geq2}
X\setminus\Gamma\cong X_1\cup X_2,
\end{align}
where $X_1,\ X_2$ are connected subsurfaces of $X$ with $|\chi(X_2)|\geq|\chi(X_1)|\geq 2$. For this case, based on our assumption that $0<C<\frac{\log 2}{2\pi}$, by following exactly the same proof of Equation \eqref{lh1-e6} one may deduce that 
\begin{align*}
P_{5,g}(C)\leq\sum\limits_{\Gamma}\mathbb{E}_{\textnormal{WP}}^g\left[N_{\Gamma}\left(X,2\pi C|\chi(X_1)|\right)\right]\leq A_2\prec \frac{1}{\sqrt g},
\end{align*}
where $\Gamma$ runs over all simple closed multi-geodesics satisfying \eqref{e-geq2}.
The proof is complete.
\end{proof}

\begin{lemma}\label{l-h2}
Assume $n(g)$ satisfies that $\lim\limits_{g\to\infty}\frac{n(g)}{\sqrt g}=a$ for some $a\in(0,\infty)$. Then for any $0<C<\frac{\arcsinh 1}{\pi}$,
$$\lim\limits_{g\to\infty}\textnormal{Prob}_{\textnormal{WP}}^g
\left(X\in\mathcal{M}_{g,n(g)};\ X\in\mathcal{B}_{g,n(g)}(C)\right)=1-e^{\lambda(a,C)},$$
where $\lambda(a,C)=\frac{a^2}{4\pi^2}(\cosh\pi C-1)$.
\end{lemma}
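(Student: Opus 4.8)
The plan is to identify the event $X\in\mathcal{B}_{g,n(g)}(C)$ with a concrete counting event and then determine its asymptotic probability by the method of moments. First I would observe that any $\alpha\in\mathcal{SG}_1(X)$ bounds a component homeomorphic to $S_{0,3}$ carrying exactly two of the original cusps; by Gauss-Bonnet this component has area $2\pi$ regardless of $\ell_\alpha(X)$, while its complement $S_{g,n(g)-1}$ has area $2\pi(2g-3+n(g))\to\infty$. Hence for $g$ large the minimum in the definition \eqref{def-h} of $\mathcal{H}(\alpha)$ is realized by the $S_{0,3}$ side, so that $\mathcal{H}(\alpha)=\ell_\alpha(X)/(2\pi)$ and the condition $\mathcal{H}(\alpha)\leq C$ reduces to $\ell_\alpha(X)\leq 2\pi C$. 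Consequently $X\in\mathcal{B}_{g,n(g)}(C)$ is equivalent to $N_{(0,3)}^{(g,n(g)-1)}(X,2\pi C)\geq 1$; abbreviate this count by $N$. It then suffices to find the limiting law of $N$.

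Next I would compute the factorial moments of $N$. Since $0<C<\frac{\arcsinh 1}{\pi}$ forces $2\pi C<2\arcsinh 1$, the Collar Lemma applies and the factorial-moment formula of Lemma \ref{l-exp} is available (note $n(g)\asymp\sqrt g$ certainly satisfies $n(g)\prec\sqrt g$). Taking $L=2\pi C$ there, for each fixed $k\geq1$ one obtains
\[
\mathbb{E}_{\textnormal{WP}}^{g}\!\left[\left(N_{(0,3)}^{(g,n(g)-1)}(X,2\pi C)\right)_k\right]
=\prod_{i=0}^{2k-1}(n(g)-i)\cdot\frac{V_{g,n(g)-k}}{V_{g,n(g)}}\cdot\bigl(2\cosh(\pi C)-2\bigr)^k\cdot\left(1+O\!\left(\tfrac{n(g)k^3}{g}\right)\right),
\]
where for fixed $k$ the error is $O(g^{-1/2})\to0$. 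I would then evaluate the surviving factors: clearly $\prod_{i=0}^{2k-1}(n(g)-i)\sim n(g)^{2k}$; iterating the Mirzakhani-Zograf estimate of Lemma \ref{l-MZ} gives
\[
\frac{V_{g,n(g)}}{V_{g,n(g)-k}}=\prod_{j=0}^{k-1}\frac{V_{g,n(g)-j}}{V_{g,n(g)-j-1}}\sim\prod_{j=0}^{k-1}4\pi^2\bigl(2g-3+n(g)-j\bigr)\sim(4\pi^2)^k(2g)^k
\]
since $2g-3+n(g)-j\sim 2g$ for fixed $j,k$; and $2\cosh(\pi C)-2=2(\cosh\pi C-1)$.

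Combining these, the $k$-th factorial moment equals $\bigl(\tfrac{n(g)^2}{g}\bigr)^k\cdot\tfrac{1}{(4\pi^2)^k}(\cosh\pi C-1)^k\,(1+o(1))$, and since $n(g)^2/g\to a^2$ I would conclude that for every fixed $k$,
\[
\lim_{g\to\infty}\mathbb{E}_{\textnormal{WP}}^{g}\!\left[\left(N_{(0,3)}^{(g,n(g)-1)}(X,2\pi C)\right)_k\right]
=\left(\frac{a^2}{4\pi^2}(\cosh\pi C-1)\right)^k=\lambda(a,C)^k.
\]
The Poisson convergence criterion Theorem \ref{t-pos} then shows that $N$ converges in distribution to a Poisson variable of mean $\lambda(a,C)$, whence $\textnormal{Prob}_{\textnormal{WP}}^{g}(N=0)\to e^{-\lambda(a,C)}$ and therefore $\textnormal{Prob}_{\textnormal{WP}}^{g}\!\left(X\in\mathcal{B}_{g,n(g)}(C)\right)=\textnormal{Prob}_{\textnormal{WP}}^{g}(N\geq1)\to 1-e^{-\lambda(a,C)}$.

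The hard part is not any isolated step but the bookkeeping in these moment asymptotics: one must balance the combinatorial factor $n(g)^{2k}$, which diverges, against the volume ratio $V_{g,n(g)-k}/V_{g,n(g)}$, which decays like $g^{-k}$, and verify that their product stabilizes to exactly $\bigl(\tfrac{a^2}{4\pi^2}(\cosh\pi C-1)\bigr)^k$ while the Lemma \ref{l-exp} error term stays negligible for each fixed $k$. This is precisely where the constant $4\pi^2$ from Mirzakhani-Zograf and the hypothesis $n(g)/\sqrt g\to a$ enter to pin down the exact value of $\lambda(a,C)$.
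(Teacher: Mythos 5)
Your proposal is correct and follows essentially the same route as the paper: reduce $X\in\mathcal{B}_{g,n(g)}(C)$ to the event $N_{(0,3)}^{(g,n(g)-1)}(X,2\pi C)\geq 1$, compute the $k$-th factorial moments via Lemma \ref{l-exp} together with the iterated Mirzakhani--Zograf ratio $\frac{V_{g,n(g)-k}}{V_{g,n(g)}}\sim\bigl(\frac{1}{4\pi^2}\bigr)^k\frac{1}{(2g)^k}$ from Lemma \ref{l-MZ}, and conclude Poisson convergence with mean $\lambda(a,C)$ by Theorem \ref{t-pos}; your explicit Gauss--Bonnet justification that $\mathcal{H}(\alpha)=\ell_\alpha(X)/(2\pi)$ for $\alpha\in\mathcal{SG}_1(X)$ is a detail the paper leaves implicit. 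Note also that your limit $1-e^{-\lambda(a,C)}$ is the correct one, matching the paper's proof; the exponent sign in the lemma statement as printed is a typo.
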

\begin{proof}
 From the definition of $\mathcal{B}_{g,n(g)}(C)$, we have
 $$P_{1,g}=\textnormal{Prob}_{\textnormal{WP}}^g
\left(X\in\mathcal{M}_{g,n(g)};\ X\in\mathcal{B}_{g,n(g)}(C)\right).$$
 It suffices to show that for any $0<C<\frac{\arcsinh 1}{\pi}$,
\begin{align*}
\lim\limits_{g\to\infty}P_{1,g}(C)=1-e^{-\lambda(a,C)}.
\end{align*}
Since $n(g)\asymp \sqrt{g}$, it follows from Lemma \ref{l-MZ} that for any fixed $k\in\mathbb{N}$, as $g\to\infty$
\begin{align}\label{t-H-e1}
\frac{V_{g,n(g)-k}}{V_{g,n(g)}}=\prod\limits_{i=1}^k\frac{V_{g,n(g)-i}}{V_{g,n(g)-i+1}}\sim\left(\frac{1}{4\pi^2}\right)^k\cdot\frac{1}{(2g)^k}.
\end{align}
 Since $2\pi C<2\arcsinh 1$, from \eqref{t-H-e1} and Lemma \ref{l-exp},
\begin{align*}
&\lim\limits_{g\to\infty}\mathbb{E}_{\textnormal{WP}}^g\left[\left(N_{(0,3)}^{(g,n(g)-1)}(X,2\pi C)\right)_k\right]
=\lim\limits_{g\to\infty}\mathbb{E}_{\textnormal{WP}}^g\left[N_{(0,3),k}^{(g,n(g)-k)}(X,2\pi C)\right]\\
&=\lim\limits_{g\to\infty}\prod\limits_{i=0}^{2k-1} (n(g)-i)\cdot\frac{V_{g,n(g)-k}}{V_{g,n(g)}}\cdot \left(2\cosh\frac{2\pi C}{2}-2\right)^k\\
&=\lim\limits_{g\to\infty}n(g)^{2k}\cdot\left(\frac{1}{4\pi^2}\right)^k\cdot\frac{1}{(2g)^k}\cdot\left(2\cosh\pi C-2\right)^k=\lambda(a,C)^k
\end{align*}
where $\lambda(a,C)=\frac{a^2}{4\pi^2}\left(\cosh\pi C-1\right)$. Then we apply Theorem \ref{t-pos} to the random variable $N_{(0,3)}^{(g,n(g)-1)}(X,2\pi C)$ to conclude that
\begin{align*}
&\lim\limits_{g\to\infty}P_{1,g}(C)=\lim\limits_{g\to\infty}\Prob\left(X\in\mathcal{M}_{g,n(g)};\ N_{(0,3)}^{(g,n(g)-1)}(X,2\pi C)\geq 1\right)\\
&=1-\lim\limits_{g\to\infty}\Prob\left(X\in\mathcal{M}_{g,n(g)};\ N_{(0,3)}^{(g,n(g)-1)}(X,2\pi C)=0\right)=1-e^{-\lambda(a,C)}.
\end{align*}
The proof is complete.
\end{proof}

Now we finish the proof of Theorem \ref{t-H} as follows.

\begin{proof}[Proof of Theorem \ref{t-H}]
From \eqref{e-contain}, we have
$$\textnormal{Prob}_{\textnormal{WP}}^g
\left(X\in\mathcal{M}_{g,n(g)};\ X\in\mathcal{B}_{g,n(g)}(C)\right)\leq\textnormal{Prob}_{\textnormal{WP}}^g
\left(X\in\mathcal{M}_{g,n(g)};\ H(X)\leq C\right)$$
and 
\begin{align*}
\textnormal{Prob}_{\textnormal{WP}}^g
\left(X\in\mathcal{M}_{g,n(g)};\ H(X)\leq C\right)&\leq\textnormal{Prob}_{\textnormal{WP}}^g
\left(X\in\mathcal{M}_{g,n(g)};\ X\in\mathcal{A}_{g,n(g)}(C)\right)\\
&+\textnormal{Prob}_{\textnormal{WP}}^g
\left(X\in\mathcal{M}_{g,n(g)};\ X\in\mathcal{B}_{g,n(g)}(C)\right).
\end{align*}
Since $\log 2<2\arcsinh 1$, the conclusion clearly follows from Lemma \ref{l-h1} and \ref{l-h2}.
\end{proof}
Before the proof of Theorem \ref{mt-3}, we prove the following lemma.
\begin{lemma}\label{l-describe}
Assume $0<C<\frac{\log 2}{\sqrt{4\pi(\log 2+\pi)}}$ and $\epsilon>0$ such that
\begin{align}\label{mt3-e-0}
\frac{\log 2-2\pi\epsilon}{2\pi(1-\epsilon)+\log 2}>\frac{C}{\sqrt{1+C^2}}.
\end{align}
Then we have
\begin{align*}
&\ \ \ \ \left\{X\in\mathcal{M}_{g,n(g)};\ h(X)\leq\frac{C}{\sqrt{1+C^2}}\right\}\\
&\subseteq\mathcal{A}_{g,n(g)}\left(\frac{\log 2}{2\pi}-\epsilon\right)\cup\left\{X\in\mathcal{M}_{g,n(g)};\ H(X)\leq C\right\}.\nonumber
\end{align*}
\end{lemma}
\begin{proof}
It suffices to prove that if $X\in\mathcal{M}_{g,n(g)}$ satisfies $$h(X)\leq\frac{C}{\sqrt{1+C^2}}\text{ and } X\notin\mathcal{A}_{g,n(g)}\left(\frac{\log 2}{2\pi}-\epsilon\right),$$ then $H(X)\leq C$. As in Subsection \ref{gch} we know that there exists an $\alpha\in\mathcal{S}(X)$ such that
\begin{align}\label{mt3-e-1}
\mathcal{H}(\alpha)\leq\frac{C}{\sqrt{1+C^2}}.
\end{align}
Moreover, the curve $\alpha$ must be one of following two types, i.e., $(1)$ and $(2)$ as in Theorem \ref{t-AM}:
\begin{enumerate}
\item horocycles around cusps;
\item geodesics or single ``neighboring curves" at constant distance from a geodesic.
\end{enumerate}

 \underline{Case I: $\alpha$ is of type-(1).} For this case, it is not hard to see that
   \begin{align*}
   \mathcal{H}(\alpha)\geq 1,
    \end{align*}
    which contradicts \eqref{mt3-e-1}.

  \underline{Case II: $\alpha$ is of type-(2).} Assume $\alpha=\bigcup_{i=1}^k\alpha_i$, then there exists an $\alpha'=\bigcup_{i=1}^k\alpha_i^\prime\in\mathcal{SG}(X)$ such that $\alpha_i$ is a neighboring curve at constant distance from geodesic $\alpha_i^\prime$ for all $1\leq i\leq k$. If $\alpha'\in\bigcup_{i=2}^5\mathcal{SG}_i(X)$, then from our assumption that $X\notin\mathcal{A}_{g,n(g)}\left(\frac{\log 2}{2\pi}-\epsilon\right)$, we have
  $$\mathcal{H}(\alpha')>\frac{\log 2}{2\pi}-\epsilon.$$
Together with \eqref{dh-3} and \eqref{mt3-e-0}, it follows that
\begin{align*}
\mathcal{H}(\alpha)&\geq\frac{\mathcal{H}(\alpha')}{1+\mathcal{H}(\alpha')}\geq\frac{\log 2-2\pi\epsilon}{2\pi(1-\epsilon)+\log 2}>\frac{C}{\sqrt{1+C^2}},
\end{align*}
which contradicts \eqref{mt3-e-1}.
So $$\alpha^\prime\in\mathcal{SG}_1(X).$$ In particular, we have that $\alpha^\prime$ is a simple closed geodesic such that $$X\setminus\alpha'\cong X\setminus\alpha\cong S_{0,3}\cup S_{g,n(g)-1}.$$ The classical Collar Lemma (see \eg \cite[Theorem 4.1.1]{Buser-book}) tells that $\alpha'$ admits a standard collar $\mathbb{S}^1\times [-w,w]$ endowed with the hyperbolic metric $ds^2=d\rho^2+\ell^2(\alpha')\cosh^2 \rho d\theta^2$ where $\rho$ is the hyperbolic distance to the central closed geodesic $\alpha'$ and $w=\arcsinh\left(\frac{1}{\sinh \left(\frac{\ell(\alpha')}{2} \right)} \right)$. Assume $X\setminus\alpha=X_1\cup X_2$ and $\textnormal{dist}(\alpha,\alpha')=t$ for some $t\geq 0$. Then we have $$\ell(\alpha)=\ell(\alpha')\cosh t\text{ and } \min\{\textnormal{Area}(X_1),\textnormal{Area}(X_2)\}=2\pi\pm\ell(\alpha')\sinh t.$$
Since $\ell(\alpha^\prime)\geq 2\pi H(X)$, it follows that
\begin{align}\label{mt3-e-2}
\mathcal{H}(\alpha)&=\frac{\ell(\alpha)}{\min\{\textnormal{Area}(X_1),\textnormal{Area}(X_2)\}}\\
&\geq\frac{\ell(\alpha')\cosh t}{2\pi+\ell(\alpha')\sinh t}\geq\frac{H(X)\cosh t}{1+H(X)\sinh t}.\nonumber
\end{align}
Denote function $\phi(s)=\frac{H(X)\cosh s}{1+H(X)\sinh s}$. A simple computation shows that
$\phi'(s)=\frac{H(X)(\sinh s-H(X))}{\left(1+H(X)\sinh s\right)^2}$ which in particular implies that
\begin{align}\label{mt3-e-3}
\phi(t)\geq\phi(\arcsinh (H(X)))=\frac{H(X)}{\sqrt{1+H(X)^2}}.
\end{align}
From \eqref{mt3-e-1}, \eqref{mt3-e-2} and \eqref{mt3-e-3}, we have
\begin{align*}
\frac{H(X)}{\sqrt{1+H(X)^2}}\leq\mathcal{H}(\alpha)\leq\frac{C}{\sqrt{1+C^2}},
\end{align*}
which implies that $H(X)\leq C$. The proof is complete.
\end{proof}
Now we turn to prove Theorem \ref{mt-3}.
\begin{proof}[Proof of Theorem \ref{mt-3}]
Since $0<C<\frac{\log 2}{\sqrt{4\pi(\log 2+\pi)}}$, one may take $\epsilon>0$ such that
$$\frac{\log 2-2\pi\epsilon}{2\pi(1-\epsilon)+\log 2}>\frac{C}{\sqrt{1+C^2}}.$$
First it follows from Lemma \ref{l-h1}, Lemma \ref{l-describe} and Theorem \ref{t-H} that
\begin{align}\label{mt3-e-4}
&\ \ \ \ \limsup\limits_{g\to\infty}\Prob\left(X\in\mathcal{M}_{g,n(g)};\ h(X)\leq\frac{C}{\sqrt{1+C^2}}\right)\\
&\leq\limsup\limits_{g\to\infty}\Prob\left(X\in\mathcal{M}_{g,n(g)};\ X\in\mathcal{A}_{g,n(g)}\left(\frac{\log 2}{2\pi}-\epsilon\right)\right)\nonumber\\
&+\limsup\limits_{g\to\infty}\Prob\left(X\in\mathcal{M}_{g,n(g)};\ H(X)\leq C\right)\nonumber\\
&=1-e^{-\lambda(a,C)}.\nonumber
\end{align}
For the other direction, if $X\in\mathcal{B}_{g,n(g)}(C)$, then there exists a $\beta'\in\mathcal{SG}_1(X)$ such that $\mathcal{H}(\beta')=\frac{\ell(\beta')}{2\pi}\leq C$. Assume $X\setminus\beta'=X_1\cup X_2$, where $X_2\cong S_{g,n(g)-1}$.
Consider the curve $\beta\subseteq X_2$ which is a neighboring curve at constant distance $\arcsinh (\mathcal{H}(\beta'))$ from $\beta'$. Since $\mathcal{H}(\beta')\leq C$, $\ell(\beta')\leq 2\pi C<\log 2$. So we have 
$$\sinh\left(\frac{\ell(\beta^\prime)}{2}\right)<\sinh \frac{\log 2}{2}=\frac{1}{2}\left(\sqrt 2-\frac{1}{\sqrt{2}}\right)<1$$
implying that
\[\mathcal{H}(\beta')\leq  C<\frac{\log 2}{2\pi}<1<\frac{1}{\sinh \left(\frac{\ell(\beta')}{2} \right)}.\]
This together with the classical Collar Lemma (see \eg  \cite[Theorem 4.1.1]{Buser-book}) implies that $\beta$ is a simple closed curve contained in the standard collar of $\beta'$. Moreover, $$h(X)\leq\mathcal{H}(\beta)=\frac{\ell(\beta')\cdot\cosh(\arcsinh (\mathcal{H}(\beta')))}{2\pi+\ell(\beta')\cdot\sinh(\arcsinh (\mathcal{H}(\beta')))}=\frac{\mathcal{H}(\beta')}{\sqrt{1+\mathcal{H}(\beta')^2}}\leq\frac{C}{\sqrt{1+C^2}}.$$
Then we have
$$\mathcal{B}_{g,n(g)}(C)\subseteq \left\{X\in\mathcal{M}_{g,n(g)};\ h(X)\leq\frac{C}{\sqrt{1+C^2}}\right\},$$
which together with Lemma \ref{l-h2} implies that
\begin{align}\label{mt3-e-5}
&\ \ \ \ \liminf\limits_{g\to\infty}\Prob\left(X\in\mathcal{M}_{g,n(g)};\ h(X)\leq\frac{C}{\sqrt{1+C^2}}\right)\\
&\geq \liminf\limits_{g\to\infty}\Prob\left(X\in\mathcal{M}_{g,n(g)};\ X\in\mathcal{B}_{g,n(g)}\right)\nonumber\\
&=1-e^{-\lambda(a,C)}.\nonumber
\end{align}
Then the conclusion follows from \eqref{mt3-e-4} and \eqref{mt3-e-5}.
\end{proof}
\subsection{The case that $n(g)$ grows faster than $\sqrt g$ and slower than $g$}
In this subsection we finish the proof of Theorem \ref{mt-1}. Recall that for any $X\in\mathcal{M}_{g,n(g)}$ and $L>0$,
$$\mathcal{N}_{(0,3)}^{(g,n(g)-1)}(X,L)=\left\{\begin{matrix}&\text{simple closed geodesic }\alpha \text{ in }X\text{ such that }\\ &X\setminus\alpha\cong S_{0,3}\cup S_{g,n(g)-1}\text{ and }\ell_{\alpha}(X)\leq L\end{matrix}\right\}$$
and
$$N_{(0,3)}^{(g,n(g)-1)}(X,L)=\#\mathcal{N}_{(0,3)}^{(g,n(g)-1)}(X,L).$$
Define a function $L:\mathbb{N}\to\mathbb{R}$ as $$L(g)=\left(\frac{\sqrt{g}}{n(g)}\right)^{\frac{1}{2}}\text{ for all }g\in\mathbb{N}.$$

The following proposition is essential to Theorem \ref{mt-1}.
\begin{proposition}\label{ph-1}
Assume $n(g)$ satisfies $\lim\limits_{g\to\infty}\frac{n(g)}{\sqrt g}=\infty$. Then we have
$$\liminf\limits_{g\to\infty}\Prob\left(X\in\mathcal{M}_{g,n(g)};\ N_{(0,3)}^{(g,n(g)-1)}(X,L(g))\geq 1\right)\geq\liminf\limits_{g\to\infty}\frac{V_{g,n(g)-1}^2}{V_{g,n(g)}V_{g,n(g)-2}}.$$
\end{proposition}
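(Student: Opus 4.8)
The plan is to apply the second moment method, i.e.\ Lemma \ref{l-cs}, to the random variable
\[
N\overset{\text{def}}{=}N_{(0,3)}^{(g,n(g)-1)}(X,L(g)),
\]
which counts the short simple closed geodesics cutting off a thrice-punctured sphere that bounds two of the $n(g)$ cusps. Lemma \ref{l-cs} gives
\[
\Prob\left(N\geq 1\right)\geq \frac{\left(\E[N]\right)^2}{\E[N^2]}=\frac{\left(\E[N]\right)^2}{\E[(N)_2]+\E[N]},
\]
where I have used $\E[N^2]=\E[(N)_2]+\E[N]$ with $(N)_2=N(N-1)$. Thus the whole argument reduces to computing the first two factorial moments of $N$ and controlling their ratio as $g\to\infty$.

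First I would record that since $\lim_{g\to\infty}\frac{n(g)}{\sqrt g}=\infty$, the chosen length satisfies $L(g)=\left(\frac{\sqrt g}{n(g)}\right)^{1/2}\to 0$; in particular Lemma \ref{l-exp-s} applies, and crucially it does \emph{not} require $n(g)\prec\sqrt g$, which would fail in this regime. Taking $k=1$ and $k=2$ in Lemma \ref{l-exp-s} (with $n=n(g)$) yields
\[
\E[N]=n(n-1)\cdot\frac{V_{g,n-1}}{V_{g,n}}\cdot\frac{L(g)^2}{4}\cdot\left(1+O(L(g))\right)
\]
and
\[
\E[(N)_2]=n(n-1)(n-2)(n-3)\cdot\frac{V_{g,n-2}}{V_{g,n}}\cdot\frac{L(g)^4}{16}\cdot\left(1+O(L(g))\right).
\]
Dividing, the length and combinatorial factors collapse to
\[
\frac{\left(\E[N]\right)^2}{\E[(N)_2]}=\frac{n(n-1)}{(n-2)(n-3)}\cdot\frac{V_{g,n-1}^2}{V_{g,n}V_{g,n-2}}\cdot\left(1+O(L(g))\right),
\]
whose prefactor $\frac{n(n-1)}{(n-2)(n-3)}\bigl(1+O(L(g))\bigr)$ tends to $1$ because $n(g)\to\infty$ and $L(g)\to 0$.

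The decisive point is that $\E[N]\to\infty$, which is exactly what the choice of $L(g)$ is engineered to guarantee. Using Part $(2)$ of Lemma \ref{l-mirz}, namely $\frac{V_{g,n-1}}{V_{g,n}}\asymp \frac{1}{2g-3+n}\asymp\frac{1}{g+n}$, together with $L(g)^2=\frac{\sqrt g}{n}$, one obtains $\E[N]\asymp \frac{n^2}{g+n}\cdot\frac{\sqrt g}{n}=\frac{n\sqrt g}{g+n}$, and a short case distinction ($n\leq g$ versus $n>g$) shows this tends to infinity precisely when $\frac{n}{\sqrt g}\to\infty$. The same Lemma \ref{l-mirz} also gives $R_g\overset{\text{def}}{=}\frac{V_{g,n-1}^2}{V_{g,n}V_{g,n-2}}\asymp 1$, so $R_g$ is bounded. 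Writing
\[
\Prob\left(N\geq 1\right)\geq\frac{1}{\dfrac{\E[(N)_2]}{\left(\E[N]\right)^2}+\dfrac{1}{\E[N]}},
\]
the term $\frac{1}{\E[N]}\to 0$, while $\frac{\E[(N)_2]}{\left(\E[N]\right)^2}$ equals $\frac{V_{g,n}V_{g,n-2}}{V_{g,n-1}^2}$ up to a factor tending to $1$. Since $R_g$ is bounded, these factors $(1+o(1))$ can be pulled through the $\liminf$, giving
\[
\liminf_{g\to\infty}\Prob\left(N\geq 1\right)\geq \liminf_{g\to\infty}\frac{V_{g,n-1}^2}{V_{g,n}V_{g,n-2}},
\]
as desired.

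The main obstacle, and the only genuinely delicate step, is verifying $\E[N]\to\infty$: this is where the balancing built into $L(g)=(\sqrt g/n(g))^{1/2}$ is essential, since one simultaneously needs $L(g)\to 0$ (so that Lemma \ref{l-exp-s} applies and the $O(L)$ errors vanish) and $\E[N]\to\infty$ (so that the additive $\E[N]$ in the denominator of the second moment bound becomes negligible). Everything else is bookkeeping with the factorial moment formula of Lemma \ref{l-exp-s} and the ratio asymptotics of Lemma \ref{l-mirz}; in particular, no information about the precise value of $\lim\frac{V_{g,n-1}^2}{V_{g,n}V_{g,n-2}}$ is required, which is exactly why the conclusion is stated as a $\liminf$ inequality and the evaluation of that ratio is left to Question \ref{que-wpv}.
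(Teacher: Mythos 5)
Your proof is correct and follows essentially the same route as the paper: the second moment method of Lemma \ref{l-cs} applied to $N_{(0,3)}^{(g,n(g)-1)}(X,L(g))$, with first and second factorial moments computed from Lemma \ref{l-exp-s} and the growth $\mathbb{E}[N]\asymp\frac{n\sqrt g}{g+n}\to\infty$ extracted via Part $(2)$ of Lemma \ref{l-mirz}. The only cosmetic difference is that the paper bounds the discrepancy $\bigl|\mathbb{E}[N]^2/\mathbb{E}[N^2]-\mathbb{E}[N]^2/\mathbb{E}[(N)_2]\bigr|\asymp\frac{g+n}{\sqrt g\,n}$ explicitly, while you absorb the additive $\mathbb{E}[N]$ term using the boundedness $\frac{V_{g,n-1}^2}{V_{g,n}V_{g,n-2}}\asymp 1$ — the same computation rearranged.
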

\begin{proof}
Notice that $\left(N_{(0,3)}^{(g,n(g)-1)}(X,L(g))\right)^2$ is the number of pairs $(\alpha,\beta)$, where $\alpha,\beta\in\mathcal{N}_{(0,3)}^{(g,n-1)}(X,L(g))$.
From the Collar Lemma (see \eg \cite[Theorem 4.1.6]{Buser-book}), if $L<2\arcsinh 1$, then for any such pair $(\alpha,\beta)$, either $\alpha=\beta$ or $\alpha$ and $\beta$ have no intersection. Hence for $g>0$ large enough we have
\begin{align}\label{exp-e1}
&\ \ \ \ \mathbb{E}_{\textnormal{WP}}^g\left[\left(N_{(0,3)}^{(g,n(g)-1)}(X,L(g))\right)^2\right]\\
&=\mathbb{E}_{\textnormal{WP}}^g\left[N_{(0,3)}^{(g,n(g)-1)}(X,L(g))\right]
+\mathbb{E}_{\textnormal{WP}}^g\left[N_{(0,3),2}^{(g,n(g)-2)}(X,L(g))\right].\nonumber
\end{align}
It follows from Lemma \ref{l-exp-s} that
\begin{align}\label{exp-e2-1}
\mathbb{E}_{\textnormal{WP}}^g\left[N_{(0,3)}^{(g,n(g)-1)}(X,L(g))\right]&\sim n(g)(n(g)-1)\cdot\frac{V_{g,n(g)-1}}{V_{g,n(g)}}\cdot \frac{L(g)^2}{4}\\
&\sim \frac{V_{g,n(g)-1}}{V_{g,n(g)}}\cdot\frac{n(g)^2L(g)^2}{4}.\nonumber
\end{align}
This together with Part $(2)$ of Lemma \ref{l-mirz} implies that
\begin{align}\label{exp-e2}
\mathbb{E}_{\textnormal{WP}}^g\left[N_{(0,3)}^{(g,n(g)-1)}(X,L(g))\right]\asymp\frac{\sqrt{g}\cdot n(g)}{g+n(g)}.
\end{align}
Similarly, it follows from Lemma \ref{l-exp-s} and Part $(2)$ of Lemma \ref{l-mirz} that
\begin{align}\label{exp-e3-1}
\mathbb{E}_{\textnormal{WP}}^g\left[N_{(0,3),2}^{(g,n(g)-2)}(X,L(g))\right]
&\sim\prod\limits_{i=0}^3(n(g)-i)\cdot\frac{V_{g,n(g)-2}}{V_{g,n}}\cdot\frac{L(g)^4}{16}\\
&\sim\frac{V_{g,n(g)-2}}{V_{g,n(g)}}\cdot\frac{n(g)^4L(g)^4}{16}\nonumber
\end{align}
and
\begin{align}\label{exp-e3}
\mathbb{E}_{\textnormal{WP}}^g\left[N_{(0,3),2}^{(g,n(g)-2)}(X,L(g))\right]\asymp \left(\frac{\sqrt{g}\cdot n(g)}{g+n(g)}\right)^2.
\end{align}
By our assumption that $\lim\limits_{g\to\infty}\frac{n(g)}{\sqrt g}=\infty$, it is clear that $\lim\limits_{g\to \infty}\frac{\sqrt{g}\cdot n(g)}{g+n(g)}=\infty$. Then it follows from \eqref{exp-e1}, \eqref{exp-e2} and \eqref{exp-e3} that
$$\mathbb{E}_{\textnormal{WP}}^g\left[\left(N_{(0,3)}^{(g,n(g)-1)}(X,L(g))\right)^2\right]\asymp\left(\frac{\sqrt{g}\cdot n(g)}{g+n(g)}\right)^2$$
and
\begin{align}\label{exp-e4}
&\ \ \ \ \left|\frac{\mathbb{E}_{\textnormal{WP}}^g\left[N_{(0,3)}^{(g,n(g)-1)}(X,L(g))\right]^2}{\mathbb{E}_{\textnormal{WP}}^g\left[\left(N_{(0,3)}^{(g,n(g)-1)}(X,L(g))\right)^2\right]
}-\frac{\mathbb{E}_{\textnormal{WP}}^g\left[N_{(0,3)}^{(g,n(g)-1)}(X,L(g))\right]^2}{\mathbb{E}_{\textnormal{WP}}^g\left[N_{(0,3),2}^{(g,n(g)-2)}(X,L(g))\right]}\right|\\
&=\frac{\mathbb{E}_{\textnormal{WP}}^g\left[N_{(0,3)}^{(g,n(g)-1)}(X,L(g))\right]^3}{\mathbb{E}_{\textnormal{WP}}^g\left[\left(N_{(0,3)}^{(g,n(g)-1)}(X,L(g))\right)^2\right]\cdot\mathbb{E}_{\textnormal{WP}}^g\left[N_{(0,3),2}^{(g,n(g)-2)}(X,L(g))\right]}\nonumber\\
&\asymp \frac{g+n(g)}{\sqrt{g}\cdot n(g)}.\nonumber
\end{align}
Now we apply Lemma \ref{l-cs} to random variable $N_{(0,3)}^{g,n(g)-1}(X,L(g))$. This together with \eqref{exp-e4} implies that
\begin{align}\label{e-pf3}
&\ \ \ \Prob\left(X\in\mathcal{M}_{g,n(g)};N_{(0,3)}^{(g,n(g)-1)}(X,L(g))\geq 1\right)\\
&\geq \frac{\mathbb{E}_{\textnormal{WP}}^g\left[N_{(0,3)}^{(g,n(g)-1)}(X,L(g))\right]^2}{\mathbb{E}_{\textnormal{WP}}^g\left[\left(N_{(0,3)}^{(g,n(g)-1)}(X,L(g))\right)^2\right]}\nonumber\\
&=\frac{\mathbb{E}_{\textnormal{WP}}^g\left[N_{(0,3)}^{(g,n(g)-1)}(X,L(g))\right]^2}{\mathbb{E}_{\textnormal{WP}}^g\left[N_{(0,3),2}^{(g,n(g)-2)}(X,L(g))\right]}-O\left( \frac{g+n(g)}{\sqrt{g}\cdot n(g)}\right),\nonumber
\end{align}
where the implied constant is independent of $g$. From  \eqref{exp-e2-1} and \eqref{exp-e3-1} we have
\begin{align}\label{e-pf-4-1}
&\frac{\mathbb{E}_{\textnormal{WP}}^g\left[N_{(0,3)}^{(g,n(g)-1)}(X,L(g))\right]^2}{\mathbb{E}_{\textnormal{WP}}^g\left[N_{(0,3),2}^{(g,n(g)-2)}(X,L(g))\right]}\\
&\sim\left(\frac{V_{g,n(g)-1}}{V_{g,n(g)}}\cdot\frac{n(g)^2L(g)^2}{4}\right)^2\cdot\frac{V_{g,n(g)}}{V_{g,n(g)-2}}\cdot\frac{16}{n(g)^4L(g)^4}\nonumber\\
&=\frac{V_{g,n(g)-1}^2}{V_{g,n(g)}V_{g,n(g)-2}}.\nonumber
\end{align}
By taking infimum limits, the conclusion then follows from \eqref{e-pf3} and \eqref{e-pf-4-1}.

\end{proof}

Now we are ready to prove Theorem \ref{mt-1}.
\begin{proof}[Proof of Theorem \ref{mt-1}]
Since $n(g)=o(g)$,  Lemma \ref{l-cor-3} tells that
\[\lim\limits_{g\to\infty}\frac{V_{g,n(g)-1}^2}{V_{g,n(g)}V_{g,n(g)-2}}=1.\]
This together with Proposition \ref{ph-1} implies that
\begin{align*}
\lim\limits_{g\to\infty}\Prob\left(X\in\mathcal{M}_{g,n(g)};\ N_{(0,3)}^{(g,n(g)-1)}(X,L(g))\geq 1\right)=1.
\end{align*}

\noindent Let $X\in \sM_{g,n(g)}$ with $N_{(0,3)}^{(g,n(g)-1)}(X,L(g))\geq 1$. Then the Cheeger constant satisfies 
\[h(X)\leq \frac{L(g)}{2\pi}.\]
Hence we have that for any $\epsilon>0$
$$\lim\limits_{g\to\infty}\Prob\left(X\in\mathcal{M}_{g,n(g)};\ h(X)\leq\epsilon\right)=1.$$
Then the conclusion follows by applying Theorem \ref{exist} and \ref{t-Bu}.
\end{proof}

\begin{rem*}
Recall that $L(g)=\left(\frac{\sqrt{g}}{n(g)}\right)^{\frac{1}{2}}$. Indeed, the upper bound $\epsilon$ in Theorem \ref{mt-1} can be replaced by $ \left(\frac{\sqrt{g}}{n(g)}\right)^{\frac{1}{2}}$ up to some uniform constant multiplication, that tends to $0$ as $g\to \infty$.
\end{rem*}

\section{One open question on WP volume}\label{sec-ques}
It is worthwhile to know whether the condition that $\lim\limits_{g\to\infty}\frac{n(g)}{g}=0$ in Theorem \ref{mt-1} can be removed. Based on Proposition \ref{ph-1}, the proof of Theorem \ref{mt-3} tells that it suffices to solve the following question:
\begin{que*}[=Question \ref{que-wpv}]
Does the following limit hold:
$$\lim\limits_{g+n\to\infty}\frac{V_{g,n}^2}{V_{g,n-1}V_{g,n+1}}=1$$ where $n$ may depend on $g$.
\end{que*}

In this section we prove the following result which partially answers Question \ref{que-wpv}.
\begin{proposition}\label{p-est}Let $n>0$ which may depend on the genus $g$. Then
\[\frac{1}{2}-\frac{\pi^2}{20}\leq \liminf\limits_{g+n\to\infty}\frac{V_{g,n}^2}{V_{g,n-1}V_{g,n+1}}\leq\limsup\limits_{g+n\to\infty}\frac{V_{g,n}^2}{V_{g,n-1}V_{g,n+1}}\leq 1.\]
\end{proposition}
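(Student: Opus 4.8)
The plan is to pass to the renormalized ratio $b_{g,n}:=\dfrac{(2g-2+n)V_{g,n}}{V_{g,n+1}}$ studied by Mirzakhani--Zograf in Lemma~\ref{l-MZ}. A direct computation gives the exact identity
\[
\frac{V_{g,n}^2}{V_{g,n-1}V_{g,n+1}}=\frac{2g-3+n}{2g-2+n}\cdot\frac{b_{g,n}}{b_{g,n-1}},
\]
whose prefactor tends to $1$ as $g+n\to\infty$, so everything reduces to two-sided control of the consecutive quotient $b_{g,n}/b_{g,n-1}$. When $n=o(g)$ one has $b_{g,n}\to\frac{1}{4\pi^2}$ by Lemma~\ref{l-MZ} (recovering Lemma~\ref{l-cor-3}), and for bounded $g$ the classical Manin--Zograf asymptotics again force the quotient to $1$; thus the entire difficulty lies in the regime $n\succ g$, where Lemma~\ref{l-MZ} and the estimates \eqref{e-ap-2}, Lemmas~\ref{l-ap}--\ref{l-int} all degenerate.

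My main tool would be the dilaton and string equations for Weil--Petersson volumes (Do's identities, which may also be extracted from the recursion of Subsection~\ref{sub-e}), specialized at $L=2\pi i$ to the polynomial $V_{g,n+1}(0,\dots,0,L)=\sum_{d\ge0}[\tau_0^{n}\tau_d]_{g,n+1}\frac{(L/2)^{2d}}{(2d+1)!}$. Writing $p_d:=p_d^{(g,n)}=\dfrac{[\tau_0^{n}\tau_d]_{g,n+1}}{V_{g,n+1}}\in[0,1]$ with $p_0=1$, these identities become
\[
\sum_{d\ge1}c_d\,p_d=4\pi^2 b_{g,n},\qquad \sum_{d\ge1}\tilde c_d\,p_d=1,
\]
where $c_d=\frac{2d(-1)^{d+1}\pi^{2d}}{(2d+1)!}$ and $\tilde c_d=\frac{(-1)^{d+1}\pi^{2d}}{(2d+1)!}=\frac{c_d}{2d}$. (On $V_{0,4}(0,0,0,L)=\tfrac12(4\pi^2+L^2)$ one checks $p_1=6/\pi^2$, and both relations hold; the string relation rests on $V_{g,n+1}(\mathbf 0,2\pi i)=0$.) The decisive structural input is the monotonicity $1=p_0\ge p_1\ge p_2\ge\cdots\ge0$ of the normalized intersection numbers, which I would extract from the recursion of Subsection~\ref{sub-e} in the spirit of \eqref{e-ap-2} and Lemma~\ref{l-ai}.

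Granting monotonicity, Abel summation rewrites the two relations as nonnegatively weighted combinations $\sum_d C_d w_d$ and $\sum_d\tilde C_d w_d$ of the partial sums $C_d=\sum_{k\le d}c_k$, $\tilde C_d=\sum_{k\le d}\tilde c_k$, with weights $w_d=p_d-p_{d+1}\ge0$ satisfying $\sum_d w_d=p_1$. An elementary numerical check gives $\max_d C_d=C_1=\frac{\pi^2}{3}$, $\min_d C_d=C_2=\frac{\pi^2}{3}-\frac{\pi^4}{30}>0$, and $\max_d\tilde C_d=\tilde C_1=\frac{\pi^2}{6}$. The string relation then yields $1\le\tilde C_1\,p_1$, i.e. $p_1\ge 6/\pi^2>\tfrac12$, while the dilaton relation yields $C_2\,p_1\le 4\pi^2 b_{g,n}\le C_1\,p_1$. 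Feeding the lower bound for $b_{g,n}$ and the upper bound for $b_{g,n-1}$ into the displayed identity gives
\[
\frac{b_{g,n}}{b_{g,n-1}}\ge\frac{C_2\,p_1^{(g,n)}}{C_1\,p_1^{(g,n-1)}}\ge\frac{C_2}{2C_1}=\frac12-\frac{\pi^2}{20},
\]
using $p_1^{(g,n)}\ge\tfrac12$ and $p_1^{(g,n-1)}\le1$; this is exactly the asserted lower bound, and the clean constant reflects the choice $p_1\ge\tfrac12$ rather than the sharper $6/\pi^2$.

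For the upper bound $\limsup\le1$ the same identities reduce matters to showing that $b_{g,n}$ is \emph{asymptotically non-increasing} in $n$, i.e. $b_{g,n}\le(1+o(1))\,b_{g,n-1}$; via Abel summation this amounts to comparing the increment profiles $w_d^{(g,n)}$ and $w_d^{(g,n-1)}$ against the positive weights $C_d$, which should follow once one knows that the profile $p_d^{(g,n)}$ shifts toward small $d$ as the number of $\tau_0$-insertions grows. \textbf{The main obstacle} is precisely this uniform control of the intersection-number profile in the regime $n\succ g$: every quantitative estimate in the excerpt (Lemmas~\ref{l-MZ}, \ref{l-ap}, \ref{l-int}) is proved under $n\prec\sqrt g$ and collapses here, so both the monotonicity in $d$ and the comparison across $n$ must be re-established directly from the recursion. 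This degeneration is exactly why one obtains only the two-sided bound above rather than the conjectured limit $1$.
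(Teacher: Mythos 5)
Your lower bound is correct, and after unwinding the notation it is essentially the paper's own argument. Your ``dilaton relation'' $\sum_{d\ge1}c_d\,p_d=4\pi^2 b_{g,n}$ is exactly the recursion the paper quotes from \cite[Equation II on Page 276]{Mirz13}: with the paper's $b_m=\frac{m\pi^{2m-2}}{(2m+1)!}$ and $c_m(g,n)=p_m^{(g,n)}$ one has $c_d=4\pi^2\cdot\frac12(-1)^{d-1}b_d$, and your Abel summation against the partial sums $C_d$, with $\max_d C_d=C_1=\frac{\pi^2}{3}$ and $\min_d C_d=C_2=\frac{\pi^2}{3}-\frac{\pi^4}{30}$, reproduces verbatim the paper's two-sided bound $\left(\frac{1}{12}-\frac{\pi^2}{120}\right)c_1(g,n)\le b_{g,n}\le\frac{c_1(g,n)}{12}$ and hence the constant $\frac{C_2}{2C_1}=\frac12-\frac{\pi^2}{20}$. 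The monotonicity $1=p_0\ge p_1\ge\cdots\ge0$ that you propose to ``re-establish from the recursion'' is a known fact the paper simply cites (\cite[Theorem 6.1]{Mirz07a} and \cite[Lemma 3.2]{Mirz13}), so no new work is needed there. Your one genuine addition is the derivation of $p_1\ge 6/\pi^2$ from the string identity $V_{g,n+1}(\mathbf{0},2\pi i)=0$; this is correct and mildly sharper than the bound $c_1(g,n)\ge\frac12$ the paper imports from \cite[Page 279]{Mirz13}, though it does not change the stated constant.

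The upper bound, however, contains a genuine gap, and it is the harder half of the proposition. Your reduction of $\limsup\le1$ to ``$b_{g,n}$ is asymptotically non-increasing in $n$'' is circular: by your own identity $\frac{V_{g,n}^2}{V_{g,n-1}V_{g,n+1}}=\frac{2g-3+n}{2g-2+n}\cdot\frac{b_{g,n}}{b_{g,n-1}}$, the claim $b_{g,n}\le(1+o(1))\,b_{g,n-1}$ \emph{is} the statement to be proved, and the comparison of the increment profiles $w_d^{(g,n)}$ and $w_d^{(g,n-1)}$ is precisely what the recursion does not deliver once $n$ is comparable to or larger than $g$ (note the difficulty already sets in at $n\asymp g$, where Lemma \ref{l-MZ} gives only $O(1)$ control, not merely at $n\succ g$). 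The missing idea, which is how the paper proceeds, is to abandon intersection numbers entirely for this half and argue probabilistically: for $n\le g^{3/4}$, Lemma \ref{l-MZ} pins both consecutive ratios to $\frac{1}{4\pi^2}$ within $O\left((g+n)^{-1/4}\right)$; for $n\ge g^{3/4}$, one sets $L=\frac{g^{1/8}}{n^{1/4}}\to0$, lets $N=N_{(0,3)}^{(g,n)}(X,L)$ count short geodesics cutting off a pair of cusps on a random $X\in\M_{g,n+1}$, and combines the trivial bound $\textnormal{Prob}_{\textnormal{WP}}^{g,n+1}(N\ge1)\le1$ with the second-moment inequality of Lemma \ref{l-cs} and the factorial-moment asymptotics of Lemma \ref{l-exp-s}, namely $\mathbb{E}_{\textnormal{WP}}^{g,n+1}[N]\sim\frac{V_{g,n}}{V_{g,n+1}}\cdot\frac{n^2L^2}{4}$ and $\mathbb{E}_{\textnormal{WP}}^{g,n+1}[(N)_2]\sim\frac{V_{g,n-1}}{V_{g,n+1}}\cdot\frac{n^4L^4}{16}$. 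This yields $\frac{V_{g,n}^2}{V_{g,n-1}V_{g,n+1}}\le(1+o(1))\left(1+\frac{V_{g,n}}{V_{g,n-1}}\cdot\frac{4}{n^2L^2}\right)$ with correction term $\asymp\frac{g+n}{n^{3/2}g^{1/4}}\to0$. This counting trick works uniformly exactly in the regime where every recursion-based estimate degenerates; without it, your proposal establishes only the left-hand inequality of the proposition.
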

\begin{proof}
We first show the LHS inequality. Recall that it is known by the Recursive Formula (see \eg \cite[Equation II on Page 276]{Mirz13}) that
\begin{align*}
\frac{(2g-2+n)V_{g,n}}{V_{g,n+1}}=\frac{1}{2}\sum\limits_{m=1}^{3g-2+n}(-1)^{m-1}b_mc_m(g,n)
\end{align*}
where for all $m\geq 1$,
$$b_m=\frac{m\cdot\pi^{2m-2}}{(2m+1)!}\text{\quad and \ }c_m(g,n)=\frac{[\tau_m\cdot\tau_0^n]_{g,n+1}}{V_{g,n+1}}.$$
It is clear that $\{b_m\}_{m\geq 0}\subset \R_{\geq 0}$ is decreasing. And it is known by \cite[Theorem 6.1]{Mirz07a} and \cite[Lemma 3.2]{Mirz13} that $\{c_m(g,n)\}_{m\geq 0}\subset \R_{\geq 0}$ is also decreasing. In particular we have
\[c_1(g,n)\leq c_0(g,n)=1.\]
Denote by
$$S_k=\sum\limits_{l=1}^{k-1}(-1)^{l-1}c_l(g,n).$$
Then we have
\begin{enumerate}
\item $S_1=0,\ S_2=c_1(g,n)$.
\item $c_1(g,n)-c_2(g,n)\leq S_k\leq c_1(g,n)\ (k\geq 2)$.
\end{enumerate}
   Apply Abel transformation, we have
\begin{align*}
&\sum\limits_{m=1}^{3g-2+n}(-1)^{m-1}b_mc_m(g,n)=\sum\limits_{m=1}^{3g-2+n}b_m(S_{m+1}-S_m)\\
&=\sum\limits_{m=1}^{3g-2+n}S_{m+1}(b_m-b_{m+1})+b_{3g-1+n}S_{3g-1+n}\\
&\leq c_1(g,n)\cdot \left(\sum\limits_{m=1}^{3g-2+n}(b_m-b_{m+1})\right)+b_{3g-1+n}S_{3g-1+n}\\
&< \frac{c_1(g,n)}{6}+b_{3g-1+n}\cdot c_1(g,n) \quad (\text{because $b_1=\frac{1}{6}$})
\end{align*}
which implies that
\begin{align}\label{e-sup}
\limsup\limits_{g+n\to\infty}\frac{(2g-2+n)V_{g,n}}{V_{g,n+1}}\leq\limsup\limits_{g+n\to\infty}\frac{c_1(g,n)}{12}\leq\frac{1}{12}.
\end{align}
We also have
\begin{align*}
&\sum\limits_{m=1}^{3g-2+n}(-1)^{m-1}b_mc_m(g,n)=\sum\limits_{m=1}^{3g-2+n}S_{m+1}(b_m-b_{m+1})+b_{3g-1+n}S_{3g-1+n}\\
&\geq c_1(g,n)(b_1-b_2)+(c_1(g,n)-c_2(g,n))\cdot \left(\sum\limits_{m=2}^{3g-2+n}(b_m-b_{m+1})\right)\\
&+b_{3g-1+n}\cdot (c_1(g,n)-c_2(g,n))=\frac{1}{6}c_1(g,n)-\frac{\pi^2}{60}c_2(g,n)\geq \left(\frac{1}{6}-\frac{\pi^2}{60} \right)c_1(g,n)
\end{align*}
which implies
\begin{align}\label{e-inf}
\liminf\limits_{g+n\to\infty}\frac{(2g-2+n)V_{g,n}}{V_{g,n+1}}\geq\liminf\limits_{g+n\to\infty}\left(\frac{1}{12}-\frac{\pi^2}{120}\right)c_1(g,n)\geq\frac{1}{24}-\frac{\pi^2}{240},
\end{align}
where in the second inequality we apply one property in \cite[Page 279]{Mirz13} saying that
$$c_1(g,n)=\frac{\left[\tau_1\tau_0^{n-1}\right]_{g,n}}{V_{g,n}}\geq\frac{1}{2}.$$
Then the LHS inequality clearly follows from \eqref{e-sup} and \eqref{e-inf}.\\

Now we prove the RHS inequality. Write $g+n=M$ for simplicity. 

\underline{Case I: $n\leq g^{\frac{3}{4}}$}. From Lemma \ref{l-MZ}, we have that for $M$ large enough,
\begin{align*}
\left|\frac{(2g-2+n)V_{g,n}}{V_{g,n+1}}-\frac{1}{4\pi^2}\right|\leq\frac{c_2}{g^{\frac{1}{4}}}\leq\frac{2c_2}{M^{\frac{1}{4}}}
\end{align*}
and
$$\left|\frac{(2g-3+n)V_{g,n-1}}{V_{g,n}}-\frac{1}{4\pi^2}\right|\leq\frac{2c_2}{M^{\frac{1}{4}}}.$$
Combine these two inequalities above, one may conclude that for $M$ large enough,
\begin{align}\label{e-sup-1}
\frac{V_{g,n}^2}{V_{g,n-1}V_{g,n+1}}\leq\frac{M^{\frac{1}{4}}+8\pi^2c_2}{M^{\frac{1}{4}}-8\pi^2c_2}.
\end{align}
By letting $M\to \infty$, this implies the RHS inequality in this first case.

\underline{Case II: $n\geq g^\frac{3}{4}$}. Set $L=\frac{g^{\frac{1}{8}}}{n^{\frac{1}{4}}}.$
In this case it is clear that \[\lim \limits_{g+n\to \infty}L=0.\]
It is not hard to see that
\begin{align}\label{e-sup-3}
n \succ M^{\frac{3}{4}},\ L\prec \left(\frac{1}{M}\right)^{\frac{1}{16}}\text{ \  and \ }\frac{M}{n^{\frac{3}{2}}g^{\frac{1}{4}}}\prec \left(\frac{1}{M}\right)^{\frac{1}{4}}.
\end{align}
For large enough $(g+n)$, the proof of Equation \eqref{exp-e1} yields that
$$\mathbb{E}_{\textnormal{WP}}^{g,n+1}\left[\left(N_{(0,3)}^{(g,n)}(X,L)\right)^2\right]=\mathbb{E}_{\textnormal{WP}}^{g,n+1}\left[N_{(0,3)}^{(g,n)}(X,L)\right]+\mathbb{E}_{\textnormal{WP}}^{g,n+1}\left[N_{(0,3),2}^{(g,n-1)}(X,L)\right].$$
Together with Lemma \ref{l-cs}, it follows that
\begin{align}\label{e-sup-4}
1&\geq\textnormal{Prob}_{\textnormal{WP}}^{g,n+1}\left(X\in\mathcal{M}_{g,n+1};\ N_{(0,3)}^{(g,n-1)}(X,L)\geq 1\right)\\
&\geq\frac{\mathbb{E}_{\textnormal{WP}}^{g,n+1}\left[N_{(0,3)}^{(g,n)}(X,L)\right]^2}{\mathbb{E}_{\textnormal{WP}}^{g,n+1}\left[\left(N_{(0,3)}^{(g,n)}(X,L)\right)^2\right]}\nonumber\\
&\geq\frac{\mathbb{E}_{\textnormal{WP}}^{g,n+1}\left[N_{(0,3)}^{(g,n)}(X,L)\right]^2}{\mathbb{E}_{\textnormal{WP}}^{g,n+1}\left[N_{(0,3)}^{(g,n)}(X,L)\right]+\mathbb{E}_{\textnormal{WP}}^{g,n+1}\left[N_{(0,3),2}^{(g,n-1)}(X,L)\right]}.\nonumber
\end{align}
From \eqref{e-sup-3} and Lemma \ref{l-exp-s}, we have
\begin{align}\label{e-sup-5}
\mathbb{E}_{\textnormal{WP}}^{g,n+1}\left[N_{(0,3)}^{(g,n)}(X,L)\right]&=n(n+1)\cdot\frac{V_{g,n}}{V_{g,n+1}}\cdot \frac{L^2}{4}(1+O(L))\\
&= \frac{V_{g,n}}{V_{g,n+1}}\cdot\frac{n^2L^2}{4}\cdot \left(1+A\right)\nonumber
\end{align}
and
\begin{align}\label{e-sup-6}
\mathbb{E}_{\textnormal{WP}}^{g,n+1}\left[N_{(0,3),2}^{(g,n-1)}(X,L)\right]
&=\prod\limits_{i=0}^3 (n+1-i)\cdot\frac{V_{g,n-1}}{V_{g,n+1}}\cdot\frac{L^4}{16}(1+O(L))\\
&=\frac{V_{g,n-1}}{V_{g,n+1}}\cdot\frac{n^4L^4}{16}\cdot \left(1+B\right),\nonumber
\end{align}
where $A,B=O\left(\frac{1}{M^{\frac{1}{16}}}\right)$ are positive and the implied constants are independent of $g$ and $n$. Then it follows from \eqref{e-sup-4}, \eqref{e-sup-5} and \eqref{e-sup-6} that 
\begin{align}\label{e-sup-8}
1 &\geq \frac{\left(\frac{V_{g,n}}{V_{g,n+1}}\cdot\frac{n^2L^2}{4}\right)^2\left(1+A\right)^2}{\left(\frac{V_{g,n-1}}{V_{g,n+1}}\cdot\frac{n^4L^4}{16}+\frac{V_{g,n}}{V_{g,n+1}}\cdot\frac{n^2L^2}{4}\right)\left(1+\max\{A,B\}\right)}\\
&\geq\frac{\frac{V_{g,n}^2}{V_{g,n-1}V_{g,n+1}}}{1+\frac{V_{g,n}}{V_{g,n-1}}\cdot\frac{4}{n^2L^2}}\cdot\frac{1}{1+\max\{A,B\}}.\nonumber
\end{align} 
Hence
\begin{align}\label{e-sup-7}
    \frac{V_{g,n}^2}{V_{g,n-1}V_{g,n+1}}\leq \left(1+\frac{V_{g,n}}{V_{g,n-1}}\cdot\frac{4}{n^2L^2}\right)\cdot\left(1+O\left(\frac{1}{M^{\frac{1}{16}}}\right)\right).
\end{align}
From \eqref{e-sup-3} and Part (2) of Lemma \ref{l-mirz}, one may conclude that
\begin{align*}
\frac{V_{g,n}}{V_{g,n-1}}\cdot\frac{4}{n^2L^2}\asymp\frac{M}{n^{\frac{3}{2}}g^{\frac{1}{4}}}\prec \frac{1}{M^{\frac{1}{4}}}.
\end{align*}
 Together with \eqref{e-sup-7}, one may conclude for $M$ large enough,
\begin{align}\label{e-sup-2}
\frac{V_{g,n}^2}{V_{g,n-1}V_{g,n+1}}\leq \left(1+O\left(\frac{1}{M^{\frac{1}{16}}}\right)\right)\cdot \left(1+O\left(\frac{1}{M^{\frac{1}{4}}}\right)\right).
\end{align}

\noindent By letting $M\to \infty$, the RHS inequality for this second case follows from \eqref{e-sup-2}.

The proof is complete.
\end{proof}
\vspace{.2in}

\subsection*{Conflict of interest} The authors have no Conflict of interest to declare that are relevant to the content of this
article.
\vspace{.1in}
\subsection*{Data Availability} The manuscript has no associated data.
\vspace{.2in}

\bibliographystyle{plain}
\bibliography{ref}
\end{document}